\numberwithin{equation}{section}
\newtheorem{Theorem}{Theorem}[section]
\newtheorem*{Theorem*}{Theorem}
\newtheorem{Corollary}[Theorem]{Corollary}
\newtheorem{Lemma}[Theorem]{Lemma}
\newtheorem{Proposition}[Theorem]{Proposition}
 { \theoremstyle{definition}
\newtheorem{Definition}[Theorem]{Definition}

\newtheorem{Remark}[Theorem]{Remark} }
\newcommand{\leaderfill}{\leaders\hbox to 1em{\hss.\hss}\hfill}
\newcommand{\RR}{\mathbb{R}}
\newcommand{\CC}{\mathbb{C}}
\newcommand{\ZZ}{\mathbb{Z}}
\newcommand{\NN}{\mathbb{N}}
\newcommand{\A}{{\mathscr A}}
\newcommand{\aw}{\mathrm{AW}\!\left(3\right)}
\newcommand{\awc}[2]{\mathrm{AW}_{#1}\!\left(#2\right)}
\newcommand{\AW}[1]{\mathrm{AW}_{#1}}
\newcommand{\awr}{\mathscr{A}\!\!\mathscr{W}}
\newcommand{\bivariateawalgebra}{rank 2 Askey--Wilson algebra }
\newcommand{\chq}{\mathrm{cosh}_q}
\newcommand{\dyone}{\Delta\left(Y_{L}\right)}
\newcommand{\dyzero}{\Delta\left(Y_{K}\right)}
\newcommand{\inprod}[2]{\left\langle #1, #2 \right\rangle}
\newcommand{\half}{\frac{1}{2}}
\newcommand{\ls}{\lambda_s}
\newcommand{\psimn}[2]{\psi_{#1}^{#2}\otimes \psi_{#2}}
\newcommand{\qhyp}[5]{\,_{#1}\phi_{#2} \left[ \begin{matrix}
		#3 \\ #4 \end{matrix};#5 \right]}
\newcommand{\shq}{\mathrm{sinh}_q}
\newcommand{\slc}{\mathfrak{sl}(2,\CC)}
\newcommand{\subalign}[1]{%
	\vcenter{%
		\Let@ \restore@math@cr \default@tag
		\baselineskip\fontdimen10 \scriptfont\tw@
		\advance\baselineskip\fontdimen12 \scriptfont\tw@
		\lineskip\thr@@\fontdimen8 \scriptfont\thr@@
		\lineskiplimit\lineskip
		\ialign{\hfil$\m@th\scriptstyle##$&$\m@th\scriptstyle{}##$\hfil\crcr
			#1\crcr
		}%
	}%
}
\newcommand{\tensor}[1]{^{\otimes{#1}}}
\newcommand{\uq}{\mathcal{U}_q(\mathfrak{su}(2))}
\newcommand{\Uq}{\mathcal{U}_q}
\newcommand{\UqO}[1][q]{\mathcal{U}_{#1}\left(\Omega_0\right)}
\newcommand{\uqeen}{\mathcal{U}_q(\mathfrak{su}(1,\!1))}
\newcommand{\uql}[1][q]{\mathcal{U}_{#1}(\mathfrak{sl}(2,\CC))}
\newcommand{\vspan}[1]{\text{span}\{#1\}}
\newcommand{\yone}{Y_{L}\otimes 1}
\newcommand{\yzero}{1\otimes Y_{K}}
\begin{document}
\allowdisplaybreaks

\newcommand{\arXivNumber}{2206.03986}

\renewcommand{\PaperNumber}{008}

\FirstPageHeading

\ShortArticleName{An Askey--Wilson Algebra of Rank 2}

\ArticleName{An Askey--Wilson Algebra of Rank 2}

\Author{Wolter GROENEVELT and Carel WAGENAAR}

\AuthorNameForHeading{W.~Groenevelt and C.~Wagenaar}

\Address{Delft Institute of Applied Mathematics, Technische Universiteit Delft,\\
PO Box 5031, 2600 GA Delft, The Netherlands}

\Email{\href{mailto:w.g.m.groenevelt@tudelft.nl}{w.g.m.groenevelt@tudelft.nl}, \href{mailto:c.c.m.l.wagenaar@tudelft.nl}{c.c.m.l.wagenaar@tudelft.nl}}
\URLaddress{\url{https://fa.ewi.tudelft.nl/~groenevelt/}, \newline
\hspace*{10.5mm}\url{https://fa.ewi.tudelft.nl/~cwagenaar/}}

\ArticleDates{Received June 30, 2022, in final form February 15, 2023; Published online March 05, 2023}

\Abstract{An algebra is introduced which can be considered as a rank 2 extension of the Askey--Wilson algebra. Relations in this algebra are motivated by relations between coproducts of twisted primitive elements in the two-fold tensor product of the quantum algebra $\uql$. It is shown that bivariate $q$-Racah polynomials appear as overlap coefficients of eigenvectors of generators of the algebra. Furthermore, the corresponding $q$-difference operators are calculated using the defining relations of the algebra, showing that it encodes the bispectral properties of the bivariate $q$-Racah polynomials.}

\Keywords{Askey--Wilson algebra; $q$-Racah polynomials}

\Classification{20G42; 33D80}

\section{Introduction}	
In this paper, we study a rank 2 version of the Askey--Wilson algebra and its relation to bivariate Askey--Wilson polynomials. The Askey--Wilson algebra $\aw$ was introduced by Zhedanov~\cite{Zhedanov1991} to describe the algebraic structure underlying the Askey--Wilson polynomials \cite{AW1985}. The Askey--Wilson polynomials form an important family of orthogonal polynomials that are on top of the $q$-Askey scheme \cite{KLS} of families of ($q$-)hypergeometric orthogonal polynomials. Every family of polynomials $\{p_n(x)\}$ in this scheme is bispectral; $p_n(x)$ is an eigenfunction of a second order differential or ($q$-)difference operator in $x$, as well as an eigenfunction of a second order difference operator in the degree $n$ (the three-term recurrence relation). It is this bispectrality property of the Askey--Wilson polynomials that is encoded in the Askey--Wilson algebra.\looseness=1

Since its introduction, the Askey--Wilson algebra $\aw$ has appeared in many contexts. For example, $\aw$ is closely related to the rank 1 double affine Hecke algebra of type $\big(C^\vee,C\big)$, Leonard pairs and the $q$-Onsager algebra \cite{BVZ2017, Koornwinder2007, Koornwinder2008, Terwilliger2002, Terwilliger2013,Terwilliger2018}. We refer to \cite{CFGPRV2021} for an overview of the many interpretations of the Askey--Wilson algebra. There is also a close connection between $\aw$ and the centralizer of $\Uq(\slc)$ in $\Uq(\slc)^{\otimes3}$ \cite{CGVZ,GZ93,Huang2017}. In this interpretation, intermediate Casimir operators generate $\aw$, and this also explains the occurrence of $q$-Racah polynomials, which are essentially Askey--Wilson polynomials on a finite set, as Racah coefficients for $\Uq(\slc)$ \cite{Granovskii1993}.

The latter interpretation of $\aw$ in $\Uq(\slc)^{\otimes3}$ has led to the definition of an Askey--Wilson algebra of rank $n$ as the algebra generated by the intermediate Casimir operators in the $(n+2)$-fold tensor product $\Uq(\slc)^{\otimes(n+2)}$ \cite{CL, DeBie2021, DeBie2020, DeClercq19, Post2017}. Multivariate versions \cite{Gasper2005, Gasper2007} of Askey--Wilson polynomials, or more precisely of $q$-Racah polynomials, appear as overlap coefficients in this setting, which is similar to the rank~1 case. This follows from results in \cite{GIV}, where it is shown that the multivariate $q$-Racah polynomials arise as $3nj$-symbols for $(n+2)$-fold tensor product representations of $\Uq(\mathfrak{su}(1,1))$. In~\cite{Iliev2011}, Iliev obtained $q$-difference operators for the multivariate Askey--Wilson polynomials, and these $q$-difference operators lead to a realization of a higher rank Askey--Wilson algebra~\cite{DeBie2021}.

In this paper, we define an algebra $\AW{2}$, a rank 2 version of the Askey--Wilson algebra, as the algebra which is, roughly speaking, generated by $\aw\otimes\aw$ and $\aw$ with certain relations. This definition is motivated by the realization of $\aw$ as a subalgebra of $\Uq(\slc)$ from~\cite{Granovskii1993}: $\aw \subseteq \Uq(\slc)$ is generated by two algebra elements that are essentially Koornwinder's~\cite{Koornwinder1993} twisted primitive elements. In~\cite{Groenevelt2021}, it is shown that Iliev's $q$-difference operators for the multivariate Askey--Wilson polynomials can be obtained from coproducts of twisted primitive elements in discrete series representations of $\Uq(\mathfrak{su}(1,1))$. This suggests that the coproducts of twisted primitive elements generate a rank $n$ version $\AW{n}$ of the Askey--Wilson algebra in $\Uq(\slc)^{\otimes n}$. This construction is different from the construction of the rank~$n$ Askey--Wilson algebra $\mathrm{AW}(n+2)$ in~\cite{DeBie2020}, where a $(n+2)$-fold tensor product is used. An advantage of the construction of $\AW{n}$ is that it allows to use directly representations of $\Uq(\slc)$ to construct representations of a higher rank Askey--Wilson algebra. Furthermore, the explicit relations of the generators suggest how to define the higher rank Askey--Wilson algebra without reference to the larger algebra $\Uq(\slc)^{\otimes n}$.

From a viewpoint of special functions, the related multivariate Askey--Wilson or $q$-Racah polynomials also have different interpretations depending on the construction of the algebra. In connection with $\mathrm{AW}(n+2)$ the $n$-variate $q$-Racah polynomials are overlap coefficients between $(n+1)$-variate orthogonal polynomials (i.e., one more variable), namely $q$-Hahn and $q$-Jacobi polynomials, which arise as nested Clebsch-Gordan coefficients \cite{GIV}. An analogous result for~$\AW{n}$ is that the $n$-variate Askey--Wilson polynomials are overlap coefficients between $n$-variate orthogonal polynomials (i.e., the same number of variables), namely Al-Salam--Chihara polynomials in base $q$ and $q^{-1}$, see \cite{Groenevelt2021}, or, in the setting of $\Uq(\mathfrak{su}(2))$, the $n$-variate $q$-Racah polynomials are overlap coefficients for $n$-variate $q$-Krawtchouk type polynomials.\looseness=1

We consider the rank 2 case in this paper. We show how to obtain bivariate $q$-Racah polynomials and their bispectral properties from a finite-dimensional representation of our algebra $\AW{2}$, so that $\AW{2}$ encodes the bispectral properties of the bivariate $q$-Racah polyno\-mials.\looseness=1

The organization of the paper is as follows. In Section \ref{sec:aw}, we recall well-known results on Zhedanov's Askey--Wilson algebra $\aw$ that will be used in, and also serve as a motivation for, later sections. In particular, we give the definition of~$\aw$, we recall how $q$-Racah polynomials appear as overlap coefficients of eigenvectors of the two generators of $\aw$, and we show that~$\aw$ can be realized as a~subalgebra of the quantized universal enveloping algebra of the Lie algebra $\slc$. Then, in Section~\ref{sec:aw2}, we will be ready to define~$\AW{2}$, a~rank~2 Askey--Wilson algebra. The relations in this algebra come from the relations between (coproducts of) twisted primitive elements and the Casimir in $\uql\otimes \uql$. We also point out the connection with the rank 2 Askey--Wilson algebra introduced in \cite{DeBie2020}. In the last two sections, Sections~\ref{sec:bivarqracah} and~\ref{sec:qdif}, we will construct a representation of~$\AW{2}$ where bivariate $q$-Racah polynomials, similar to the ones defined in~\cite{Gasper2007}, appear as overlap coefficients of the generators of~$\AW{2}$ and find explicitly their difference operator. Calculations concerning the relations between coproducts of twisted primitive elements can be found in Appendices~\ref{app:uquqbivaraw} and~\ref{app:calculationsrelationsAW4}.

\section[Zhedanov's Askey--Wilson algebra AW(3)]{Zhedanov's Askey--Wilson algebra $\boldsymbol{\aw}$}\label{sec:aw}

In this section, we will summarize some results from the Askey--Wilson algebra $\aw$, later in this paper also referred to as the original $\aw$. For more details concerning these results, see~\cite{Zhedanov1991}. Throughout this paper, we assume that $0<q<1$ is fixed.

\subsection{The Askey--Wilson algebra}
Let $\aw$ be the Askey--Wilson algebra defined by Zhedanov \cite{Zhedanov1991}. This is the unital, associative, complex algebra generated by three generators $K_0$, $K_1$ and $K_2$ subject to the relations
\begin{gather}
[K_0,K_1]_q=K_2,\nonumber\\
[K_1,K_2]_q=BK_1 + C_0K_0+D_0,\label{eq:aworiginal}\\
[K_2,K_0]_q=BK_0+C_1K_1 + D_1,\nonumber
	\end{gather}
where $B,C_0,C_1,D_0,D_1\in\RR$ are the structure constants of the algebra and $[\cdot,\cdot]_q$ is the so-called $q$-commutator defined by
\[
[X,Y]_q=q XY - q^{-1}YX.
\]
By substituting the first equation of \eqref{eq:aworiginal} into the second and third, $\aw$ can equivalently be described as the algebra generated by two generators $K_0$ and $K_1$. To ease notation later on, we will replace $K_0$ by $K$ and $K_1$ by $L$ and define the following functions,
\[
\shq(x):=q^x-q^{-x} \qquad \text{and} \qquad \chq(x):=q^x+q^{-x}.
\]
Then $\aw$ is the algebra generated by $K$, $L$ subject to the relations
\begin{gather}
	\chq(2)KLK - K^2L-LK^2= BK + C_1L + D_1,\label{eq:awrelation2}\\
	\chq(2)LKL - L^2K-KL^2= BL + C_0K + D_0 \label{eq:awrelation1}.	 		
\end{gather}
Moreover, $\aw$ has a Casimir element $Q$ given by
\begin{gather}
Q= \big(q^{-1}-q^3\big)KL[K,L]_q + q^2 ([K,L]_q )^2+B(KL+LK)+C_0q^2K^2+C_1q^{-2}L^2\nonumber\\
\hphantom{Q= }{} + D_0\big(1+q^2\big)K+D_1\big(1+q^{-2}\big)L. \label{eq:casimiraw3}
\end{gather}

\subsection[Representations of AW(3)]{Representations of $\boldsymbol{\aw}$} \label{ssec:representationsAW(3)}
On a finite-dimensional vector space, the generators $(K,L)$ will form a Leonard pair, meaning that $L$ acts as a tridiagonal operator on the eigenfunctions of $K$ and, by symmetry of $\aw$, $K$~acts as a tridiagonal operator on the eigenfunctions of~$L$. Moreover, the overlap coefficients of the eigenfunctions of $K$ and $L$ are Askey--Wilson (or $q$-Racah) polynomials where the parameters depend on the structure parameters $B$, $C_0$, $C_1$, $D_0$, $D_1$ and the dimension of the representation. We refer to \cite{TV2004,Vidunas2007} for a description of the relation between Leonard pairs and finite-dimensional representations of~$\aw$. Furthermore, for a complete classification of the finite-dimensional irreducible representations of the universal Askey--Wilson algebra, a central extension of $\aw$, we refer to~\cite{Huang2015}.

Let $V$ be a $(N+1)$-dimensional vector space. The sign of the constants $C_0$ and $C_1$ determine the form of the spectra of the generators $L$ and $K$ respectively. That is, the spectrum of $K$ is of the form $\shq(x)$ if $C_1>0$, $\chq(x)$ if $C_1 <0$ and $q^x$ if $C_1=0$ and similarly for~$L$ and~$C_0$. Each of the 9 cases can be treated similarly. We will focus on the case where $C_0,C_1 <0$. Under the so-called `quantization condition', which we will mention later, there is an irreducible $(N+1)$-dimensional representation of $\aw$. By rescaling our generators $K$ and $L$, we can assign any value, while keeping the same sign, to $C_0$ and $C_1$. We will choose the canonical form of $\aw$, where
\[
C_0=C_1=-(\shq(2))^2.
\]
Then we have the following representation of $\aw$: there exist eigenvectors $\{\psi_n\}_{n=0}^N$ of $K$ such that
\begin{gather}
	K\psi_n=\lambda_n \psi_n,\label{eq:1vd5}\\
	L\psi_n=a_n\psi_{n-1}+b_n\psi_n + a_{n+1}\psi_{n+1}.\label{eq:anbntri}
\end{gather}
The eigenvalues $\lambda_n$ and coefficients $a_n$, $b_n$ are given by
\begin{gather}
	\lambda_n= \shq(2n+p_0+1),\label{eq:3vd5}\\
	a_n^2=\frac{-\prod_{k=0}^3(\shq(2n+p_0)-\shq(p_k))}{\chq(2n+p_0)^2\chq(2n+p_0+1)\chq(2n+p_0-1)},\label{eq:4vd5}\\
	b_n = \frac{B\lambda_n+D_1}{(\lambda_{n}-\lambda_{n-1})(\lambda_{n+1}-\lambda_n)}\label{eq:5vd5}.
\end{gather}
Here, $(p_k)_{k=0}^3$ are roots of the \textit{characteristic polynomial} $\mathscr{P}\colon \CC\to\CC$ of $\aw$ given by
\begin{gather}
 \mathscr{P}(z)= C_0\frac{(\shq(1))^2}{(\chq(1))^4} z^4 +D_0\frac{(\shq(1))^2}{(\chq(1))^2}z^3\nonumber\\
\hphantom{\mathscr{P}(z)=}{}
+\left(\frac{B^2-(\shq(1))^2Q_0}{(\chq(1))^2}+\frac{\left((\shq(1))^2-4\right)C_0C_1}{(\chq(1))^4}\right)z^2 \nonumber\\
\hphantom{\mathscr{P}(z)=}{}
+\left(BD_1-\frac{4C_1D_0}{(\chq(1))^2}\right)z +D_1^2+C_1\frac{B^2+4Q_0}{(\chq(1))^2}-\frac{4C_0C_1^2}{(\chq(1))^4}, \label{eq:awpol}
\end{gather}
where $Q_0$ is the representation value of the Casimir $Q$ from \eqref{eq:casimiraw3}. Moreover, the structure constants $B$, $D_0$ and $D_1$ as well as the value of the Casimir $Q$ can be expressed in terms of the roots of the characteristic polynomial $\mathscr{P}$.
\begin{Proposition}\label{prop:BDQpk}
	We have
	\begin{gather*}
		B = (\shq(1))^2\!\left(\! \shq\!\left(\frac{p_0+p_1}{2}\right)\shq\!\left(\frac{p_2+p_3}{2}\right)- \chq\!\left(\frac{p_0-p_1}{2}\right)\chq\!\left(\frac{p_2-p_3}{2}\right)\right),\\
		D_0= \frac{(\shq(2))^2}{\chq(1)}\!\left(\!\shq\!\left(\frac{p_0+p_1}{2}\right)\chq\!\left(\frac{p_0-p_1}{2}\right)+\shq\!\left(\frac{p_2+p_3}{2}\right)\chq\!\left(\frac{p_2-p_3}{2}\right)\right),\\
		D_1= \frac{(\shq(2))^2}{\chq(1)}\!\left(\!\shq\!\left(\frac{p_2+p_3}{2}\right)\chq\!\left(\frac{p_0-p_1}{2}\right)+\shq\!\left(\frac{p_0+p_1}{2}\right)\chq\!\left(\frac{p_2-p_3}{2}\right)\right),\\
		Q_0=(\shq(2))^2\!\left( \prod_{k=0}^3\shq\!\left(\half p_k\right) + (\shq(1))^2 -\frac{(\shq(1))^2B+D_1^2}{(\shq(1))^2(\shq(2))^2}\right).
	\end{gather*}
\end{Proposition}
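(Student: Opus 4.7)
My plan is to apply Vieta's formulas to the characteristic polynomial $\mathscr{P}(z)$ in~\eqref{eq:awpol}. By the derivation of~$\mathscr{P}$ from~\eqref{eq:4vd5}, in the canonical scaling $C_0=C_1=-\shq(2)^2$ the four numbers $\shq(p_0), \shq(p_1), \shq(p_2), \shq(p_3)$ are precisely the roots of $\mathscr{P}$, so
\[
\mathscr{P}(z) = \frac{C_0\shq(1)^2}{\chq(1)^4}\prod_{k=0}^{3}\bigl(z-\shq(p_k)\bigr) = \frac{C_0\shq(1)^2}{\chq(1)^4}\bigl(z^4 - e_1 z^3 + e_2 z^2 - e_3 z + e_4\bigr),
\]
where $e_j := e_j\bigl(\shq(p_0),\ldots,\shq(p_3)\bigr)$ denotes the $j$-th elementary symmetric polynomial in the roots. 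Matching with the explicit~\eqref{eq:awpol} coefficient-by-coefficient gives four scalar equations. The $z^3$ coefficient determines $D_0$ linearly in $e_1$; the $z$ coefficient then fixes $D_1$ linearly in $e_3$ (given $B$ and $D_0$); the $z^2$ and $z^0$ coefficients are linear in $B^2, Q_0, D_1^2$ and can be solved jointly for~$B$ and~$Q_0$.

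The remaining step is to rewrite $e_1, e_2, e_3, e_4$ in the half-sum/half-difference form of the proposition. The only tools needed are the product-to-sum identities
\[
\shq(x)\chq(y) = \shq(x{+}y)+\shq(x{-}y), \qquad
\chq(x)\chq(y) = \chq(x{+}y)+\chq(x{-}y), \qquad
\shq(x)\shq(y) = \chq(x{+}y)-\chq(x{-}y),
\]
and their sum-to-product inverses. For instance, taking the first identity with $(x,y)=\bigl(\tfrac{\alpha+\beta}{2},\tfrac{\alpha-\beta}{2}\bigr)$ gives $\shq\!\bigl(\tfrac{\alpha+\beta}{2}\bigr)\chq\!\bigl(\tfrac{\alpha-\beta}{2}\bigr)=\shq(\alpha)+\shq(\beta)$, and applying this to $e_1$ with the pairing $(p_0,p_1),(p_2,p_3)$ produces the stated formula for~$D_0$. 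The formula for~$D_1$ arises from the analogous manipulation on the combination of $e_1$ and $e_3$ corresponding to the \emph{crossed} pairing, in which the $\shq$- and $\chq$-components of the two pairs are swapped. The formula for~$B$ comes from $e_2$ regrouped via the $\chq\chq\pm\chq\chq$ identities into the stated $\shq\shq-\chq\chq$ form. Finally, $Q_0$ follows from the $z^0$ equation after substituting the already-derived $B$ and $D_1$, and using the factorization $\shq(p)=\shq(p/2)\chq(p/2)$ in $e_4$ to expose the factor $\prod_k\shq(p_k/2)$.

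The main obstacle is purely algebraic bookkeeping. A quartic admits three natural pairings of its roots, $\{(01)(23),(02)(13),(03)(12)\}$, and the statement singles out the pairing $(01)(23)$. Verifying that the Vieta relations, after applying the identities above, really collapse into precisely the stated single-pair products of $\shq$ and $\chq$, and that $D_0$ versus~$D_1$ is correctly distinguished by the ``straight'' versus ``crossed'' pairing of sums and differences, is tedious but entirely elementary; I expect no conceptual obstruction.
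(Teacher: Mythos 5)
Your overall strategy (Vieta's formulas on $\mathscr{P}$ plus product-to-sum identities) is the natural route, and it is genuinely more than the paper does: the paper's ``proof'' is a one-line citation of \cite[equation~(2.4)]{Zhedanov1991} together with a remark about the sign change $C_0,C_1<0$. But your argument breaks at its first step, the identification of the roots, and this is not cosmetic. You assert that ``by the derivation of $\mathscr{P}$ from \eqref{eq:4vd5}'' the roots of $\mathscr{P}(z)$ are the numbers $\shq(p_k)$; no such derivation exists in the paper (both \eqref{eq:4vd5} and \eqref{eq:awpol} are quoted from Zhedanov), and this identification is inconsistent with the very formulas you are trying to prove. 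Indeed, by Vieta applied to \eqref{eq:awpol}, using $C_0=-(\shq(2))^2=-(\shq(1))^2(\chq(1))^2$,
\[
\sum_{k=0}^{3}\bigl(\text{roots of }\mathscr{P}\bigr)
=-\frac{D_0(\shq(1))^2/(\chq(1))^2}{C_0(\shq(1))^2/(\chq(1))^4}
=\frac{D_0}{(\shq(1))^2},
\]
so with roots $\shq(p_k)$ your method yields
\[
D_0=(\shq(1))^2\sum_{k=0}^3\shq(p_k)
=\frac{(\shq(2))^2}{(\chq(1))^2}\left(\shq\!\left(\tfrac{p_0+p_1}{2}\right)\chq\!\left(\tfrac{p_0-p_1}{2}\right)+\shq\!\left(\tfrac{p_2+p_3}{2}\right)\chq\!\left(\tfrac{p_2-p_3}{2}\right)\right),
\]
which is the stated $D_0$ divided by $\chq(1)$; the $z^1$-equation then likewise fails to return the stated $B$ and $D_1$. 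The identification forced by the Proposition is that $\mathscr{P}$ vanishes at $z=\chq(1)\shq(p_k)$, i.e., the variable of \eqref{eq:awpol} is $z=\chq(1)\shq(x)$ in the spectral parameter $x$ of \eqref{eq:3vd5}--\eqref{eq:4vd5}. With that normalization your program does recover the first three formulas: the $z^3$-equation gives the stated $D_0$, and the $z^1$-equation gives the stated product $BD_1$, using your product-to-sum identities together with
\[
(S_{01}S_{23}-C_{01}C_{23})(S_{23}C_{01}+S_{01}C_{23})+4(S_{01}C_{01}+S_{23}C_{23})=\sum_{0\le i<j<k\le3}\shq(p_i)\shq(p_j)\shq(p_k),
\]
where $S_{01}=\shq\bigl(\tfrac{p_0+p_1}{2}\bigr)$, $C_{01}=\chq\bigl(\tfrac{p_0-p_1}{2}\bigr)$ and similarly for $S_{23}$, $C_{23}$. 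Pinning down this normalization is exactly the content that has to be imported from the representation-theoretic derivation of the characteristic polynomial; it cannot simply be asserted.

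There is a second obstruction, not of your making, into which your plan would run: even with the corrected identification, the last step cannot terminate in the printed $Q_0$. With roots $\chq(1)\shq(p_k)$, the $z^2$- and $z^0$-equations each determine $Q_0$, they agree with one another, and they force
\[
Q_0=\frac{B^2}{(\shq(1))^2}+(\shq(2))^2\left(\sum_{0\le i<j\le3}\shq(p_i)\shq(p_j)+(\shq(1))^2-4\right),
\]
which does not agree with the expression in Proposition~\ref{prop:BDQpk}: for instance, at $q=\tfrac12$ and $(p_0,p_1,p_2,p_3)=(2,0,4,0)$ the display above gives $Q_0\approx 872$, while the printed formula gives $Q_0\approx-3445$. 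Thus \eqref{eq:awpol} and the printed $Q_0$-formula are mutually inconsistent as stated (a normalization slip in transcribing Zhedanov's $C_0,C_1>0$ conventions), so no amount of Vieta bookkeeping can produce the fourth identity from \eqref{eq:awpol}; resolving which display to correct requires redoing the derivation of $\mathscr{P}$ from the representation in the $C_0,C_1<0$ setting, the step that both you and the paper omit.
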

\begin{proof}This is similar to \cite[equation~(2.4)]{Zhedanov1991}. We have a slightly different expression since we are in the case $C_0,C_1 <0$ instead of $C_0,C_1 >0$.
\end{proof}

Since our representation is finite-dimensional, we require that $a_0=a_{N+1}=0$. This leads to the `quantization condition' $2(N+1)=p_1-p_0$. When the spectrum of $K$ is fixed, the representation is unique up to equivalence since we can calculate the matrix coefficients of $L$ using \cite[equation~(1.12)]{Zhedanov1991} and \eqref{eq:5vd5}. The eigenvalues $\lambda_n$ satisfy the following recursive relations, which we will need later on,
\begin{gather}
\begin{split}
&\lambda_n^2+\lambda_{n+1}^2 = \chq(2)\lambda_n\lambda_{n+1}-C_1, \\
&\chq(2)\lambda_n=\lambda_{n+1}+\lambda_{n-1},\\
&\lambda_n^2=\lambda_{n+1}\lambda_{n-1} -C_1.
\end{split}\label{eq:spectrumrecursiverelations}
\end{gather}
Moreover, using the symmetry of $\aw$, we can interchange the roles of $K$ and $L$ and get a~similar result. That is, there exist eigenvectors $\{\phi_n\}_{n=0}^N$ of $L$ such that
\begin{equation}\label{eq:1vd5omgedraaid}
		L\phi_m=\mu_m \phi_m,\qquad
		K\phi_m=\tilde{a}_m\phi_{m-1}+\tilde{b}_n\phi_m + \tilde{a}_{m+1}\phi_{m+1},
\end{equation}
where $(\mu_m,\tilde{a}_m,\tilde{b}_m)$ can be found from the formulas for $(\lambda_n,a_n,b_n)$ after interchanging $C_0\leftrightarrow C_1$, $D_0\leftrightarrow D_1$ and $p_k\leftrightarrow s_k$, where $s_k$ are the roots of the polynomial \eqref{eq:awpol} where $C_0\leftrightarrow C_1$ and $D_0\leftrightarrow D_1$. The $p_k$ and $s_k$ are linked via
\begin{gather*}
	s_0=\half \Sigma_p - p_1,\qquad\!\! s_1=\half \Sigma_p - p_0,\qquad\!\! s_2=\half \Sigma_p - p_3,\qquad\!\! s_3=\half \Sigma_p - p_1,\qquad\!\! \Sigma_p = \sum_{k=0}^{3}p_k.
\end{gather*}

\subsection[q-Racah polynomials as overlap coefficients of the generators of AW(3)]{$\boldsymbol{q}$-Racah polynomials as overlap coefficients of the generators of $\boldsymbol{\aw}$} \label{ssec:q-Racah pol}
If the structure parameters $B$, $C_0$, $C_1$, $D_0$ and $D_1$ are real and $a_n^2\geq 0$, we have that $\lambda_n$, $a_n$, $b_n$ are real as well. We then define an inner product on $V$ on the basis $\{\psi_n\}_{n=0}^N$ by
\[
\langle \psi_n,\psi_m\rangle=\delta_{mn}.
\]
Then both $K$ and $L$ are self-adjoint with respect to this inner product. Consequently, the sets of eigenvectors, $\{\psi_n\}_{n=0}^N$ as well as $\{\phi_n\}_{n=0}^N$, will per definition form an orthonormal basis. Define the normalized overlap coefficients $\widehat{P}_n(m)$ to be
\begin{align*}
	\widehat{P}_n(m)=\frac{\langle\phi_m,\psi_n\rangle}{\langle\phi_m,\psi_0\rangle},\qquad n,m=0,\ldots,N.
\end{align*}
Since
\begin{align}
\mu_m\langle\phi_m,\psi_n\rangle & =\langle L\phi_m, \psi_n\rangle =\langle \phi_m, L\psi_n\rangle\nonumber\\
		&=a_n\langle \phi_m,\psi_{n-1}\rangle+b_n\langle \phi_m,\psi_{n}\rangle+a_{n+1}\langle \phi_m,\psi_{n+1}\rangle, \label{eq:origin3term}
\end{align}
the overlap coefficients satisfy
\begin{gather}
	\mu_m \widehat{P}_{n}(m)= a_n \widehat{P}_{n-1}(m) + b_n \widehat{P}_{n}(m) + a_{n+1}\widehat{P}_{n+1}(m).\label{eq:qracahthreeterm}
\end{gather}
Together with the initial condition $\widehat{P}_0(m)=1$ and the convention $\widehat{P}_{-1}(m)=0$, \eqref{eq:qracahthreeterm} generates polynomials $\big(\widehat{P}_n\big)_{n=0}^N$ in the variable $\mu_m$, which can be shown \cite{Zhedanov1991} to be $q$-Racah polynomials with parameters that depend on $(p_k)_{k=0}^3$. The $q$-Racah polynomials $\{R_n\}_{n=0}^N$ are defined \cite{KLS} by a $q$-hypergeometric series,
\begin{gather}
	R_n(y_j;\alpha,\beta,\gamma,\delta;q)=\qhyp{4}{3}{q^{-n},\alpha\beta q^{n+1},q^{-j},\gamma\delta q^{j+1}}{\alpha q,\beta\delta q,\gamma q}{q;q}, \label{eq:defqracah}
\end{gather}
where
\[
y_j=q^{-j}+\gamma\delta q^{j+1}.
\]	
Then we have
\begin{gather}
	\widehat{P}_n(m)= \frac{1}{\sqrt{h_n}} R_n\big(y_m;\alpha,\beta,\gamma,\delta;q^2\big),\label{eq:overlapqracah1}
\end{gather}
where $h_n$ are normalizing constants and
\begin{gather}
	\alpha=-q^{p_0+p_2},\qquad \beta=q^{p_0-p_2},\qquad \gamma=q^{p_0-p_1},\qquad \delta=-q^{p_2+p_3}. \label{eq:qracahpar}
\end{gather}
We will rescale the overlap coefficients $\widehat{P}_n(m)$ such that it becomes a $q$-Racah polynomial without the normalizing constant $\sqrt{1/h_n}$, which will be convenient later on. That is, we define
\begin{gather}
	P_n(m)=\frac{\inprod{\phi_m}{\psi_n}}{\inprod{\phi_m}{\psi_0}} \frac{\inprod{\phi_0}{\psi_0}}{\inprod{\phi_0}{\psi_n}}.\label{eq:overlap}
\end{gather}
This gives on the one hand
\[
P_n(0)=1.
\]
On the other hand, by \eqref{eq:overlapqracah1}, we have
\begin{gather*}
P_n(0)=\frac{1}{\sqrt{h_n}} R_n\big(y_0;\alpha,\beta,\gamma,\delta;q^2\big)\frac{\inprod{\phi_0}{\psi_0}}{\inprod{\phi_0}{\psi_n}}= \frac{1}{\sqrt{h_n}} \frac{\inprod{\phi_0}{\psi_0}}{\inprod{\phi_0}{\psi_n}},
\end{gather*}
since by the definition of the $q$-hypergeometric series we have
\[
R_n\big(y_0;\alpha,\beta,\gamma,\delta;q^2\big)=1.
\]
Therefore,
\[
\frac{\inprod{\phi_0}{\psi_0}}{\inprod{\phi_0}{\psi_n}} = \sqrt{h_n}
\]
and thus
\begin{gather*}
	P_n(m)=R_n\big(y_m;\alpha,\beta,\gamma,\delta;q^2\big). 
\end{gather*}
Since $\{\phi_m\}_{m=0}^N$ is an orthonormal basis, we have
\[
\psi_n=\sum_{m=0}^N\inprod{\psi_n}{\phi_m}\phi_m.
\]
Orthogonality in the degree of the $q$-Racah polynomials comes from $\{\psi_n\}_{n=0}^N$ being orthonormal,
\begin{gather}
	\delta_{n,n'}=\inprod{\psi_{n'}}{\psi_n}=\sum_{m=0}^N \inprod{\psi_{n'}}{\phi_m}\inprod{\phi_m}{\psi_n} = \sum_{m=0}^N \tilde{w}(m,n)P_n(m)\overline{P_{n'}(m)},\label{eq:orthogonalityqracah}
\end{gather}
where $\tilde{w}(m,n)$ is the weight function given by
\begin{gather*}
	\tilde{w}(m,n)=\frac{|\inprod{\phi_m}{\psi_0}\inprod{\phi_0}{\psi_n}|^2}{|\inprod{\phi_0}{\psi_0}|^2}.
\end{gather*}
Since the orthogonality for $q$-Racah polynomials is unique, we have (see, e.g.,~\cite{KLS}),
\begin{gather}
	\tilde{w}(m,n):=\tilde{w}(m,n,p_0,p_1,p_2,p_3;q)=\frac{\rho\big(m,\alpha,\beta,\gamma,\delta;q^2\big)}{h_n(\alpha,\beta,\gamma,\delta;q^2)},\label{eq:weighttilde}
\end{gather}
where
\begin{gather}
	\rho(m,\alpha,\beta,\gamma,\delta;q)= \frac{(\alpha q,\beta\delta q, \gamma q, \gamma\delta q;q)_m\big(1-\gamma\delta q^{2m+1}\big)}{\big(q,\alpha^{-1}\gamma\delta q, \beta^{-1}\gamma q, \delta q;q\big)_m(\alpha\beta q)^m(1-\gamma\delta q)},\label{eq:qracahweight}
\end{gather}
and
\begin{gather}
h_n(\alpha,\beta,\gamma,\delta;q)=\frac{\big(\alpha^{-1},\beta^{-1}\gamma,\alpha^{-1}\delta,\beta^{-1},\gamma\delta q^2;q\big)_\infty}{\big(\alpha^{-1}\beta^{-1} q^{-1},\alpha^{-1}\gamma\delta q, \beta^{-1}\gamma q,\delta q;q\big)_\infty}\nonumber\\
\hphantom{h_n(\alpha,\beta,\gamma,\delta;q)=}{}
\times\frac{(1-\alpha\beta q)(\gamma\delta q)^n \big(q,\alpha\beta\gamma^{-1}q,\alpha\delta^{-1}q,\beta q;q\big)_n}{\big(1-\alpha\beta q^{2n+1}\big)(\alpha q,\alpha\beta q,\beta\delta q,\gamma q;q)_n}. \label{eq:qracahnorm}
\end{gather}
In \eqref{eq:weighttilde}, the roots $(p_k)_{k=0}^3$ are related to $(\alpha,\beta,\gamma,\delta)$ via~\eqref{eq:qracahpar}.

As can be seen, the parameters of the $q$-Racah polynomials can be simply computed from the roots $(p_k)_{k=0}^3$. In Section~\ref{sec:herdefAW}, we will reparametrize the structure constants $B$, $C_0$, $C_1$, $D_0$, $D_1$ such that $(p_k)_{k=0}^3$ can be easily determined from the structure parameters.

\subsection[AW(3) as subalgebra of U\_q(sl(2,C))]{$\boldsymbol{\aw}$ as subalgebra of $\boldsymbol{{\mathcal{U}_{q}(\mathfrak{sl}(2,\mathbb C))}}$}\label{sec:uq}

Let $\Uq=\uql$ be the unital, associative, complex algebra generated by\footnote{Note that the $\hat{K}\in\Uq$ is different from $K\in\aw$.} $\hat{K}$, $\hat{E}$, $\hat{F}$, $\hat{K}^{-1}$ subject to the relations
\begin{gather*}
	\hat{K}\hat{K}^{-1}=1=\hat{K}^{-1}\hat{K},\qquad \hat{K}\hat{E}=q\hat{E}\hat{K},\qquad \hat{K}\hat{F}=q^{-1}\hat{F}\hat{K},\qquad \hat{E}\hat{F}-\hat{F}\hat{E}=\frac{\hat{K}^2-\hat{K}^{-2}}{q-q^{-1}}.
\end{gather*}
The quantum algebra $\Uq$ has a comultiplication $\Delta\colon \Uq \to \Uq \otimes \Uq$ defined on the generators by
\begin{alignat*}{3}
& \Delta\big(\hat{K}\big)=\hat{K}\otimes \hat{K},\qquad && \Delta\big(\hat{E}\big)=\hat{K}\otimes \hat{E} + \hat{E}\otimes \hat{K}^{-1},&\\
& \Delta\big(\hat{K}^{-1}\big)=\hat{K}^{-1}\otimes \hat{K}^{-1},\qquad && \Delta\big(\hat{F}\big)=\hat{K}\otimes \hat{F} + \hat{F} \otimes \hat{K}^{-1}.&
\end{alignat*}
Moreover, $\Uq$ has a Casimir element given by
\begin{gather*}
	\Omega=q^{-1}\hat{K}^2+q\hat{K}^{-2}+(\shq(1))^2\hat{E}\hat{F} = q\hat{K}^2+q^{-1}\hat{K}^{-2}+(\shq(1))^2\hat{F}\hat{E},
\end{gather*}
which commutes with all elements in $\Uq$. Note that $\Delta(\Omega)$ is not a central element of ${\Uq\otimes\Uq}$. Actually, we will see in Section \ref{sec:qdif} that it acts simultaneously as a three-term operator on the eigenvectors of $1\otimes Y_K$ as well as $Y_L\otimes 1$.
Two elements of $\Uq$, which are closely related to Koornwinder's twisted primitive ele\-ments~\cite{Koornwinder1993}, play an important role in this paper. Let $a_{\hat{E}},a_{\hat{F}},a_s,b_{\hat{E}},b_{\hat{F}},b_t\in\CC$, then we define (suggestively) the following elements in $\Uq$,
\begin{gather*}
	Y_{K} =q^\half a_{\hat{E}} \hat{E}\hat{K}+ q^{-\half}a_{\hat{F}} \hat{F}\hat{K} + a_s \hat{K}^2,\\ 
	Y_{L} =q^{-\half} b_{\hat{E}} \hat{E}\hat{K}^{-1}+ q^{\half} b_{\hat{F}} \hat{F}\hat{K}^{-1} + b_t \hat{K}^{-2}.
\end{gather*}	 	
They behave quite well with respect to coproduct,\footnote{The subspaces $\vspan{Y_K,1}$ and $\vspan{Y_L,1}$ satisfy the right and left co-ideal property respectively.}
\begin{align}
	\begin{split}&\Delta(Y_K)=K^2 \otimes Y_K + \big(Y_K -a_s \hat{K}^2\big)\otimes 1,\\
		&\Delta(Y_L)=1\otimes \big(Y_L-b_t \hat{K}^{-2}\big) + Y_L\otimes K^{-2}.\end{split}\label{eq:coproducttpe}
\end{align}
From \eqref{eq:coproducttpe} one can see that $1\otimes Y_K$ commutes with $\Delta(Y_K)$ and $Y_L\otimes 1$ commutes with $\Delta(Y_L)$.

Note that $Y_K$ and $Y_L$ relate to the twisted primitive elements $Y_{s,u}$ and $\tilde{Y}_{t}$ from \cite{Groenevelt2021} by adding a constant such that the unital term cancels\footnote{$Y_{s,u}$ and $\tilde{Y}_{t}$ satisfy more general AW-relations, which are more complicated to work with but are in essence the same object.} and taking $a_{\hat{E}}=u$, $a_{\hat{F}}=-u^{-1}$ and $b_{\hat{E}}=b_{\hat{F}}=1$. Thus, shifting the eigenvalues by a constant while keeping the same eigenvectors.

The elements $Y_K$ and $Y_L$ satisfy the AW-relations \cite{Granovskii1993}, i.e., they satisfy both \eqref{eq:awrelation2} and \eqref{eq:awrelation1}.
\begin{Theorem}\label{thm:tpeaw3}
	The elements $Y_{K},Y_{L}\in\Uq$ satisfy the AW-relations \eqref{eq:awrelation2} and \eqref{eq:awrelation1} with structure parameters
	\begin{gather}
B= (\shq(1))^2 (a_s b_t- \theta\Omega ),\nonumber\\
C_0= - (\chq(1))^2 b_{\hat{E}} b_{\hat{F}},\nonumber\\
C_1= -(\chq(1))^2a_{\hat{E}}a_{\hat{F}},\nonumber\\
D_0 = \chq(1)\big( a_s\Omega b_{\hat{E}} b_{\hat{F}}+(\shq(1))^2b_t\theta\big),\nonumber\\
D_1 = \chq(1)\big( b_t\Omega a_{\hat{E}}a_{\hat{F}}+(\shq(1))^2a_s\theta\big),\label{eq:embeddingconstants}
\end{gather}
where $\theta=-(\shq(1))^{-2}\big(a_{\hat{E}}b_{\hat{F}} + a_{\hat{F}}b_{\hat{E}}\big)$.
\end{Theorem}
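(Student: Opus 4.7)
The plan is to verify both AW-relations \eqref{eq:awrelation2} and \eqref{eq:awrelation1} by direct computation in $\Uq$: substitute $K\mapsto Y_K$ and $L\mapsto Y_L$, expand the left-hand sides $\chq(2)Y_KY_LY_K - Y_K^2Y_L - Y_LY_K^2$ and $\chq(2)Y_LY_KY_L - Y_L^2Y_K - Y_KY_L^2$ into monomials in $\hat E$, $\hat F$, $\hat K^{\pm1}$, bring everything to a PBW-ordered form using the defining relations $\hat K\hat E=q\hat E\hat K$, $\hat K\hat F=q^{-1}\hat F\hat K$, and $\hat E\hat F-\hat F\hat E=(\hat K^2-\hat K^{-2})/(q-q^{-1})$, and then compare the result with $BY_K+C_1Y_L+D_1$ and $BY_L+C_0Y_K+D_0$ respectively. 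Since only the second AW-relation follows from the first by the involution swapping $(a_{\hat E},a_{\hat F},a_s,q)\leftrightarrow(b_{\hat E},b_{\hat F},b_t,q^{-1})$ (which exchanges $Y_K\leftrightarrow Y_L$ and interchanges the pairs $C_0\leftrightarrow C_1$, $D_0\leftrightarrow D_1$ while fixing $B$), it suffices in principle to verify one of the two relations.

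To organize the computation, I would split each of $Y_K$ and $Y_L$ into three pieces (the $\hat E$-term, the $\hat F$-term, and the diagonal term) and record the contribution of each triple to the cubic expression $\chq(2)Y_KY_LY_K - Y_K^2Y_L - Y_LY_K^2$. Several triples vanish outright or telescope because of the identity
\[
\chq(2)\,XYX - X^2Y - YX^2 = [X,[X,Y]_{\!q}]_{q^{-1}}
\]
combined with the nice $q$-commutation behaviour of $\hat E\hat K^{\pm1}$, $\hat F\hat K^{\pm1}$, $\hat K^{\pm2}$ among themselves. The genuinely nontrivial contributions come from triples containing one $\hat E$ and one $\hat F$; these produce, after applying $\hat E\hat F-\hat F\hat E$, terms proportional to $\hat K^{\pm2}$, which then either combine with the diagonal pieces of $Y_K$, $Y_L$ to reproduce a multiple of $Y_K$ or $Y_L$, or land in the scalar part. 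At this stage I would use the Casimir identity
\[
(\shq(1))^2\hat E\hat F = \Omega - q^{-1}\hat K^2 - q\hat K^{-2}
\]
to trade all remaining isolated $\hat E\hat F$ products for $\Omega$ plus diagonal corrections. This is precisely the step that produces the $\Omega$-dependence visible in $B$, $D_0$, $D_1$.

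Reading off coefficients, the $\hat E\hat K$-, $\hat F\hat K$-terms must combine to give $C_1\cdot(q^{\pm\half}a_{\hat E/\hat F})$, forcing $C_1=-(\chq(1))^2a_{\hat E}a_{\hat F}$; the $\hat E\hat K^{-1}$-, $\hat F\hat K^{-1}$-terms must add up to $B\cdot(q^{\mp\half}b_{\hat E/\hat F})$, which yields $B=(\shq(1))^2(a_sb_t-\theta\Omega)$ with $\theta=-(\shq(1))^{-2}(a_{\hat E}b_{\hat F}+a_{\hat F}b_{\hat E})$; the remaining diagonal and scalar terms then fix~$D_1$. The formulas for $C_0$ and $D_0$ come from the symmetric relation by the involution above.

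The main obstacle is not conceptual but purely combinatorial: keeping track of the $27$ monomial contributions to each cubic expression, together with the $q$-powers they pick up when pushed into PBW order, is error-prone. In practice I would carry out the reduction in a symbolic way by organizing the computation according to the weight (the $\hat K$-eigenvalue) of the monomial, so that only monomials of the same weight can contribute to the same target term in $BY_K+C_1Y_L+D_1$ (or its partner). This reduces the bookkeeping substantially and makes the appearance of $\Omega$ in the structure constants transparent. The detailed computation is essentially the one in \cite{Granovskii1993} and may be deferred to the appendices referenced in the introduction.
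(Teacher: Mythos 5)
Your overall strategy is sound, and it is essentially the only strategy available: the paper gives no proof of Theorem~\ref{thm:tpeaw3} at all, but simply quotes it from Granovskii--Zhedanov \cite{Granovskii1993}, and Appendix~\ref{app:uquqbivaraw} of the paper proves the coproduct version (Theorem~\ref{thm:tabel}) \emph{using} \eqref{eq:embeddingconstants} as input, not the other way around --- so your plan to defer the computation to those appendices is misplaced, and a full direct verification is exactly what a self-contained proof requires. Your symmetry reduction to a single relation is legitimate: the swap $(a_{\hat E},a_{\hat F},a_s)\leftrightarrow(b_{\hat E},b_{\hat F},b_t)$, $q\leftrightarrow q^{-1}$ is implemented by the algebra isomorphism fixing $\hat E$, $\hat F$ and sending $\hat K\mapsto\hat K^{-1}$ (a point you should state, since ``$q\leftrightarrow q^{-1}$'' alone is not an operation on the fixed algebra $\Uq$); this map exchanges $Y_K\leftrightarrow Y_L$, fixes $\Omega$, $\theta$ and $B$, and interchanges $C_0\leftrightarrow C_1$, $D_0\leftrightarrow D_1$, as needed.

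The genuine problem is that your ``reading off coefficients'' step --- the heart of the verification --- is stated backwards, and as written it would fail. The right-hand side of \eqref{eq:awrelation2} is $BY_K+C_1Y_L+D_1$, so the $\hat E\hat K$- and $\hat F\hat K$-monomials (the constituents of $Y_K$, carrying coefficients $q^{\pm\half}a_{\hat E/\hat F}$) must be matched against $B$, and the $\hat E\hat K^{-1}$-, $\hat F\hat K^{-1}$-monomials (the constituents of $Y_L$, carrying $q^{\mp\half}b_{\hat E/\hat F}$) against $C_1$ --- precisely the opposite of your pairing. Your assignment cannot be made consistent: $C_1$ must come out a scalar, yet the sector you assign to it is the one whose coefficients involve mixed $a\cdot b$ products and $\Omega$ (one factor absorbed from $Y_K$, one from $Y_L$), which is exactly why it is $B$, and not $C_1$, that acquires the $\Omega$-dependence $(\shq(1))^2(a_sb_t-\theta\Omega)$. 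Conversely, in $\chq(2)Y_KY_LY_K-Y_K^2Y_L-Y_LY_K^2$ it is the $Y_L$-shaped sector in which \emph{both} $Y_K$-factors are absorbed through the commutator $\hat E\hat F-\hat F\hat E$, producing coefficients proportional to $a_{\hat E}a_{\hat F}\,b_{\hat E/\hat F}$; comparing with $C_1\cdot q^{\mp\half}b_{\hat E/\hat F}$ is what forces $C_1=-(\chq(1))^2a_{\hat E}a_{\hat F}$. With the pairing corrected the computation goes through. A second, minor slip: your organizing identity should carry a sign, $\chq(2)XYX-X^2Y-YX^2=-[X,[X,Y]_q]_{q^{-1}}$, since expanding gives $[X,[X,Y]_q]_{q^{-1}}=X^2Y+YX^2-\chq(2)XYX$; this does not affect your vanishing arguments, but it would corrupt any nonzero contribution transcribed through it.
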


\begin{Remark}Note that $B$, $D_0$, $D_1$ are no longer constants due to the appearance of the Casimir~$\Omega$. However, they remain central elements of~$\Uq$. Therefore, $Y_K$ and $Y_L$ formally do not generate an AW(3) algebra. However, they do generate a universal Askey--Wilson algebra \cite{Terwilliger2011}, which is the central extension of $\aw$ where $B$, $C_0$, $C_1$, $D_0$, $D_1$ are central elements of the algebra instead of constants.
\end{Remark}

The representation of almost any instance of $\aw$ can be realized by the pair $(Y_K,Y_L)$. The cases where either $D_0$ or $D_1$ is the only non-zero parameter have to be excluded.
Since in an irreducible representation, $\Omega$ is a constant, we define the algebra $\UqO$, where we add an extra relation to $\Uq$ where $\Omega$ is equal to some $\Omega_0\in\CC$. That is,
\[
\Omega = \Omega_0 \cdot 1, \qquad \Omega_0\in\CC,\qquad 1\in\UqO.
\]
Using this, we can show that almost every instance of $\aw$ is homomorphic to a subalgebra of $\UqO$.
\begin{Proposition}\label{thm:awisouq}
	Let $(B,C_0,C_1,D_0,D_1)\in\RR^5$ be structure constants of $\aw$ such that $D_0$ or $D_1$ is not the only non-zero constant. Then there exist $ Y_K,Y_L\in\UqO$ such that $\aw$ is homomorphic to the subalgebra of $\UqO$ generated by $Y_K$ and $Y_L$.
\end{Proposition}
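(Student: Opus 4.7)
The plan is to reduce the claim to surjectivity of the parameter map given by \eqref{eq:embeddingconstants}. By Theorem~\ref{thm:tpeaw3}, for any $(a_{\hat{E}}, a_{\hat{F}}, a_s, b_{\hat{E}}, b_{\hat{F}}, b_t) \in \CC^6$ and any $\Omega_0 \in \CC$, the elements $Y_K, Y_L \in \UqO$ satisfy \eqref{eq:awrelation2}--\eqref{eq:awrelation1} with the structure constants prescribed by \eqref{eq:embeddingconstants} (where $\Omega$ is replaced by $\Omega_0$). By the universal property of $\aw$ this yields the desired homomorphism $K\mapsto Y_K$, $L\mapsto Y_L$, whose image is the subalgebra of $\UqO$ generated by $Y_K$ and $Y_L$. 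So it suffices to show that every non-excluded tuple $(B,C_0,C_1,D_0,D_1)\in\RR^5$ arises from some choice of the seven parameters via \eqref{eq:embeddingconstants}.

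I would first read off $\pi_a := a_{\hat{E}} a_{\hat{F}} = -C_1/\chq(1)^2$ and $\pi_b := b_{\hat{E}} b_{\hat{F}} = -C_0/\chq(1)^2$ from the $C_0, C_1$ equations. A brief case analysis on whether $\pi_a, \pi_b$ vanish shows that $\theta$ may be treated as an independent free parameter: given any target $\theta \in \CC$, one can solve $a_{\hat{E}} b_{\hat{F}} + a_{\hat{F}} b_{\hat{E}} = -\shq(1)^2 \theta$ compatibly with $a_{\hat{E}} a_{\hat{F}} = \pi_a$ and $b_{\hat{E}} b_{\hat{F}} = \pi_b$. When $\pi_a, \pi_b \neq 0$ this holds because the two cross-terms have fixed product $\pi_a\pi_b$ but unconstrained sum; when one vanishes one sets a factor to zero and uses the remaining one to absorb $\theta$.

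With $\theta$ available as a free parameter, the remaining three equations of \eqref{eq:embeddingconstants} form a system in the four unknowns $(a_s, b_t, \theta, \Omega_0)$. In the generic case $B, C_0, C_1 \neq 0$ one sets $\theta = 0$; the system decouples into $B = \shq(1)^2 a_s b_t$, $a_s\Omega_0 = -\chq(1) D_0/C_0$, $b_t\Omega_0 = -\chq(1) D_1/C_1$, yielding $\Omega_0^2 = \shq(1)^2\chq(1)^2 D_0 D_1/(B C_0 C_1)$ and hence all four unknowns up to a choice of square root. The remaining degenerate subcases, in which some of $B, C_0, C_1, D_0, D_1$ vanish, are handled by instead choosing $\theta \neq 0$ and selectively zeroing out $a_s$ or $b_t$ to match the surviving equations.

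The main obstacle is the bookkeeping of these degenerate vanishing patterns, each requiring a tailored choice of parameters; the two excluded configurations in the statement (only $D_0$ or only $D_1$ nonzero) sit at the extreme endpoints of this casework, where the simultaneous vanishing of $B, C_0, C_1$ and one of $D_0, D_1$ leaves the remaining equations too rigid for the above construction to produce the required realization.
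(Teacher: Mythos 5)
Your opening reduction is exactly the paper's: both proofs reduce the proposition to surjectivity of the parameter map \eqref{eq:embeddingconstants}, read off $a_{\hat{E}}a_{\hat{F}}=-C_1/(\chq(1))^2$ and $b_{\hat{E}}b_{\hat{F}}=-C_0/(\chq(1))^2$ from $C_1$, $C_0$, and argue (correctly, in both cases) that $\theta$ can then be treated as a free parameter. The gap is in what comes next. The paper fixes $\Omega_0\neq 0$ once and for all and solves the genuinely coupled system \eqref{eq:x1}--\eqref{eq:x3} in $(\theta,a_s,b_t)$ by eliminating variables down to a degree-$5$ polynomial, invoking algebraic closedness of $\CC$, and separately analyzing the exceptional roots $x_3=\pm(c_1)^{-1/2}$; this covers every non-excluded tuple uniformly. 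Your substitute --- let $\Omega_0$ float as a fourth unknown, set $\theta=0$ to decouple, and handle "degenerate subcases" by "choosing $\theta\neq 0$ and selectively zeroing out $a_s$ or $b_t$" --- does not cover all non-excluded tuples, and on some of them every one of your recipes provably fails. Take $D_0=0$ with $B,C_0,C_1,D_1$ all nonzero, which the proposition does not exclude. After the substitutions above, the three remaining equations are $B=(\shq(1))^2(a_sb_t-\theta\Omega_0)$, $D_0=-a_s\Omega_0C_0/\chq(1)+\chq(1)(\shq(1))^2b_t\theta$ and $D_1=-b_t\Omega_0C_1/\chq(1)+\chq(1)(\shq(1))^2a_s\theta$. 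With $\theta=0$: $B\neq0$ forces $a_s,b_t\neq0$, then $D_0=0$ forces $\Omega_0=0$, and then $D_1=0$, a contradiction (this is precisely the degenerate instance $\Omega_0^2\propto D_0D_1=0$ of your square-root formula). With $\theta\neq0$ and $a_s=0$: $D_0=0$ forces $b_t=0$, and then $D_1=0$, contradiction. With $\theta\neq0$ and $b_t=0$: $B\neq0$ forces $\Omega_0\neq0$, then $D_0=0$ forces $a_s=0$, and then $D_1=0$, contradiction. The same failure occurs, e.g., for $B=0$ with $C_0,C_1,D_0,D_1\neq0$. Solutions do exist in these cases --- the paper's proof produces them --- but only by solving the coupled system, i.e., by exactly the quintic argument your decoupling was meant to replace. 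These cases are therefore not routine "bookkeeping"; they are the mathematical core of the proposition.

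A secondary remark: the paper proves the stronger statement that the realization exists inside $\UqO$ for an \emph{arbitrary} fixed $\Omega_0\neq0$, whereas your construction tunes $\Omega_0$ to the structure constants (even taking $\Omega_0=0$ when $D_0=D_1=0$). That weakening is still compatible with the existential wording of the proposition, but it is precisely this tuning that creates the uncovered cases above: once $\Omega_0$ is spent on decoupling the $D_0$ and $D_1$ equations, you have no freedom left to reconcile them with the $B$ equation when one of $D_0$, $D_1$, or $B$ vanishes in isolation.
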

\begin{proof}
	Let $B,C_0,C_1,D_0,D_1\in\RR$ such that $D_0$ or $D_1$ is not the only non-zero constant. We will show, using Theorem \ref{thm:tpeaw3}, that there exist $Y_K,Y_L\in\UqO$ which satisfy the AW-relation with parameters $B$, $C_0$, $C_1$, $D_0$, $D_1$. Let us fix $\Omega_0\neq0$ and take
\begin{gather}
b_{\hat{E}}=-\frac{C_0}{(\chq(1))^2b_{\hat{F}}},\qquad a_{\hat{F}}=-\frac{C_1}{(\chq(1))^2a_{\hat{E}}}\label{eq:0304pol1}
\end{gather}
	and
	\begin{gather}\label{eq:0304pol2}
		\theta=-(\shq(1))^{-2}\big(a_{\hat{E}}b_{\hat{F}}+a_{\hat{F}}b_{\hat{E}}\big)=-(\shq(1))^{-2}\left( a_{\hat{E}}b_{\hat{F}}+ \frac{C_0C_1}{(\chq(1))^4a_{\hat{E}}b_{\hat{F}}}\right).
	\end{gather}
	For arbitrary $C_0$, $C_1$, $\theta$, we can find $a_{\hat{E}}$, $a_{\hat{F}}$, $b_{\hat{E}}$, $b_{\hat{F}}$ that satisfy \eqref{eq:0304pol1} and \eqref{eq:0304pol2}. Therefore, we have to solve the following three equations
	\begin{gather*}
B=(\shq(1))^2(a_s b_t-\theta\Omega_0),\\
D_0=\chq(1)\left(-\frac{C_0a_s\Omega_0}{(\chq(1))^2} +(\shq(1))^2\theta b_t\right),\\
D_1=-\chq(1)\left(-\frac{C_1b_t\Omega_0}{(\chq(1))^2} +(\shq(1))^2\theta a_s\right),
\end{gather*}
	where we have three variables $\theta$, $a_s$, $b_t$. Let us assume that $C_0,C_1\neq0$. Then we can rewrite above equations to the following form,
	\begin{gather}
x_1-a_1x_2x_3=b_1,\label{eq:x1}\\
x_2-a_2x_1x_3=b_2,\label{eq:x2}\\
x_3-a_3x_1x_2=b_3,\label{eq:x3}
	\end{gather}
	with three variables $x_1=\theta$, $x_2=a_s$, $x_3=b_t$ and where $a_1,a_2,a_3\neq0$, since $C_0,C_1,\Omega_0\neq0$. We will show the above system is consistent. We can eliminate $x_1$ by substituting \eqref{eq:x1} into \eqref{eq:x2}. We obtain
	\begin{gather*}
x_2-a_2(b_1+a_1x_2x_3)x_3=b_2,\qquad
x_3-a_3(b_1+a_1x_2x_3)x_2=b_3,
	\end{gather*}
which we can rewrite to the system
	\begin{gather}
x_2-c_1x_2(x_3)^2=d_1,\label{eq:system21}\\
x_3-c_2(x_2)^2x_3=d_2,\label{eq:system22}
	\end{gather}
	where $c_1,c_2\neq0$, since $a_1$, $a_2$, $a_3$ are non-zero.
	The first equation implies
	\begin{gather}
		x_2=\frac{d_1}{1-c_1(x_3)^2}.\label{eq:x2sol}
	\end{gather}
	If $x_3\neq \pm (c_1)^{-1/2}$, we can substitute this into the second. We then get
	\begin{gather}
		\frac{(x_3-d_2)\big(1-c_1(x_3)^2\big)^2-c_2(d_1)^2x_3}{(1-c_1(x_3)^2)^2}=0.\label{eq:longpol}
	\end{gather}
	Since $c_1\neq 0$, the numerator is a polynomial of degree $5$ in $x_3$. It always has five (in general complex) roots, since $\CC$ is algebraically closed. If it has a root which is not of the form $x_3= \pm (c_1)^{-1/2}$, we can use~\eqref{eq:x2sol} to find $x_2$ and \eqref{eq:x1} to find $x_1$ and we are done. If it has a solution of the form $x_3= \pm (c_1)^{-1/2}$, it follows from~\eqref{eq:longpol} that
	\[
	\pm c_2(d_1)^2(c_1)^{-1/2}=0.
	\]
Since $c_1,c_2\neq0$, this implies $d_1=0$.
However, in that case the system \eqref{eq:system21} and \eqref{eq:system22} can be solved by taking $x_2=0$ and $x_3=d_2$.

	If $C_0=0$, $C_1=0$ or both, the system \eqref{eq:x1}--\eqref{eq:x3} becomes
	\begin{gather*}
		x_1-a_1x_2x_3=b_1,\qquad
		(1-\delta_{C_00})x_2-a_2x_1x_3=b_2,\qquad
		(1-\delta_{C_10})x_3-a_3x_1x_2=b_3.
	\end{gather*}
	This can be solved similarly as before, the only two exceptions being $C_0=C_1=b_1=b_2=0$, $b_3\neq0$ and $C_0=C_1=b_1=b_3=0$, $b_2\neq0$. If we are in one of those cases, say the first, we obtain
	\begin{gather}
		x_1-a_1x_2x_3=0,\label{eq:11}\\
		-a_2x_1x_3=0,\label{eq:22}\\
		-a_3x_1x_2=b_3.\label{eq:33}
	\end{gather}
	The second equation \eqref{eq:22} demands $x_1$ or $x_3$ to be zero, while the third one~\eqref{eq:33} requires $x_1$ and $x_2$ to be non-zero. Therefore, we need $x_3=0$. However, then~\eqref{eq:11} tells us that $x_1=0$ has to hold, which violates \eqref{eq:33}. This corresponds to the case $B=C_0=C_1=D_0=0$ and $D_1\neq 0$, for which no $Y_K$, $Y_L$ exist that generate the corresponding~$\aw$.
\end{proof}

\subsection[Reparametrizing AW(3)]{Reparametrizing $\boldsymbol{\aw}$}\label{sec:herdefAW}
In this subsection we will redefine the structure parameters $(B,C_0,C_1,D_0,D_1)$ of $\aw$ into a~way that is similar to~\eqref{eq:embeddingconstants}. We introduce new structure parameters $(A_0,\dots ,A_5)$ by
\begin{gather}
		B= (\shq(1))^2(A_0A_1 - A_2A_3),\qquad C_0=-(\chq(1))^2A_4,\qquad C_1=-(\chq(1))^2A_5,\nonumber\\
		D_0=\chq(1)(A_0A_3A_4+(\shq(1))^2A_1A_2), \nonumber\\
		D_1=\chq(1) (A_1A_3A_5+(\shq(1))^2A_0A_2).
\label{eq:nieuweA}
\end{gather}
Compared with \eqref{eq:embeddingconstants}, one should think of the correspondence
\[
a_s=A_0,\qquad b_t=A_1,\qquad \theta=A_2,\qquad \Omega=A_3.
\]
This notation is convenient for two reasons. First of all, similarly as in $\aw$ we have that the overlap coefficients of $Y_{K}$ and $Y_{L}$ are univariate Askey--Wilson polynomials~\cite{Groenevelt2021} with parameters depending explicitly on $a_s$, $b_t$, $\Omega$ and $\theta$. Looking at how these parameters appear in~\eqref{eq:embeddingconstants}, we will now see that the overlap coefficients of $K$ and $L$ are $q$-Racah polynomials that depend on $(A_k)_{k=0}^3$ in a simple way.

Secondly, we will see in the \bivariateawalgebra defined later on in Section \ref{sec:aw2}, that $B$, $D_0$ and $D_1$ are not necessarily central elements anymore, but $B$, $D_0$, $D_1$ have precisely the structure of~\eqref{eq:nieuweA}.

To see the convenience of the reparametrized structure parameters, let us consider a finite-dimensional representation from Section~\ref{ssec:representationsAW(3)}, where $N+1$ is the dimension of the representation and $C_0=C_1=-(\shq(2))^2$. Let $(\alpha_k)_{k=0}^3$ be real numbers and let
\begin{gather}
		A_0=\shq(\alpha_0),\qquad A_1=\shq(\alpha_1),\qquad A_2=\chq(\alpha_2),\nonumber\\
A_3=\chq(\alpha_3), \qquad A_4=A_5=(\shq(1))^2.\label{eq:Aalpha}
\end{gather}
Then one can show that $(p_k)_{k=0}^3$ and $(\alpha_k)_{k=0}^3$ relate via a simple linear transformation.
\begin{Proposition}
	Let $C_0=C_1=-(\shq(2))^2$, then we have 	
	\begin{gather}
		p_0=\alpha_0+\alpha_3,\qquad p_1=\alpha_0-\alpha_3,\qquad p_2=\alpha_1+\alpha_2,\qquad p_3=\alpha_1 -\alpha_2,\label{eq:pkak}
	\end{gather}
where $\alpha_3=-N-1$ with $N\in\NN$ the dimension of the representation and $\alpha_0+\alpha_3$ is the starting parameter of the spectrum of $K$.
\end{Proposition}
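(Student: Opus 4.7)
The claim amounts to verifying that two different parametrizations of the structure constants $B$, $D_0$, $D_1$ agree: the one coming from substituting \eqref{eq:Aalpha} into \eqref{eq:nieuweA}, and the one obtained from Proposition~\ref{prop:BDQpk} with $p_k$ as in \eqref{eq:pkak}. The plan is to carry out this comparison by matching symmetric functions of the roots, using throughout the identity $\shq(2)=\chq(1)\shq(1)$, which yields both $(\chq(1))^2(\shq(1))^2=(\shq(2))^2$ and $(\shq(2))^2/\chq(1)=(\shq(1))^2\chq(1)$.

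First I would substitute \eqref{eq:Aalpha} into \eqref{eq:nieuweA}. The choice $A_4=A_5=(\shq(1))^2$ immediately gives $C_0=C_1=-(\shq(2))^2$, as required, and the remaining constants become
\[
B=(\shq(1))^2\bigl(\shq(\alpha_0)\shq(\alpha_1)-\chq(\alpha_2)\chq(\alpha_3)\bigr),
\]
\[
D_0=(\shq(1))^2\chq(1)\bigl(\shq(\alpha_0)\chq(\alpha_3)+\shq(\alpha_1)\chq(\alpha_2)\bigr),
\]
and analogously $D_1=(\shq(1))^2\chq(1)(\shq(\alpha_1)\chq(\alpha_3)+\shq(\alpha_0)\chq(\alpha_2))$. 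Then, with $p_k$ as in \eqref{eq:pkak}, the half-sums and half-differences appearing in Proposition~\ref{prop:BDQpk} collapse to $(p_0\pm p_1)/2=\alpha_0,\alpha_3$ and $(p_2\pm p_3)/2=\alpha_1,\alpha_2$, so a direct substitution reproduces exactly the same expressions for $B$, $D_0$, $D_1$. Since these structure constants together with the Casimir value $Q_0$ determine the characteristic polynomial $\mathscr{P}$ of \eqref{eq:awpol}, and hence its roots, this already identifies the four $p_k$ of \eqref{eq:pkak} as the roots of~$\mathscr{P}$ up to labeling.

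Finally I would fix the labeling. The quantization condition $2(N+1)=p_1-p_0$ recalled after \eqref{eq:spectrumrecursiverelations} forces $-2\alpha_3=2(N+1)$ via \eqref{eq:pkak}, giving $\alpha_3=-(N+1)$, while identifying $p_0=\alpha_0+\alpha_3$ as the starting parameter of the spectrum of $K$ is just \eqref{eq:3vd5} at $n=0$. The one subtle point, and the main obstacle in the argument, is that the formulas of Proposition~\ref{prop:BDQpk} are symmetric under $p_0\leftrightarrow p_1$, under $p_2\leftrightarrow p_3$, and under exchanging the two pairs; the labeling of $p_0$ and $p_1$ is pinned down by the quantization condition and the spectrum start, while the labeling within $\{p_2,p_3\}$ is a convention made in \eqref{eq:pkak}.
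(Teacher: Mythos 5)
Your core computation coincides with the paper's: both proofs rest on comparing the expressions for $B$, $D_0$, $D_1$ in Proposition~\ref{prop:BDQpk} with the reparametrization \eqref{eq:nieuweA}--\eqref{eq:Aalpha} via $\shq(2)=\chq(1)\shq(1)$, and both invoke the quantization condition and the spectrum of $K$; you merely run the comparison in the verification direction (substitute \eqref{eq:pkak} and check) where the paper runs it in the derivation direction (match terms and read off $p_0+p_1=2\alpha_0$, $p_0-p_1=2\alpha_3$, etc.). The explicit formulas you obtain on both sides are correct.

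However, the sentence doing the logical work in your middle step is flawed. You argue that since the candidates \eqref{eq:pkak} reproduce $B$, $D_0$, $D_1$ through the formulas of Proposition~\ref{prop:BDQpk}, and since these constants together with $Q_0$ determine $\mathscr{P}$ in \eqref{eq:awpol}, the candidates are ``already'' identified as the roots of $\mathscr{P}$ up to labeling. That does not follow: the three relations you verify are three equations in the four quantities $\shq\bigl(\frac{p_0+p_1}{2}\bigr)$, $\chq\bigl(\frac{p_0-p_1}{2}\bigr)$, $\shq\bigl(\frac{p_2+p_3}{2}\bigr)$, $\chq\bigl(\frac{p_2-p_3}{2}\bigr)$, so their solution set is generically a one-parameter family, not a finite collection of relabelings, and matching $B$, $D_0$, $D_1$ alone cannot single out the root tuple. (Your route through $\mathscr{P}$ would additionally require verifying the $Q_0$ formula of Proposition~\ref{prop:BDQpk} for the candidates, which you never do and which is not available from \eqref{eq:nieuweA} alone.) The repair is precisely the role these conditions play in the paper: the quantization condition $p_1-p_0=2(N+1)$ and the requirement that the spectrum of $K$ starts at $p_0$ fix $p_0$ and $p_1$ outright --- they are needed for uniqueness, not merely to ``fix the labeling'' as your last paragraph has it. Once $p_0$, $p_1$ are pinned down, the relations for $B$ and $D_1$ form a linear system for $\shq\bigl(\frac{p_2+p_3}{2}\bigr)$ and $\chq\bigl(\frac{p_2-p_3}{2}\bigr)$ with nonvanishing determinant $\shq(\alpha_0)^2+\chq(N+1)^2$, which determines the pair $\{p_2,p_3\}$ uniquely (the $D_0$ relation being a consistency check); only the choice of order within that pair is the genuine convention you correctly identify.
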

\begin{proof}
The structure parameters $B$, $D_0$ and $D_1$ can be written in terms of the parameters $(p_k)_{k=0}^3$, see Proposition~\ref{prop:BDQpk}. Together with requiring that the spectrum of $K$ is of the form
	\[
	\shq(2n+p_0),\qquad n=0,\ldots,N,
	\]
	and the quantization condition $p_1-p_0=2(N+1)$ this uniquely determines the roots $(p_k)_{k=0}^3$. We can easily compare the expressions for $B$, $D_0$, $D_1$ from Proposition~\ref{prop:BDQpk} with~\eqref{eq:nieuweA}, using that $\shq(2)=\chq(1)\shq(1)$. This gives
	\begin{gather*}
		p_0+p_1=2\alpha_0,\qquad p_2+p_3=\alpha_1,\qquad p_0-p_1=2\alpha_3,\qquad p_2-p_3=2\alpha_2.
	\end{gather*}
	Rewriting this leads to \eqref{eq:pkak}.
\end{proof}

Furthermore, the condition for finite-dimensional representations given by $p_1-p_0=2(N+1)$, now leads to $\alpha_3=-(N+1)$.
Consequently, substituting this into \eqref{eq:3vd5} gives that the spectrum of $K$ can be written as
\begin{gather}
	\lambda_{n(k)} = \shq(2n(k)+\alpha_0),\qquad n(k)=k-\frac{N}{2},\qquad k\in\{0,\dots,N\},\label{eq:nieuwspectrumK}
\end{gather}
where $k$ will be the degree of the resulting $q$-Racah polynomial. Similarly, the spectrum of $L$ is given by
\begin{gather}
	\mu_{m(j)} = \shq(2m(j)+\alpha_1),\qquad m(j)=j-\frac{N}{2},\qquad j\in\{0,\dots,N\},\label{eq:nieuwspectrumL}
\end{gather}
where $j$ will determine the variable of the $q$-Racah polynomial.
Since the form of the spectra will remain the same in the rank 2 case, we define
\begin{gather}
\lambda_x=\shq(2x+\alpha_0)\qquad\text{and}\qquad \mu_x=\shq(2x+\alpha_1).\label{eq:eigenvalues}
\end{gather}
The parameters $A_4$ and $A_5$ are similar to the roles of $C_0$ and $C_1$. That is, by rescaling the generators $K$ and $L$ we can assign any value, while keeping the same sign, to $A_4$ and $A_5$. This sign determines the form of the spectrum: positive leads to $\shq$, while negative gives $\chq$.

In summary, we have the following interpretation of the new structure parameters:
\begin{itemize}\itemsep=0pt
	\item $\alpha_0\in\RR$ determines the spectrum of $K$,
	\item $\alpha_1\in\RR$ determines the spectrum of $L$,
	\item $\alpha_3\in\ZZ_+$ determines the dimension of the representation,
	\item The sign of $A_4$ determines the form of the spectrum of $L$,
	\item The sign of $A_5$ determines the form of the spectrum of $K$.
\end{itemize}
The overlap coefficients of the eigenfunctions of $K$ and $L$ are $q$-Racah polynomials with parameters that depend in a simple way on $(p_k)_{k=0}^3$ and the $(p_k)_{k=0}^3$ are in turn related to the new structure parameters $(\alpha_k)_{k=0}^3$ via~\eqref{eq:pkak}. Therefore, we see that the $q$-Racah polynomials arising from the overlap coefficients of $K$ and $L$ can be simply computed from $(\alpha_k)_{k=0}^3$.
\begin{Corollary}
	Let $P_{n(k)}(m(j))$ be the overlap coefficients from \eqref{eq:overlap}. If $\alpha_3=-N-1$ with $N\in\NN$, we have
\[
P_{n(k)}(m(j))=R_k\big(y_j;\alpha,\beta,\gamma,\delta;q^2\big),\qquad k,j=0,\dots,N.
\]
	Here, $R_k(y_j;\alpha,\beta,\gamma,\delta;q)$ are the $q$-Racah polynomials from \eqref{eq:defqracah} and
\begin{gather}
\alpha=-q^{\alpha_0+\alpha_1+\alpha_2+\alpha_3},\qquad \beta=q^{\alpha_0-\alpha_1-\alpha_2+\alpha_3},\qquad \gamma=q^{2\alpha_3},\qquad \delta=-q^{2\alpha_1}. \label{eq:qracahparnew}
\end{gather}
\end{Corollary}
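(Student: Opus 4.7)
The proof is essentially an assembly of earlier results together with an index-matching check. The plan is as follows.

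First, I would verify that, under the assumption $\alpha_3=-N-1$, the reparametrized spectrum \eqref{eq:nieuwspectrumK} coincides with the original spectrum \eqref{eq:3vd5} under the identification $n(k)\mapsto k$. From the preceding Proposition we have $p_0=\alpha_0+\alpha_3=\alpha_0-N-1$, so
\[
\lambda_n=\shq(2n+p_0+1)=\shq(2n+\alpha_0-N)=\shq\!\left(2\!\left(n-\tfrac{N}{2}\right)+\alpha_0\right)=\lambda_{n(k)}
\]
precisely when $k=n$. The analogous computation for $L$ (with $\mu_m$ and $m(j)$) gives $j=m$. Thus the eigenvector labelled by $n$ in Section~\ref{ssec:representationsAW(3)} is the eigenvector labelled by $n(k)$ with $k=n\in\{0,\dots,N\}$, and similarly for $L$. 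In particular the overlap $P_{n(k)}(m(j))$ in the new labelling equals the overlap $P_n(m)$ of \eqref{eq:overlap} with $n=k$, $m=j$.

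Second, I would invoke the identification of $P_n(m)$ with the $q$-Racah polynomial $R_n(y_m;\alpha,\beta,\gamma,\delta;q^2)$ obtained just before this Corollary, whose parameters are given in terms of $(p_k)_{k=0}^3$ by \eqref{eq:qracahpar}. Then a direct substitution of \eqref{eq:pkak} yields
\begin{gather*}
\alpha=-q^{p_0+p_2}=-q^{(\alpha_0+\alpha_3)+(\alpha_1+\alpha_2)}=-q^{\alpha_0+\alpha_1+\alpha_2+\alpha_3},\\
\beta=q^{p_0-p_2}=q^{(\alpha_0+\alpha_3)-(\alpha_1+\alpha_2)}=q^{\alpha_0-\alpha_1-\alpha_2+\alpha_3},\\
\gamma=q^{p_0-p_1}=q^{2\alpha_3},\qquad \delta=-q^{p_2+p_3}=-q^{2\alpha_1},
\end{gather*}
which is exactly \eqref{eq:qracahparnew}. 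Combining this with the identification from the first step gives the claimed formula $P_{n(k)}(m(j))=R_k(y_j;\alpha,\beta,\gamma,\delta;q^2)$.

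There is no substantial obstacle: every ingredient is already at hand. The only place where care is required is the bookkeeping of the two competing indexings (the original $n$, $m$ from Section~\ref{ssec:representationsAW(3)} versus the shifted $n(k)$, $m(j)$ introduced for the reparametrization), and in particular verifying that the quantization condition $p_1-p_0=2(N+1)$ coming from $a_0=a_{N+1}=0$ is exactly the condition $\alpha_3=-N-1$ that appears in the hypothesis. This makes the two labellings compatible and lets the substitution of \eqref{eq:pkak} into \eqref{eq:qracahpar} go through unchanged.
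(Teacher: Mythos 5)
Your proposal is correct and takes essentially the same approach as the paper, whose entire proof is the one-line observation that the result is \eqref{eq:qracahpar} combined with \eqref{eq:pkak} --- your parameter computations are exactly that substitution. The extra work you do (matching the spectrum \eqref{eq:3vd5} with \eqref{eq:nieuwspectrumK} to identify $n=k$, $m=j$, and checking that the quantization condition $p_1-p_0=2(N+1)$ is equivalent to $\alpha_3=-N-1$) is just a careful spelling-out of the index bookkeeping the paper leaves implicit.
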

\begin{proof}
This is just \eqref{eq:qracahpar} combined with \eqref{eq:pkak}.
\end{proof}

Similar to \eqref{eq:orthogonalityqracah}, we have
\begin{gather*}
	\delta_{n(k),n(k')}=\sum_{j=0}^N w(j,k)P_{n(k)}(m(j))\overline{P_{n(k')}(m(j))},
\end{gather*}
where
\begin{gather}
w(j,k):=w(j,k,\alpha_0,\alpha_1,\alpha_2,\alpha_3;q) =\frac{\rho\big(j,\alpha,\beta,\gamma,\delta;q^2\big)}{h_n\big(\alpha,\beta,\gamma,\delta:q^2\big)}.\label{eq:weightqracahnew}
\end{gather}
Here, the parameters $(\alpha_k)_{k=0}^3$ are related to $(\alpha,\beta,\gamma,\delta)$ via \eqref{eq:qracahparnew} and the functions $\rho$ and $h_n$ are given in \eqref{eq:qracahweight} and \eqref{eq:qracahnorm} respectively.

Moreover, $a_n$ and $b_n$ from \eqref{eq:anbntri} depend explicitly on $\alpha_0$, $\alpha_1$, $\alpha_2$, $N$. This can be seen by substituting \eqref{eq:nieuweA} and \eqref{eq:Aalpha} into formula \eqref{eq:5vd5} for $b_n$ and \eqref{eq:pkak} into formula \eqref{eq:4vd5} for $a_n$. We can simplify the expression for $b_n$ even more by using
\begin{gather}
	\lambda_n-\lambda_{n-1}=\shq(1)\chq(2n+\alpha_0-1).\label{eq:difference eigenvalues}
\end{gather}
Because of the result after substitution, for $\bm{\alpha}=(\alpha_0,\alpha_1,\alpha_2,\alpha_3)\in\RR^4$ we define
\begin{gather}
a_n^2(\bm{\alpha})=\frac{-\prod_{k=0}^3(\shq(2n+\alpha_0-1)-\shq(p_k))}{\chq(2n+\alpha_0-1)^2\chq(2n+\alpha_0)\chq(2n+\alpha_0-2)},\label{eq:annew}\\
b_n(\bm{\alpha})=\frac{ (A_1 -A_2A_3)\lambda_n+\chq(1) (A_1A_3+A_0A_2)}{\chq(2n+\alpha_0-1)\chq(2n+\alpha_0+1)},\label{eq:bnnew}
\end{gather}
where $(p_k)_{k=0}^3$ and $(A_k)_{k=0}^3$ depend on $(\alpha_k)_{k=0}^3$ via \eqref{eq:pkak} and \eqref{eq:Aalpha} respectively. When $C_0>0$ and $C_1<0$, the formulas for $a_n^2$ and $b_n$ change slightly. For $a_n^2$, we have to remove the minus sign in front of the product in the numerator and interchange $\alpha_1\leftrightarrow \alpha_2$. The formula for $b_n$ changes in the sense that $(A_k)_{k=0}^3$ depend on $(\alpha_k)_{k=0}^3$ in a different way. Therefore we define
\begin{gather}
	\tilde{a}_n^2(\tilde{\alpha}_0,\tilde{\alpha}_1,\tilde{\alpha}_2,\tilde{\alpha}_3) =-a_n^2(\tilde{\alpha}_0,\tilde{\alpha_2},\tilde{\alpha_1},\tilde{\alpha}_3),\label{eq:annewtilde}\\
	\tilde{b}_n(\bm{\alpha})=\frac{ \big(\tilde{A}_1 -\tilde{A}_2\tilde{A}_3\big)\lambda_n+\chq(1) \big(\tilde{A}_1\tilde{A}_3+\tilde{A}_0\tilde{A}_2\big)}{\chq(2n+\alpha_0-1)\chq(2n+\alpha_0+1)},\label{eq:bnnewtilde}
\end{gather}
where
\begin{gather*}
	\tilde{A}_0=\shq(\alpha_0),\qquad \tilde{A}_1=\chq(\alpha_1),\qquad \tilde{A}_2=\shq(\alpha_2),\qquad \tilde{A}_3=\chq(\alpha_3).\label{eq:Aalphatilde}
\end{gather*}

To end this section let us make the connection with the Askey--Wilson algebra as defined in~\cite{CFGPRV2021}.
Let us write
\[
\Lambda_1=A_1, \qquad \Lambda_2=A_3, \qquad \Lambda_3=A_0,\qquad
\Lambda_{12} = L, \qquad \Lambda_{23}=K, \qquad \Lambda_{123}=-A_2,
\]
and set $A_4=A_5=-(\shq(1))^2$. Define
\[
\Lambda_{13}= -\frac{[\Lambda_{12},\Lambda_{23}]_q}{\shq(2)} + \frac{\Lambda_1 \Lambda_3+ \Lambda_2 \Lambda_{123}}{\chq(1)},
\]
then the AW-relations \eqref{eq:awrelation2} and \eqref{eq:awrelation1} can be written as
\begin{gather*}
	\Lambda_{12}= -\frac{[\Lambda_{23},\Lambda_{13}]_q}{\shq(2)} + \frac{\Lambda_2 \Lambda_3+ \Lambda_1 \Lambda_{123}}{\chq(1)}, \qquad
	\Lambda_{23}= -\frac{[\Lambda_{13},\Lambda_{12}]_q}{\shq(2)} + \frac{\Lambda_1 \Lambda_2+ \Lambda_3 \Lambda_{123}}{\chq(1)}.
\end{gather*}
Moreover, for the Askey--Wilson algebra inside $\Uq$ generated by $Y_L$ and $Y_k$ it follows from \cite[Theorem~2.17]{Terwilliger2011a} that the corresponding Casimir element $Q$ \eqref{eq:casimiraw3} of the Askey--Wilson algebra can be simplified to
\[
Q= (\chq(1))^2-\Lambda_{123}^2 - \Lambda_1^2-\Lambda_2^2 - \Lambda_3^2 - \Lambda_{123}\Lambda_1\Lambda_2\Lambda_3.
\]
This implies that $Y_L$ and $Y_K$ generate a \textit{special} Askey--Wilson algebra, as defined in \cite[Section~2.1]{CFGPRV2021}, inside~$\Uq$.

\section[Construction of the rank 2 Askey-Wilson algebra AW\_2]{Construction of the \bivariateawalgebra $\boldsymbol{\AW{2}}$}\label{sec:aw2}

\subsection[Coproducts of Y\_K and Y\_L and the Askey-Wilson algebra]{Coproducts of $\boldsymbol{Y_K}$ and $\boldsymbol{Y_L}$ and the Askey--Wilson algebra}\label{subsec:coproduc}

In \cite{Groenevelt2021}, it was shown that the overlap coefficients of coproducts of $Y_{K}$ and $Y_{L}$ are multivariate Askey--Wilson polynomials. Moreover, $q$-difference operators for which these multivariate Askey--Wilson polynomials are eigenfunctions can be realized by these coproducts of twisted primitive elements. One can check that these coproducts also satisfy the AW-relations (Theorem~\ref{thm:tabel}). This then will be our motivation for the defining relations of the \bivariateawalgebra $\AW{2}$.

Let us consider the following elements in $\Uq\otimes\Uq$,
\begin{gather*}
	1\otimes Y_{K},\quad \Delta(Y_{K}),\quad Y_{L}\otimes 1,\quad \Delta(Y_{L})\quad \text{and}\quad \Delta(\Omega).
\end{gather*}
We want to know how these elements relate to each other. Two easy observations are that $1\otimes Y_{K}$ and $Y_{L}\otimes 1$ commute and that $\Delta(Y_{K})$ and $\Delta(Y_{L})$ satisfy the AW-relations, since $\Delta$ is an algebra homomorphism. That is,
\begin{gather*}
\chq(2)\Delta(Y_{L})\Delta(Y_{K})\Delta(Y_{L}) - \Delta(Y_{L})^2\Delta(Y_{K})-\Delta(Y_{K})\Delta(Y_{L})^2 \\
\qquad{}=\Delta(B)\Delta(Y_{L}) + C_0\Delta(Y_{K}) + \Delta(D_0), \\
\chq(2)\Delta(Y_{K})\Delta(Y_{L})\Delta(Y_{K}) - \Delta(Y_{K})^2\Delta(Y_{L})-\Delta(Y_{L})\Delta(Y_{K})^2\\
\qquad{}=\Delta(B)\Delta(Y_{K}) + C_1\Delta(Y_{L}) + \Delta(D_1),
\end{gather*}
where $B$, $C_0$, $C_1$, $D_0$, $D_1$ can be found in \eqref{eq:embeddingconstants}. Notice that $C_0$ and $C_1$ are constants, thus
\[
\Delta(C_0)=C_0 (1\otimes 1)\ \text{ and }\ \Delta(C_1)=C_1 (1\otimes 1).
\]
However, $\Delta(B)$, $\Delta(D_0)$ and $\Delta(D_1)$ are not central elements of $\Uq\otimes \Uq$ anymore due to the appearance of $\Delta(\Omega)$. They do not even commute with our four generators,
since $\Delta(\Omega)$ does not commute with $1\otimes Y_{K}$ or $Y_{L}\otimes 1$. We do see that $\Delta(\Omega)$ `locally' commutes, i.e., it commutes with~$\dyzero$ and~$\dyone$, the elements of the AW-relations it appears in as a structure parameter. Also, we already noted that~$\yzero$ and $\dyzero$ commute and $\yone$ and $\dyone$ as well because of~\eqref{eq:coproducttpe}.

Let us now study how $\yone$ and $\dyzero$ relate. It turns out that $(\yone,\dyzero)$ satisfy the $\aw$ relations~\eqref{eq:awrelation2} and~\eqref{eq:awrelation1} where the structure `parameters' contain the `locally' commuting operators $\dyone$ and $\yzero$. A similar result is (non surprisingly) true for $(\yzero,\dyone)$. Also, both couples $(\Delta(\Omega),\yzero)$ and $(\Delta(\Omega),\yone)$ satisfy the AW-relations. This is summarized in the next theorem.
\begin{Theorem}\label{thm:tabel}
	Each pair of the elements $\yzero,\dyzero,\yone,\dyone,\Delta(\Omega)\in\Uq\otimes\Uq$ either commutes or satisfies the AW-relations \eqref{eq:awrelation2} and \eqref{eq:awrelation1}. The pairs of non-commuting elements satisfy these relations with the structure parameters in Table~{\rm \ref{tab:tpegen}}.
	\begin{table}[h]\centering\renewcommand{\arraystretch}{1.2}\small
		\begin{tabular}{|l|l|l|l|l|l|l|l|}\hline
			\text{Generator 1} & \text{Generator 2} & $A_0$ & $A_1$ & $A_2$ & $A_3$ & $A_4$ & $A_5$ \\ \hline
			$\dyzero$ & $\dyone$ & $a_s$ & $b_t$ & $\theta$ & $\Delta\left(\Omega\right)$ & $b_{\hat{E}}b_{\hat{F}}$ & $a_{\hat{E}}a_{\hat{F}}$ \\ \hline
			$\yzero$ & $\dyone$ & $a_s$ & $\yone$ & $\theta$ & $1\otimes\Omega$ & $b_{\hat{E}}b_{\hat{F}}$ & $a_{\hat{E}}a_{\hat{F}}$ \\ \hline
			$\dyzero$ & $\yone$ & $\yzero$ & $b_t$ & $\theta$ & $\Omega\otimes 1$ & $b_{\hat{E}}b_{\hat{F}}$ & $a_{\hat{E}}a_{\hat{F}}$ \\ \hline
			$\yzero$ & $\Delta\left(\Omega\right)$ & $a_s$ & $\Omega\otimes 1$ & $-\dyzero$ & $1\otimes\Omega$ & $-\shq(1)^{2}$ &$a_{\hat{E}}a_{\hat{F}}$ \\ \hline
			$\Delta\left(\Omega\right)$ & $\yone$ & $1\otimes\Omega$ & $b_t$ & $-\dyone$ & $\Omega\otimes 1$ & $ b_{\hat{E}}b_{\hat{F}}$ &$-\shq(1)^{2}$ \\ \hline
		\end{tabular}
		\caption{Askey--Wilson algebra relations of twisted primitive elements.}\label{tab:tpegen}
	\end{table}
\end{Theorem}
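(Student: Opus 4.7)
The plan is to verify the five rows of Table~\ref{tab:tpegen} one at a time; each row asserts that the given pair satisfies the AW-relations \eqref{eq:awrelation2}--\eqref{eq:awrelation1}, with parameters $(B,C_0,C_1,D_0,D_1)$ obtained from the tabulated $(A_0,\dots,A_5)$ via the reparametrization \eqref{eq:nieuweA}. Since some of the tabulated $A_i$ are operators rather than scalars, I would also verify in each case that those non-scalar entries lie in the commutant of the two generators of the pair, so that they can legitimately play the role of structure parameters.

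The first row is immediate from Theorem~\ref{thm:tpeaw3}. Applying the algebra homomorphism $\Delta$ to the AW-relations satisfied by $(Y_K,Y_L)$ in $\Uq$ produces AW-relations for $(\Delta(Y_K),\Delta(Y_L))$, and under \eqref{eq:nieuweA} the scalar parameters $a_s,b_t,\theta,a_{\hat{E}}a_{\hat{F}},b_{\hat{E}}b_{\hat{F}}$ are untouched by $\Delta$, while only $A_3=\Omega$ is replaced by $\Delta(\Omega)$. That is exactly the first row.

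Rows~2 and~3 are handled symmetrically by direct computation. Take row~2, the pair $(1\otimes Y_K,\Delta(Y_L))$. First note that $Y_L\otimes 1$ commutes with $1\otimes Y_K$ trivially and with $\Delta(Y_L)$ by \eqref{eq:coproducttpe}, and that $1\otimes\Omega$ commutes with both since $\Omega$ is central in $\Uq$; these two elements can therefore play the roles of $A_1$ and $A_3$. Next, I use the splitting $\Delta(Y_L)=1\otimes(Y_L-b_t\hat{K}^{-2})+Y_L\otimes\hat{K}^{-2}$ from \eqref{eq:coproducttpe} to expand $[1\otimes Y_K,\Delta(Y_L)]_q$ tensor-factor by tensor-factor, and iterate once more to form the cubic expressions in \eqref{eq:awrelation2}--\eqref{eq:awrelation1}. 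The resulting terms split into those supported in the right tensor factor, which collapse by the AW-relations for $(Y_K,Y_L)$ from Theorem~\ref{thm:tpeaw3}, and those of the form $Y_L\otimes(\,\cdot\,)$ and $1\otimes(\,\cdot\,)$ which produce the dependence on $A_1=Y_L\otimes 1$ and $A_3=1\otimes\Omega$. Matching with \eqref{eq:nieuweA} for the tabulated $A_i$ yields row~2, and row~3 follows by the same argument with the two tensor factors swapped.

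Rows~4 and~5 use the same strategy: expand $\Delta(\Omega)$ via the explicit Casimir formula, check that the tabulated non-scalar entries (notably $\Delta(Y_K)$ and $\Delta(Y_L)$ in the role of $A_2$, and $\Omega\otimes 1$, $1\otimes\Omega$ in the roles of $A_1,A_3$) lie in the commutant of the relevant pair --- $\Delta(Y_K)$ commutes with $1\otimes Y_K$ by the remark following \eqref{eq:coproducttpe} and with $\Delta(\Omega)$ by centrality of $\Omega$, analogously for $\Delta(Y_L)$ --- and compute the cubic $q$-commutators. The specialization $A_4=-\shq(1)^2$ (resp.\ $A_5=-\shq(1)^2$) in the appropriate row reflects the quadratic $\hat{E}\hat{F}$ part of $\Omega$, which is what plays the role of $C_0$ or $C_1$. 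The main obstacle across rows~2--5 is purely computational: one has to track many non-commuting monomials across both tensor factors and apply the relations of $\Uq$ repeatedly. These calculations are mechanical but voluminous, which is why the author defers them to Appendices~\ref{app:uquqbivaraw} and~\ref{app:calculationsrelationsAW4}.
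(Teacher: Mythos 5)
Your proposal is correct and follows essentially the same route as the paper's own proof: row 1 via the homomorphism property of $\Delta$ applied to Theorem~\ref{thm:tpeaw3}, the remaining rows by symmetry in pairs and by direct computation in $\Uq\otimes\Uq$, splitting the coproducts via \eqref{eq:coproducttpe} (and the explicit formula for $\Delta(\Omega)$) so that each resulting cross term is an instance of Theorem~\ref{thm:tpeaw3} with specialized (possibly degenerate) coefficients, while checking local centrality of the non-scalar structure parameters. This matches the paper's Appendix~\ref{app:uquqbivaraw} computation, including the deferral of the lengthy bookkeeping.
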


\begin{proof}This is a direct computation in $\Uq\otimes\Uq$. For readability, these calculations can be found in Appendix~\ref{app:uquqbivaraw}.
\end{proof}

\begin{Remark}
	Let us consider the tensor product of two irreducible representations of $\mathcal U_q$. Clebsch-Gordan coefficients for a basis of eigenvectors of $\Delta(Y_K)$ which are also eigenvectors of $1 \otimes Y_K$, can be obtained from diagonalizing $\Delta(\Omega)$ acting on such eigenvectors. From Table~\ref{tab:tpegen}, we see that $\Delta(\Omega)$ and $1 \otimes Y_K$ generate an Askey--Wilson algebra, and note that the structure `parameters' $A_1=\Omega\otimes 1$, $A_2=\Delta(Y_K)$ and $A_3=1\otimes \Omega$ act as multiplication operators in this case. As a consequence, see Section~\ref{ssec:q-Racah pol}, the Clebsch-Gordan coefficients are essentially $q$-Racah polynomials. Using this reasoning in the case of tensor products of (infinite-dimensional) irreducible representations of $\mathcal U_q(\mathfrak{su}(1,1))$, one expects the Clebsch-Gordan coefficients for simultaneous eigenvectors of $\Delta(Y_K)$ and $1\otimes Y_K$ to be eigenfunctions of an Askey--Wilson $q$-difference operator. Indeed, in the case of a tensor product of discrete series representations of $\mathcal U_q(\mathfrak{su}(1,1))$ the Clebsch-Gordan are essentially Askey--Wilson polynomials, see \cite{Groenevelt2004, KvdJ}.
\end{Remark}

\subsection[Defining the rank 2 Askey-Wilson algebra AW\_2]{Defining the \bivariateawalgebra $\boldsymbol{\AW{2}}$}

Inspired by Theorem \ref{thm:tabel}, we will define the \bivariateawalgebra $\AW{2}$. Constructing this algebra will be done without any dependence on $\Uq$. We will introduce elements in $\AW{2}$ which resemble $Y_K$, $Y_L$, $\Omega$ in $\Uq$ as well as their coproducts. Defining a comultiplication is not needed in $\AW{2}$. This construction of $\AW{2}$ will proceed in three steps. First, we will set up two original $\aw$ algebras and define the algebra $\mathscr{A}$ to be their tensor product. Then, we let $\awc{1}{M_2}$ be the Askey--Wilson algebra that resembles the first row of Table~\ref{tab:tpegen}. Lastly, we will combine these algebras and define $\AW{2}$ as the algebra generated\footnote{This is the free product algebra of the algebras $\mathscr{A}$ and $\mathscr{B}$ subject to the extra relations.} by the two algebras~$\mathscr{A}$ and~$\awc{1}{M_2}$ subject to the Askey--Wilson algebra relations that appear in rows 2--5 of Table~\ref{tab:tpegen}.

To have a short notation for when two elements of an algebra satisfy the AW-relations, we will introduce the function $\awr$.
\begin{Definition}
	Let $A$ be an algebra over $\CC$ and $A^{ n}$ be the $n$-ary Cartesian product of $A$. Define the function $\awr\colon A^{8}\to A^{2}$ by
	\begin{gather*}
		\awr(K,L\mid A_0,A_1,A_2,A_3,A_4,A_5)=\left(\begin{matrix}\chq(2) KLK - K^2L -LK^2 \\ \chq(2)LKL - L^2K -KL^2 \end{matrix}\right) \\
		-\left(\begin{matrix}	(\shq(1))^2(A_0A_1\! -\!A_2A_3)K \!-\! (\chq(1))^2A_5L \!+\! \chq(1)(A_1A_3A_5\!+\!(\shq(1))^2A_0A_2)		\\ (\shq(1))^2(A_0A_1\! -\!A_2A_3)L \!-\!(\chq(1))^2A_4K \!+\!\chq(1)(A_0A_3A_4\!+\!(\shq(1))^2A_1A_2) \end{matrix}\right)
	\end{gather*}
	for $K,L,A_0,A_1,A_2,A_3,A_4,A_5\in A$. We then call $K$, $L$ the generators and $(A_k)_{k=0}^5$ the structure parameters. Moreover, we say that the structure parameters are locally central if $(A_k)_{k=0}^5$ commute with each other as well as with the generators $K$ and $L$ of that relation. That is,
	\begin{gather*}
		[A_k,A_l]=[A_k,K]=[A_k,L]=0 \qquad \text{for all} \quad k,l\in\{0,1,2,3,4,5\},
	\end{gather*}
	where $[\cdot,\cdot]$ is the regular commutation bracket defined by
	\[
	[X,Y]=XY-YX.
	\]
\end{Definition}
\begin{Remark}
	We have that $\awr(K,L|A_0,A_1,A_2,A_3,A_4,A_5)=0$ if and only if two elements $K$,~$L$ of an algebra satisfy the AW-relations with parameters $A_0,\dots, A_5$.
\end{Remark}
From now on, fix constants $A_0,A_1,A_2,A_{N_1},A_{N_2},\sigma_K,\sigma_L\in\RR$. Later on, $A_{N_i}$ will determine the dimension of the vector space $V_i$ by taking
\[
A_{N_i}=\chq(-N_i-1).
\]
Let us first introduce the two original $\aw$ algebras. For $i\in\{1,2\}$, let $\awc{1}{A_{N_i}}$ be the algebra generated by $K_1$ and $L_1$ subject to the relations
\[
\awr(K_1,L_1\mid A_0,A_1,A_2,A_{N_i},\sigma_L,\sigma_K)=0.
\]
Then we let
\[
\mathscr{A}=\awc{1}{A_{N_1}}\otimes\awc{1}{A_{N_2}}.
\]
Secondly, we need the Askey--Wilson algebra $\awc{1}{M_2}$. This is the algebra generated by~$K_2$,~$L_2$ and $M_2$ subject to the relations
\begin{gather}
	\awr(K_2,L_2\mid A_0,A_1,A_2,M_2,\sigma_L,\sigma_K)=0,\label{eq:aw1(M2)}
\end{gather}
where $M_2$ is a central element of $\awc{1}{M_2}$. In the setting of Section~\ref{subsec:coproduc}, $M_2$ would be the coproduct of the Casimir of $\Uq$, which is central with respect to algebra $\Delta(\Uq)$. Now we are ready to define $\AW{2}$, where the relations are motivated by the commutation relations of $\Uq\otimes\Uq$ in rows 2--5 of Table~\ref{tab:tpegen}.
\begin{Definition} \label{Def:AW2}
	Let $\AW{2}$ be the unital, associative, complex algebra generated by the algebras~$\mathscr{A}$ and~$\awc{1}{M_2}$ subject to the relations
	\begin{gather}
		\awr(1\otimes K_1,L_2\mid A_0,L_1\otimes 1,A_2,A_{N_2},\sigma_L,\sigma_K)=0,\label{eq:aw4}\\
		\awr(K_2,L_1\otimes 1\mid 1\otimes K_1,A_1,A_2,A_{N_1},\sigma_L,\sigma_K)=0,\label{eq:aw5}\\
		\awr\big(1\otimes K_1,M_2\mid A_0,A_{N_1},K_2,A_{N_2},-\shq(1)^{2},\sigma_K\big)=0,\label{eq:aw6}\\
		\awr\big(M_2,L_1\otimes 1\mid A_{N_2},A_1,L_2,A_{N_1},\sigma_L,-\shq(1)^{2}\big)=0,\label{eq:aw7}
	\end{gather}
	where all structure parameters are locally central.
\end{Definition}
\begin{Remark}
	Since $1\otimes K_1$ is locally central in \eqref{eq:aw5}, it commutes with $K_2$. Similarly, $L_1\otimes 1$ commutes with $L_2$. Also, $M_2$ commutes with both $K_2$ and $L_2$.
\end{Remark}
\noindent The correspondence between elements in $\Uq\otimes\Uq$ and $\AW{2}$ can be found in Table~\ref{tab:correspondenceuqaw}.
\begin{table}[h]\centering \renewcommand{\arraystretch}{1.2}\small
	\begin{tabular}{|@{\,\,}l@{\,\,}|l|l|l|l|l|l|l|l|l|l|l|l@{\,\,}|}
		\hline
		Algebra & \multicolumn{5}{c|}{Generators} & \multicolumn{7}{c|}{Central elements} \\ \hline
		$ \Uq \otimes \Uq $ &$ 1 \otimes Y_K $& $ \Delta(Y_K) $& $ Y_L\otimes $ 1& $ \Delta(Y_L) $& $ \Delta(\Omega) $& $ a_s $& $ b_t $& $ \theta $& $ \Omega \otimes 1$ & $ 1 \otimes \Omega $& $ b_Eb_F $& $ a_Ea_F $ \\ \hline
		$ \AW{2} $ & $ 1\otimes K_1 $ &$ K_2 $&$ L_1 \otimes 1 $&$ L_2 $& $ M_2 $& $ A_0 $& $ A_1 $& $ A_2 $& $ A_{N_1} $&$ A_{N_2} $&$ \sigma_L $&$ \sigma_K $ \\ \hline
	\end{tabular}
\caption{Correspondence between $\Uq\otimes\Uq$ and the generators of $\AW{2}$. }\label{tab:correspondenceuqaw}
\end{table}

\begin{Remark}\label{rem:correspondenceaw4aw2}
	There exists a~homomorphism from $\AW{2}$ to the rank $2$ Askey--Wilson algebra $\mathrm{AW}(4)$ defined in \cite{DeBie2020}. Indeed, let $\Lambda_A$, $A \subseteq \{1,2,3,4\}$ be defined as in \cite[Section~2.3]{DeBie2020}. It can be checked that the $\Lambda$'s satisfy the $\AW{2}$-relations if we set
	\begin{gather}
		\Lambda_{\{1\}}=A_1, \qquad \Lambda_{\{2\}}=A_{N_1},\qquad \Lambda_{\{3\}}=A_{N_2}, \qquad \Lambda_{\{4\}}=A_0, \nonumber\\
		\Lambda_{\{1,2\}}=L_1\otimes 1, \qquad \Lambda_{\{2,3\}} =M_2, \qquad \Lambda_{\{3,4\}} =1\otimes K_1, \label{eq:correspondenceaw2aw4}\\
		\Lambda_{\{1,2,3\}}=L_2, \qquad \Lambda_{\{1,2,3,4\}} =A_2, \qquad \Lambda_{\{2,3,4\}}=K_2.\nonumber
	\end{gather}
	However, this correspondence does not give an isomorphism since $\mathrm{AW}(4)$ has more relations. Namely, the relations between $\Lambda_A$, $\Lambda_B$ and $\Lambda_C$ where $(A,B,C)$ is any cyclic permutation of
	\begin{gather}
		(\{1,3\},\{3,4\},\{1,4\})\qquad \text{or}\qquad (\{1,2\},\{2,4\},\{1,4\})\label{eq:aw4relationswithholes},
	\end{gather}
	i.e., the relations where two $\Lambda$'s have `holes' in it. An interesting question is whether above relations hold in the subalgebra of $\Uq\otimes \Uq$ generated by twisted primitive elements, the Casimir~$\Omega$ and their coproducts. That is, we can use Table~\ref{tab:correspondenceuqaw} and~\eqref{eq:correspondenceaw2aw4} to obtain a~correspondence between that subalgebra of $\Uq\otimes\Uq$ and~$\mathrm{AW}(4)$. It can be show that the extra relations \eqref{eq:aw4relationswithholes} indeed hold there, which requires large computations in $\Uq\otimes \Uq$. For one of the relations we have written out parts of the computation in Appendix~\ref{app:calculationsrelationsAW4}.
	
	The relations in \cite{DeBie2020} suggest how the relations of a rank $N$ Askey--Wilson algebra inside $\Uq^{\otimes N}$ may look. This would simplify checking relations in $\Uq\tensor{N}$, which is done in Appendix \ref{app:uquqbivaraw} for the `easy' case $N=2$. Let us define $\Delta^n\colon \Uq\tensor{n}\to\Uq\tensor{n+1}$ recursively by
	\begin{gather*}
		\Delta^n=\big(1\tensor{n-1}\otimes\Delta\big)\Delta^{n-1},
	\end{gather*}
	with $\Delta^0$ the identity on $\Uq$. Note that by the coassociativity of $\Delta$, this is equal to
	\begin{gather*}
		\Delta^n=\big(\Delta\otimes 1\tensor{n-1}\big)\Delta^{n-1}.
	\end{gather*}
	Denote by $[i:j]$ the set of consecutive integers $\{i,i+1,\dots,j-1,j\}$. Then we conjecture the following correspondence between coproducts of twisted primitive elements and Casimirs in $\Uq\tensor{N}$ and $\Lambda_A$'s, $A\subseteq \{1,\dots,N+2\}$, in $\mathrm{AW}(N+2)$,
	\begin{gather*}
		\Lambda_{\{1\}}=a_\sigma,\qquad \Lambda_{\{N+2\}}=b_t,\qquad \Lambda_{[1:N+2]}=\theta,\\
		\Lambda_{[1:n+1]}=\Delta^{n-1}(Y_L)\otimes 1\tensor{N-n},\qquad n=0,\dots,N,\\
		\Lambda_{[i:j]}=1\tensor{i-2}\otimes \Delta^{j-i}(\Omega)\otimes 1\tensor{N+1-j},\qquad 2\leq i \leq j\leq N+1,\\
		\Lambda_{[n+1:N+2]}=1\tensor{n-1}\otimes \Delta^{N-n}(Y_K),\qquad n=1,\dots,N.
	\end{gather*}
\end{Remark}

\subsection[Finite-dimensional representations of AW\_2]{Finite-dimensional representations of $\boldsymbol{\AW{2}}$}

By Theorem~\ref{thm:tabel}, the elements $1\otimes Y_K$, $Y_L\otimes 1$, $\Delta(Y_K)$, $\Delta(Y_L)$ and $\Delta(\Omega)$ in $\Uq\tensor{2}$ satisfy all relations of $\AW{2}$. Therefore, we automatically get existence of a representation of $\AW{2}$ from the representation of $\Uq\tensor{2}$ if we take $A_{N_1}$ and $A_{N_2}$ to be the constants of the representation value of $\Omega\otimes1$ and $1\otimes \Omega$ in $\Uq\tensor{2}$.
\begin{Corollary}\label{cor:aw2reprexist}
	Let $V_1$ and $V_2$ be vector spaces with $\dim(V_k)=N_k+1$ for $k=1,2$. If $A_{N_1}=\chq(N_1+1)$ and $A_{N_2}=\chq(N_2+1)$, there exists a representation of $\AW{2}$ on $V_1\otimes V_2$.
\end{Corollary}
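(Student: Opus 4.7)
The plan is to build the required representation of $\AW{2}$ on $V_1\otimes V_2$ by transporting elements of $\Uq\otimes \Uq$ via the correspondence in Table~\ref{tab:correspondenceuqaw}, and then reading off all defining relations from Theorem~\ref{thm:tabel}.

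First I would equip each $V_k$ with the standard $(N_k+1)$-dimensional irreducible representation of $\Uq$, on which the Casimir $\Omega$ acts as the scalar $\chq(N_k+1)=A_{N_k}$. The coproduct $\Delta$ then makes $V_1\otimes V_2$ into a $\Uq$-module. Next, I would choose parameters $a_{\hat{E}},a_{\hat{F}},a_s,b_{\hat{E}},b_{\hat{F}},b_t\in\CC$ in the definitions of $Y_K,Y_L\in\Uq$ so that
\[
a_s=A_0,\qquad b_t=A_1,\qquad \theta=A_2,\qquad a_{\hat{E}}a_{\hat{F}}=\sigma_K,\qquad b_{\hat{E}}b_{\hat{F}}=\sigma_L.
\]
These are five scalar conditions in six unknowns; the same elementary argument as in the proof of Proposition~\ref{thm:awisouq} shows that the system is (generically) solvable.

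With these choices in hand, I would define the action of $\AW{2}$ on $V_1\otimes V_2$ by sending
\[
1\otimes K_1\mapsto 1\otimes Y_K,\quad K_2\mapsto \Delta(Y_K),\quad L_1\otimes 1\mapsto Y_L\otimes 1,\quad L_2\mapsto \Delta(Y_L),\quad M_2\mapsto \Delta(\Omega),
\]
and letting the constants $A_0,A_1,A_2,A_{N_1},A_{N_2},\sigma_K,\sigma_L$ act as the corresponding scalars. The two copies of $\aw$ sitting inside $\mathscr{A}$ then satisfy their defining relations factor by factor as a consequence of Theorem~\ref{thm:tpeaw3}, since on $V_k$ the Casimir acts as $A_{N_k}$. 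The defining relation \eqref{eq:aw1(M2)} of $\awc{1}{M_2}$ holds because $\Delta$ is an algebra homomorphism, so $\Delta(Y_K)$ and $\Delta(Y_L)$ satisfy AW-relations with $\Omega$ replaced by $\Delta(\Omega)$; the centrality of $M_2=\Delta(\Omega)$ in this subalgebra follows from the centrality of $\Omega$ in $\Uq$. The cross-relations \eqref{eq:aw4}--\eqref{eq:aw7} are exactly rows 2--5 of Table~\ref{tab:tpegen}, and hence hold by Theorem~\ref{thm:tabel}.

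The only remaining points are the local-centrality requirements in Definition~\ref{Def:AW2}. They split into two harmless types: the fixed scalars $A_0,A_1,A_2,A_{N_1},A_{N_2},\sigma_K,\sigma_L$ commute with everything; and the commutations of $1\otimes Y_K$ with $Y_L\otimes 1$, of $\Delta(Y_K)$ with $1\otimes Y_K$, of $\Delta(Y_L)$ with $Y_L\otimes 1$, and of $\Delta(\Omega)$ with $\Delta(Y_K)$ and $\Delta(Y_L)$, are immediate from the tensor-leg structure, the coproduct identities~\eqref{eq:coproducttpe}, and the centrality of $\Omega$ in $\Uq$. No step presents a substantive obstacle beyond bookkeeping, since all nontrivial relational content has already been verified in Theorem~\ref{thm:tabel}.
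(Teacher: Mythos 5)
Your proposal is correct and takes essentially the same route as the paper: the paper likewise obtains the representation by letting the generators of $\AW{2}$ act through the corresponding elements $1\otimes Y_K$, $\Delta(Y_K)$, $Y_L\otimes 1$, $\Delta(Y_L)$, $\Delta(\Omega)$ of $\Uq\otimes\Uq$ (Table~\ref{tab:correspondenceuqaw}) on a tensor product of irreducible $\Uq$-modules, with all defining relations supplied by Theorem~\ref{thm:tabel} and with $A_{N_1}$, $A_{N_2}$ taken to be the representation values of $\Omega\otimes 1$ and $1\otimes\Omega$. The extra details you supply (matching $a_s,b_t,\theta,a_{\hat{E}}a_{\hat{F}},b_{\hat{E}}b_{\hat{F}}$ to the fixed constants, and the local-centrality verifications) are exactly what the paper leaves implicit, and they go through.
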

Moreover, we can show that there exists a representation of $\AW{2}$, where $1\otimes K_1$, $K_2$, $L_1\otimes 1$,~$L_2$ and~$M_2$ are self-adjoint in the case $\sigma_K,\sigma_L\geq0$ using the representation theory of $\Uq\tensor{2}$ and the $*$-structures of the Hopf algebra $\Uq$.
\begin{Proposition}\label{prop:*reprexists}
	Let $V_1$, $V_2$, $A_{N_1}$ and $A_{N_2}$ as in Corollary~{\rm \ref{cor:aw2reprexist}}, $A_0,A_1,A_2\in\RR$ and let ${\sigma_K,\sigma_L\geq 0}$. Then there exist a representation of $\AW{2}$ on $V_1\otimes V_2$ and an inner product on $V_1\otimes V_2$ such that $1\otimes K_1$, $K_2$, $L_1\otimes 1$, $L_2$ and $M_2$ are self-adjoint.			
\end{Proposition}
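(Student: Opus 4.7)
The plan is to realize the representation as a $*$-representation pulled back from $\Uq\tensor{2}$ through the algebra map produced by Corollary~\ref{cor:aw2reprexist}, using the Hopf $*$-algebra structure of $\uq$. Take $V_i$ ($i=1,2$) to be the $(N_i+1)$-dimensional irreducible representation of~$\uq$, equipped with its canonical positive-definite invariant inner product under the compact $*$-structure $\hat K^*=\hat K$, $\hat E^*=\hat F$, $\hat F^*=\hat E$. On $V_i$ the Casimir $\Omega$ acts as the real scalar $\chq(N_i+1)=A_{N_i}$, matching the hypothesis on $A_{N_i}$.

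Next, recognise the hypotheses $\sigma_K,\sigma_L\geq 0$ as exactly the reality conditions for $Y_K,Y_L$ under this $*$-structure. Using $\hat K\hat E=q\hat E\hat K$ and $\hat K\hat F=q^{-1}\hat F\hat K$, a direct computation gives
\[
Y_K^{*}=\overline{a_s}\hat K^2+q^{-1/2}\overline{a_{\hat E}}\hat F\hat K+q^{1/2}\overline{a_{\hat F}}\hat E\hat K,
\]
so $Y_K$ is self-adjoint iff $a_s\in\RR$ and $a_{\hat F}=\overline{a_{\hat E}}$; then $\sigma_K=a_{\hat E}a_{\hat F}=|a_{\hat E}|^2\geq 0$, and the analogous statement holds for $Y_L$. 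Now choose $a_{\hat E},b_{\hat E}\in\CC$ with $|a_{\hat E}|^2=\sigma_K$ and $|b_{\hat E}|^2=\sigma_L$, set $a_{\hat F}=\overline{a_{\hat E}}$, $b_{\hat F}=\overline{b_{\hat E}}$, $a_s=A_0$, $b_t=A_1$, and exploit the remaining freedom in the relative phase of $a_{\hat E}$ and $b_{\hat E}$ to match $\theta=-(\shq(1))^{-2}(a_{\hat E}\overline{b_{\hat E}}+\overline{a_{\hat E}}b_{\hat E})=2\sqrt{\sigma_K\sigma_L}(\shq(1))^{-2}\cos(\arg a_{\hat E}-\arg b_{\hat E})$ to the prescribed value $A_2$.

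Finally, since the coproduct $\Delta\colon\Uq\to\Uq\tensor{2}$ is a $*$-homomorphism and the tensor-product inner product on $V_1\otimes V_2$ makes $V_1\otimes V_2$ a unitary $\Uq\tensor{2}$-module, the five elements $1\otimes Y_K,\,Y_L\otimes 1,\,\Delta(Y_K),\,\Delta(Y_L),\,\Delta(\Omega)$ are all self-adjoint on $V_1\otimes V_2$. Via the correspondence in Table~\ref{tab:correspondenceuqaw}, these operators act as $1\otimes K_1,\,L_1\otimes 1,\,K_2,\,L_2,\,M_2$ respectively in the representation of~$\AW{2}$ supplied by Corollary~\ref{cor:aw2reprexist}, yielding the claim.

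The main technical step is the parameter-matching in the middle paragraph: one has to verify that the constraints $a_{\hat F}=\overline{a_{\hat E}}$ and $b_{\hat F}=\overline{b_{\hat E}}$ imposed by the compact $*$-structure are compatible with simultaneously prescribing $a_s=A_0$, $b_t=A_1$ and $\theta=A_2$. Once the correct reality conditions are identified, everything else is a formal consequence of $\Delta$ being a $*$-homomorphism, so the rest of the argument is essentially the same as that of Corollary~\ref{cor:aw2reprexist}.
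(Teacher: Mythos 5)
Your construction follows exactly the paper's route: equip $\uq$ with the compact $*$-structure $\hat K^*=\hat K$, $\hat E^*=\hat F$, $\hat F^*=\hat E$, take the irreducible $(N_i+1)$-dimensional $*$-representation on $V_i$ (on which $\Omega$ acts as the scalar $\chq(N_i+1)=A_{N_i}$), observe that self-adjointness of $Y_K$ and $Y_L$ forces $a_s,b_t\in\RR$, $a_{\hat F}=\overline{a_{\hat E}}$, $b_{\hat F}=\overline{b_{\hat E}}$ (hence $\sigma_K=|a_{\hat E}|^2\geq0$, $\sigma_L=|b_{\hat E}|^2\geq0$), and then use that $\Delta$ is a $*$-homomorphism together with Theorem~\ref{thm:tabel} and Table~\ref{tab:correspondenceuqaw} to transport the $*$-representation of $\Uq\tensor{2}$ to a representation of $\AW{2}$ with the five generators self-adjoint. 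All of these steps agree with the paper's proof.

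The problem lies precisely in the step you single out as the main technical one. Since $\sigma_K$ and $\sigma_L$ are prescribed structure constants of the algebra, the moduli $|a_{\hat E}|=\sqrt{\sigma_K}$ and $|b_{\hat E}|=\sqrt{\sigma_L}$ are fixed, and the only remaining freedom is the relative phase; your own formula (up to a dropped minus sign, which is harmless here)
\[
\theta=-\frac{a_{\hat E}\overline{b_{\hat E}}+\overline{a_{\hat E}}b_{\hat E}}{(\shq(1))^2}
=-\frac{2\sqrt{\sigma_K\sigma_L}}{(\shq(1))^2}\cos\bigl(\arg a_{\hat E}-\arg b_{\hat E}\bigr)
\]
shows that $\theta$ only sweeps the compact interval $\bigl[-2\sqrt{\sigma_K\sigma_L}(\shq(1))^{-2},\,2\sqrt{\sigma_K\sigma_L}(\shq(1))^{-2}\bigr]$. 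The proposition allows an arbitrary $A_2\in\RR$, so whenever $|A_2|(\shq(1))^2>2\sqrt{\sigma_K\sigma_L}$ --- for instance $\sigma_K=0$ and $A_2\neq0$ --- no choice of phase achieves $\theta=A_2$, and your construction does not produce a representation of the algebra $\AW{2}$ with the given constants. To be fair, the paper's own proof is silent on this point: it only runs the implication forward (the $*$-conditions imply $\sigma_K,\sigma_L\geq 0$ and $A_0,A_1,A_2\in\RR$) and never verifies that every admissible tuple of structure constants is realized by some choice of $a_{\hat E}$, $b_{\hat E}$, $a_s$, $b_t$. So you have correctly isolated the delicate point of the argument, but your claim that the phase freedom settles it is false as stated; closing the gap would require either a restriction on $A_2$ in the hypotheses or a construction going beyond the standard tensor-product inner product used here.
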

\begin{proof}
	Let $*$ be any $*$-structure on $\Uq$, then we have
	\begin{gather*}
		Y_K^*=q^{-\half} \overline{a}_{\hat{E}} \hat{K}^*\hat{E}^*+ q^{\half}\overline{a}_{\hat{F}} \hat{K}^*\hat{F}^* + \overline{a}_s \big(\hat{K}^*\big)^2,\\
		Y_{L}^* =q^{\half} \overline{b}_{\hat{E}} \big(\hat{K}^{-1}\big)^*\hat{E}^*+ q^{-\half} \overline{b}_{\hat{F}} \big(\hat{K}^{-1}\big)^*\hat{F}^* + \overline{b}_t \big(\big(\hat{K}^{-1}\big)^*\big)^2.
	\end{gather*}
	There are 2 inequivalent $*$-structures on $\Uq$ in the case $0<q<1$,
	\begin{enumerate}\itemsep=0pt
		\item[(i)] $\uq$ defined by $\hat{K}^*=\hat{K},$ $\hat{E}^*=\hat{F}$, $\hat{F}^*=\hat{E}$,
		\item[(ii)] $\uqeen$ defined by $\hat{K}^*=\hat{K},$ $\hat{E}^*=-\hat{F}$, $\hat{F}^*=-\hat{E}$.
	\end{enumerate}
	Since $\uqeen$ has no irreducible finite-dimensional representations except the trivial one, we will focus on $\uq$. The Casimir $\Omega$ is self-adjoint in $\uq$, which implies $\Delta(\Omega)$ is as well. In order for $Y_K$ and $Y_L$ to be self-adjoint in $\uq$, we require 	$\overline{a}_{\hat{E}} = a_{\hat{F}}$, $\overline{b}_{\hat{E}}=b_{\hat{F}}$ and $a_s,b_t\in\RR$. Then we have, using Table~\ref{tab:correspondenceuqaw},
	\[
	\sigma_K=|a_{\hat{E}}|^2 \geq 0\qquad \text{and}\qquad \sigma_L=|b_{\hat{E}}|^2 \geq 0
	\]
	and $A_0,A_1,A_2\in\RR$. For each dimension $N+1\in\NN$, the algebra $\uq$ has an irreducible $*$-representation, see, e.g.,~\cite{Koelink1996}. This in turn defines a $*$-representation on $\uq\otimes \uq$. By Theorem~\ref{thm:tabel}, this is automatically a representation of $\AW{2}$ in which $1\otimes K_1$, $K_2$, $L_1\otimes 1$, $L_2$ and $M_2$ are self-adjoint.	
\end{proof}

\begin{Remark}In the following sections we use the above defined finite-dimensional representation of $\AW{2}$. For existence of this representation we referred to $*$-representations of $\Uq$. However, let us stress that the calculations in the following sections only make use of the $\AW{2}$-relations, and other than for existence of the representations, no representation theory of $\Uq$ is needed.
\end{Remark}

\section[Bivariate Askey--Wilson polynomials as overlap coefficients of AW\_2]{Bivariate Askey--Wilson polynomials\\ as overlap coefficients of $\boldsymbol{\AW{2}}$}\label{sec:bivarqracah}

We will now show that the overlap coefficients of eigenvector of $K_2$ and $L_2$ are bivariate $q$-Racah polynomials similar to the ones defined by Gasper and Rahman \cite{Gasper2007}. We will construct a~representation of~$\AW{2}$ on a tensor product of vector spaces $V_1$ and $V_2$, which have dimensions $N_1+1$ and $N_2+1$ respectively.

A representation of $\AW{2}$ on $V_1\otimes V_2$ exists by Corollary \ref{cor:aw2reprexist}. Note that $\awc{1}{A_{N_1}}\otimes\awc{1}{A_{N_2}}$ has a natural representation on $V_1\otimes V_2$ coming from the representations from $\awc{1}{A_{N_1}}$ on $V_1$ and $\awc{1}{A_{N_2}}$ on $V_2$ given by \eqref{eq:1vd5}--\eqref{eq:5vd5}. We will extend this representation of $\awc{1}{A_{N_1}}\otimes\awc{1}{A_{N_2}}$ to $\AW{2}$.
Let us fix $\sigma_L=\sigma_K=(\shq(1))^2$ and $A_{N_i}=\chq(-N_i-1)$. Then we are in the same setting\footnote{That is, $C_0=C_1=-(\shq(2))^2$.} as before. Then, by \eqref{eq:1vd5} and \eqref{eq:nieuwspectrumK} we get a basis for $V_2$ given by $\{\psi_{n_2(k)}\}_{k=0}^{N_2}$, where
\[
n_2(k)=k-\half N_2 .
\]
These are the eigenvectors of $K_1\in\awc{1}{A_{N_2}}$ corresponding to eigenvalue $\lambda_{n_2(k)}$. We will often write
\[
\psi_{n_2}:=\psi_{n_2(k)} \qquad \text{and}\qquad \lambda_{n_2}:=\lambda_{n_2(k)}
\]
when the index $k$ is not relevant.

Similarly, by \eqref{eq:1vd5omgedraaid} and \eqref{eq:nieuwspectrumL} there is a basis $\{\phi_{m_1(j)}\}_{j=0}^{N_1}$ for $V_1$, which are the eigenvectors of $L_1\in\awc{1}{A_{N_1}}$ with eigenvalue $\mu_{m_1(j)}$. Here
\[
m_1(j)=j-\half N_1.
\]
Similar to before, we will write
\[
\phi_{m_1}:=\phi_{m_1(j)} \qquad \text{and}\qquad \mu_{m_1}:=\mu_{m_1(j)}
\]
when the index $j$ is not relevant.

Thus, if $v_1\in V_1$ and $v_2\in V_2$, we have
\begin{gather*}
(1\otimes K_1)(v_1\otimes \psi_{n_2})=\lambda_{n_2}v_1\otimes \psi_{n_2},\\
(L_1\otimes 1)(\phi_{m_1}\otimes v_2)=\mu_{m_1} \phi_{m_1}\otimes v_2,
\end{gather*}
where $\lambda_k$ and $\mu_k$ are given in \eqref{eq:eigenvalues}.
Using these eigenvectors, we can define an inner product on $V_1$ and $V_2$ on the basis elements by
\[
\big\langle \phi_{m_1},\phi_{m_1'}\big\rangle_{V_1}=\delta_{m_1m_1'}\qquad \text{and}\qquad \big\langle \psi_{n_2},\psi_{n_2'}\big\rangle_{V_2}=\delta_{n_2n_2'}.
\]
This in turn defines an inner product on $V_1\otimes V_2$ by
\[
\langle v_1\otimes v_2,v_1'\otimes v_2'\rangle_{V_1\otimes V_2}= \langle v_1,v_1'\rangle_{V_1}\langle v_2,v_2'\rangle_{V_2},\qquad v_1,v_1'\in V_1, \quad v_2,v_2'\in V_2.
\]
With respect to this inner product, both $L_1\otimes 1$ and $1\otimes K_1$ are self-adjoint. Also, we can use $\psi_{n_2}$ and $\phi_{m_1}$ to construct eigenvectors of $K_2$ and $L_2$ respectively. Let us start with $K_2$, for which we can find an eigenvector coming from the eigenvector $\psi_{n_2}$ of $K_1$.
\begin{Proposition}
	For each $\psi_{n_2}$, there exists $\xi\in V_1$ such that $\xi\otimes \psi_{n_2}$ is an eigenvector of $K_2$.
\end{Proposition}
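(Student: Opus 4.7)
The plan is to exploit the local centrality of $1\otimes K_1$ in relation \eqref{eq:aw5}. By the Remark following Definition~\ref{Def:AW2}, $1\otimes K_1$ commutes with $K_2$, so $K_2$ preserves each eigenspace of $1\otimes K_1$. The $\lambda_{n_2}$-eigenspace of $1\otimes K_1$ on $V_1\otimes V_2$ is precisely $V_1\otimes\CC\psi_{n_2}$, which is a finite-dimensional complex vector space. Hence $K_2$ restricts to a linear operator on $V_1\otimes\CC\psi_{n_2}$, and over $\CC$ any such operator has an eigenvector; this eigenvector is necessarily of the form $\xi\otimes\psi_{n_2}$ for some $\xi\in V_1$.

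To make this a bit more structural, I would observe that the restriction of $K_2$ and $L_1\otimes 1$ to $V_1\otimes\CC\psi_{n_2}$ satisfies the AW-relations of \eqref{eq:aw5}, with all of the structure parameters $A_1$, $A_2$, $A_{N_1}$, $\sigma_L$, $\sigma_K$ already scalar and $1\otimes K_1$ acting as the scalar $\lambda_{n_2}$. Thus these two operators define a finite-dimensional representation of an ordinary Askey--Wilson algebra $\aw$ on the subspace $V_1\otimes\CC\psi_{n_2}$, of which $L_1\otimes 1$ acts with spectrum $\{\mu_{m_1}\}_{j=0}^{N_1}$ via the basis $\{\phi_{m_1}\otimes\psi_{n_2}\}$. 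By the rank~1 theory recalled in Section~\ref{ssec:representationsAW(3)}, the companion generator $K_2$ of this Leonard pair is diagonalizable and in particular admits eigenvectors in $V_1\otimes\CC\psi_{n_2}$.

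There is no real obstacle here beyond invoking the correct relation; the only thing to check carefully is that \eqref{eq:aw5} together with local centrality really forces $[1\otimes K_1,K_2]=0$, which is immediate from Definition~\ref{Def:AW2}. The structural viewpoint is preferable because it will also be the starting point for the subsequent analysis in Section~\ref{sec:bivarqracah}: it identifies the eigenvalues of $K_2$ on $V_1\otimes\CC\psi_{n_2}$ as those of an $\aw$-representation with parameters determined by $A_0,A_1,A_2,A_{N_1}$ and $\lambda_{n_2}$, which will be needed to identify the overlap coefficients as bivariate $q$-Racah polynomials.
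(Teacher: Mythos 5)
Your first paragraph is exactly the paper's proof: $K_2$ commutes with $1\otimes K_1$ by local centrality in \eqref{eq:aw5}, hence preserves the $\lambda_{n_2}$-eigenspace $V_1\otimes\CC\psi_{n_2}$ (the identification of this eigenspace relies, as in the paper, on the eigenspaces of $K_1$ in $V_2$ being one-dimensional), and a linear operator on a finite-dimensional complex vector space has an eigenvector, necessarily of the form $\xi\otimes\psi_{n_2}$; the paper phrases the same argument by introducing the induced operator $A$ on $V_1$. Your second, ``structural'' paragraph is not needed and is the one weak spot: merely satisfying the AW-relations with scalar parameters does not by itself yield diagonalizability of $K_2$ on that subspace --- that is precisely what Proposition~\ref{prop:opnieuw3term} establishes afterwards via the ladder argument --- but since your first paragraph is already a complete proof, this does not affect correctness.
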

\begin{proof}
	Let $\lambda_{n_2}$ be the eigenvalue corresponding to eigenvector $\psi_{n_2}$, i.e.,
	\[
	K_1\psi_{n_2}=\lambda_{n_2}\psi_{n_2}.
	\]
	Since $K_2$ and $1\otimes K_1$ commute, we have for any vector $v\in V_1$,
	\[
	(1\otimes K_1)K_2 (v\otimes \psi_{n_2}) = \lambda_{n_2} K_2 (v\otimes \psi_{n_2}).
	\]
	Thus $K_2 (v\otimes \psi_{n_2})$ is also an eigenvector of $1\otimes K_1$ with eigenvalue $\lambda_{n_2}$. Since the eigenspaces of~$K_1$ in~$V_2$ are one-dimensional, there exists a unique $w\in V_1$ such that
	\[
	K_2 (v\otimes \psi_{n_2}) = w\otimes \psi_{n_2}.
	\]
	Define $A$ to be the operator on $V_1$ that resembles the action of $K_2$ on the left side of the above tensor product, i.e.,
	\[
	A v =w.
	\]
	Since $K_2$ is a linear operator, $A$ is as well. Therefore, $A$ has an eigenvector, say $\xi$ with eigenvalue~$\lambda'$. Then we have
	\[
	K_2 (\xi\otimes \psi_{n_2}) = (A\xi)\otimes \psi_{n_2} =\lambda'\xi\otimes \psi_{n_2}. \tag*{\qed}
	\]\renewcommand{\qed}{}
\end{proof}

Since $K_2$ and $L_1\otimes 1$ satisfy the AW relation \eqref{eq:aw5}, we get a similar `ladder' property as in the original $\aw$. Namely we can use $L_1\otimes 1$ to create a ladder of eigenvectors for $K_2$ from the eigenvector $\xi\otimes\psi_{n_2}$. Moreover, the `Leonard pair' property still holds. That is, $L_1\otimes 1$ acts as a three-term operator on the eigenvectors of $K_2$. The proof is similar to the original $\aw$, where one has to realize that the non-constant term `$1\otimes K_1$' that appears in the AW-relation~\eqref{eq:aw5} must be treated as the constant~$\lambda_{n_2}$: the eigenvalue of how it acts on $\xi\otimes \psi_{n_2}$.
\begin{Proposition}\label{prop:opnieuw3term}
	For each $\psi_{n_2}$, there is a basis for $V_1$ given by
	\[
	\big\{\psi_{n_1(k)}^{n_2}\big\}_{k=0}^{N_1},
	\]
	such that $\psi_{n_1}^{n_2}\otimes\psi_{n_2}$ is an eigenvector of $K_2$ with eigenvalue $\lambda_{n_1}$,
	\begin{gather*}
		K_2\big(\psi_{n_1}^{n_2}\otimes\psi_{n_2}\big) = \lambda_{n_1}\psi_{n_1}^{n_2}\otimes\psi_{n_2}.
	\end{gather*}
	Here,
	\[
	n_1:=n_1(k)=k+n_2-\half N_1.
	\]
	Moreover, $L_1\otimes 1$ acts as a three-term operator on these eigenvectors,
	\begin{gather*}
(L_1\otimes 1)\big(\psi_{n_1}^{n_2}\otimes\psi_{n_2}\big)= a_{n_1}\big(\bm{\alpha}_{L_1}^{n_2}\big) \psi_{n_1-1}^{n_2}\otimes\psi_{n_2} + b_{n_1}\big(\bm{\alpha}_{L_1}^{n_2}\big) \psi_{n_1}^{n_2}\otimes\psi_{n_2} \\
\hphantom{(L_1\otimes 1)\big(\psi_{n_1}^{n_2}\otimes\psi_{n_2}\big)=}{}
			 + a_{n_1+1}\big(\bm{\alpha}_{L_1}^{n_2}\big)\psi_{n_1+1}^{n_2}\otimes\psi_{n_2}, 
	\end{gather*}
	where $\bm{\alpha}_{L_1}^{n_2}=(\alpha_0+2n_2,\alpha_1,\alpha_2,-N_1-1)$ and the functions $a_{n}$ and $b_{n}$ are given by~\eqref{eq:annew} and~\eqref{eq:bnnew}.
\end{Proposition}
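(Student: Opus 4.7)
The plan is to exploit the locally central structure of~\eqref{eq:aw5} to reduce the statement to the rank-one representation theory of $\aw$ from Section~\ref{ssec:representationsAW(3)}.

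First, since $1 \otimes K_1$ is locally central in~\eqref{eq:aw5}, it commutes with $K_2$ (and trivially with $L_1 \otimes 1$). Each eigenspace of $1 \otimes K_1$ equals $V_1 \otimes \CC\psi_{n_2}$, which is one-dimensional in the $V_2$-factor. Consequently $K_2$ leaves each such eigenspace invariant, and one can define an operator $A_{n_2} \colon V_1 \to V_1$ by $K_2(v \otimes \psi_{n_2}) = (A_{n_2} v) \otimes \psi_{n_2}$ for all $v \in V_1$. This is essentially the observation already exploited in the preceding proposition, but now the ladder structure inside $\AW{2}$ will promote a single eigenvector to a full basis.

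Restricting~\eqref{eq:aw5} to $V_1 \otimes \CC\psi_{n_2}$ and using that $1 \otimes K_1$ acts there as the scalar $\shq(\alpha_0 + 2n_2)$, the pair $(A_{n_2}, L_1)$ satisfies the original $\aw$-relations on $V_1$ with structure parameters corresponding, via the reparametrization~\eqref{eq:nieuweA}--\eqref{eq:Aalpha}, to the vector $\bm{\alpha}_{L_1}^{n_2} = (\alpha_0 + 2n_2, \alpha_1, \alpha_2, -N_1 - 1)$. Since $\dim V_1 = N_1 + 1$ is exactly what is forced by the quantization condition with $\alpha_3 = -N_1 - 1$, this is precisely the setup for the finite-dimensional $\aw$-representation studied in Section~\ref{ssec:representationsAW(3)}. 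Applying the formulas \eqref{eq:1vd5}, \eqref{eq:anbntri}, \eqref{eq:nieuwspectrumK}, \eqref{eq:annew} and~\eqref{eq:bnnew} to this shifted instance produces a basis $\{\psi_{n_1(k)}^{n_2}\}_{k=0}^{N_1}$ of $V_1$ diagonalising $A_{n_2}$ with eigenvalues $\lambda_{n_1(k)}$, where $n_1(k) = k + n_2 - \tfrac{1}{2}N_1$, and gives the claimed three-term action of $L_1$ on this basis. Tensoring with $\psi_{n_2}$ yields the proposition.

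The main obstacle is to verify that the effective $\aw$-representation $(A_{n_2}, L_1)$ on $V_1$ realises exactly the finite-dimensional representation of Section~\ref{ssec:representationsAW(3)} with the specific starting point $\alpha_0 + 2n_2$ of the $A_{n_2}$-spectrum, rather than some other representation compatible with the same structure parameters (e.g.\ one corresponding to a different choice of distinguished root of the characteristic polynomial~\eqref{eq:awpol}). I would pin this down by observing that the action of $L_1$ on $V_1$ is already fixed by the $\awc{1}{A_{N_1}}$-representation with its known spectrum $\{\mu_{m_1(j)}\}_{j=0}^{N_1}$ determined by $\alpha_1$; combined with the $\aw$-relations from Section~\ref{ssec:representationsAW(3)} (symmetric in the two generators) this uniquely identifies the spectrum of $A_{n_2}$ and hence the representation. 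Existence of an eigenvector of $A_{n_2}$ of the form predicted by the previous proposition then fixes the remaining ambiguity in labeling.
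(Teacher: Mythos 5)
Your proposal is correct and follows essentially the same route as the paper's proof: there, too, one restricts to the $1\otimes K_1$-eigenspace $V_1\otimes\CC\,\psi_{n_2}$, interprets $1\otimes K_1$ as the scalar $\lambda_{n_2}=\shq(\alpha_0+2n_2)$ so that the structure parameters of \eqref{eq:aw5} become the constants encoded by $\bm{\alpha}_{L_1}^{n_2}$, and then ``proceeds as Zhedanov'', i.e., invokes precisely the rank-one representation theory of Sections~\ref{ssec:representationsAW(3)} and~\ref{sec:herdefAW} that you cite. Your closing paragraph, which pins down which root of the characteristic polynomial~\eqref{eq:awpol} governs the spectrum of $A_{n_2}$ (using the known spectrum of $L_1$ and the eigenvector supplied by the preceding proposition), makes explicit an identification the paper leaves implicit, and it is consistent with the paper's uniqueness statement for rank-one representations with fixed spectrum.
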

\begin{proof}
	The proof is similar to the construction of the representation of the original $\aw$, see~\cite{Zhedanov1991}. Let $\psi_{n_2}$ be an eigenvector of $K_1$. By the previous proposition, there exists an eigenvector $\psi_{p}^{n_2}\otimes \psi_{n_2}$ of $K_2$, say with eigenvalue $\lambda(p)$,
	\begin{gather}
		K_2 \psimn{p}{n_2} = \lambda(p) \psimn{p}{n_2}.\label{eq:1503ev}
	\end{gather}
	Define
	\[
	A=[K_2,L_1\otimes 1]_q,
	\]
	which plays the role of the third generator of the original $\aw$. We will use the ladder property of the Askey--Wilson algebra~\eqref{eq:aw5}. That is, we will show there exist $\alpha,\beta,\gamma\in\CC$ such that
	\begin{gather*}
		\psimn{s}{n_2} = \left(\alpha K_2 + \beta (L_1\otimes 1) + \gamma A\right)\psimn{p}{n_2}
	\end{gather*}
	is also an eigenvector of $K_2$ with a different eigenvalue $\lambda(s)$. Requiring
	\[K_2\psimn{s}{n_2} = \lambda(s) \psimn{s}{n_2}\]
	implies
	\begin{gather*}
		\big( \alpha K_2^2 + \beta K_2(L_1\otimes 1) + \gamma K_2A\big)\psimn{p}{n_2} = \lambda(s) (\alpha K_2 + \beta (L_1\otimes 1) + \gamma A )\psimn{p}{n_2}.
	\end{gather*}
	Using \eqref{eq:1503ev} and the algebra relations \eqref{eq:aw5}, this leads to
	\begin{gather*}
		0= \psimn{p}{n_2}\big[ \alpha \lambda(p)^2 + \beta \big(q^{-2}\lambda(p)(L_1\otimes 1) + q^{-1}A \big) \color{white}q\hat{C}_1\color{black} \\
\hphantom{0=}{} +\gamma \big(q^2\lambda(p)A - q\lambda(p)\hat{B} - q\hat{C}_1(L_1\otimes 1) - q\hat{D}_1\big) - \lambda(s) \alpha \lambda(p) + \beta (L_1\otimes 1) + \gamma A\big].
	\end{gather*}
	Here, $\hat{B}$, $\hat{C_1}$, $\hat{D_1}$ are the parameters of the Askey--Wilson algebra~\eqref{eq:aw5} such that
	\[
	\chq(2) K_2 (L_1\otimes 1) K_2 - K_2^2 (L_1\otimes 1) - (L_1\otimes 1) K_2^2 = \hat{B}K_2 + \hat{C}_1 (L_1\otimes 1) + \hat{D}_1.
	\]
	That is,
	\begin{gather*}
		\hat{B}=(\shq(1))^2 ( A_1 (1\otimes K_1)- A_2 A_{N_1} ),\qquad \hat{C}_1 = -(\chq(1))^2 \sigma_K,\\
		\hat{D}_1= \chq(1\big(A_1A_{N_1}\sigma_K + (\shq(1))^2A_2 (1\otimes K_1)\big).
	\end{gather*}
	Since
	\[
	(1\otimes K_1) \psi_{n_1}^{n_2}\otimes\psi_{n_2} = \lambda_{n_2} \psi_{n_1}^{n_2}\otimes\psi_{n_2},
	\]
	$1\otimes K_1$ acts as the constant $\lambda_{n_2}$ on $\psimn{p}{n_2}$. Therefore, we can interpret $\hat{B}$, $\hat{C}_1$ and $\hat{D}_1$ as constants and proceed as Zhedanov in \cite{Zhedanov1991}. That is, we can find a ladder of eigenvectors of $K_2$
	\begin{gather*}
		\psimn{n_1}{n_2},\qquad n_1= n_2-\half N_1, n_2-\half N_1 +1,\ldots, n_2+\half N_1
	\end{gather*}		
	with eigenvalues $\lambda_{n_1}$. Also, $L_1\otimes 1$ acts as a three-term operator on these eigenvectors,
\begin{gather*}
(L_1\otimes 1)\big(\psi_{n_1}^{n_2}\otimes\psi_{n_2}\big)= a_{n_1} \psi_{n_1-1}^{n_2}\otimes\psi_{n_2} + b_{n_1} \psi_{n_1}^{n_2}\otimes\psi_{n_2} + a_{n_1+1}\psi_{n_1+1}^{n_2}\otimes\psi_{n_2}.
\end{gather*}
Let us find the coefficients $a_{n_1}$ and $b_{n_1}$. Since $1\otimes K_1$ can be interpreted here as
\[
\lambda_{n_2}=\shq(\alpha_0 +2n_2),
\]
	we are in the setting \eqref{eq:Aalpha} where we have to replace $\alpha_0$ by $\alpha_0 + 2n_2$. Therefore, $a_{n_1}$ and $b_{n_1}$ can be found using the formulas~\eqref{eq:annew} and~\eqref{eq:bnnew} with $\bm{\alpha}_{L_1}^{n_2}=(\alpha_0+2n_2,\alpha_1,\alpha_2,-N_1-1)$.
\end{proof}
\begin{Remark}
	Note that the label $n_2$ of the eigenvector of $1\otimes K_1$ gets into the parameter of the three-term recurrence relation for the left side of the eigenvectors for $K_2$. We will see that in the bivariate $q$-Racah polynomials this leads to the degree of the right $q$-Racah polynomial getting into the parameters of the left $q$-Racah polynomial.
\end{Remark}

By symmetry, we get a similar result for $L_2$ and $1\otimes K_1$.
\begin{Corollary}\label{cor:3termomgedraaid}
	For each $\phi_{m_1}$, there is a basis for $V_2$ given by
	\[
	\big\{\phi_{m_2(j)}^{m_1}\big\}_{j=0}^{N_2},
	\]
	such that $\phi_{m_1}\otimes\phi_{m_2}^{m_1}$ is an eigenvector of $L_2$ with eigenvalue $\mu_{m_2}$,
	\begin{gather*}
		L_2\big(\phi_{m_1}\otimes\phi_{m_2}^{m_1}\big) = \mu_{m_2}\phi_{m_1}\otimes\phi_{m_2}^{m_1}.
	\end{gather*}
	Here we have
	\[
	m_2:=m_2(j)=j+m_1-\half N_2.
	\]
	Moreover, $1\otimes K_1$ acts as a three-term operator on these eigenvectors,
	\begin{gather*}
(1\otimes K_1)\big(\phi_{m_1}\otimes\phi_{m_2}^{m_1}\big)= a_{m_2}\big(\bm{\alpha}_{K_1}^{m_1}\big) \phi_{m_1}\otimes\phi_{m_2-1}^{m_1} + b_{m_2}\big(\bm{\alpha}_{K_1}^{m_1}\big) \phi_{m_1}\otimes\phi_{m_2}^{m_1} \\
\hphantom{(1\otimes K_1)\big(\phi_{m_1}\otimes\phi_{m_2}^{m_1}\big)=}{}
+ a_{m_2+1}\big(\bm{\alpha}_{K_1}^{m_1}\big)\phi_{m_1}\otimes\phi_{m_2+1}^{m_1} ,
	\end{gather*}
	where $\bm{\alpha}_{K_1}^{m_1}=(\alpha_1+2m_1,\alpha_0,\alpha_2,-N_2-1)$ and the functions $a_{m}$ and $b_{m}$ are again given by \eqref{eq:annew} and~\eqref{eq:bnnew}.
\end{Corollary}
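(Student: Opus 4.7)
The plan is to mirror the proof of Proposition \ref{prop:opnieuw3term}, exchanging the roles of the two tensor factors, and exploiting the symmetric AW-relation \eqref{eq:aw4}
\[
\awr(1\otimes K_1,\, L_2\mid A_0,\, L_1\otimes 1,\, A_2,\, A_{N_2},\, \sigma_L,\, \sigma_K)=0,
\]
in which $L_1\otimes 1$ now plays the role of the (locally central) structure parameter $A_1$, and $1\otimes K_1,L_2$ are the generators. The key point is that on the subspace $\phi_{m_1}\otimes V_2$, the operator $L_1\otimes 1$ acts as the scalar $\mu_{m_1}=\shq(2m_1+\alpha_1)$, so \eqref{eq:aw4} becomes a genuine $\aw$ relation between $1\otimes K_1$ and $L_2$ with numerical parameter $A_1$ replaced by $\mu_{m_1}$.

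First I would show the existence of an eigenvector of $L_2$ of the form $\phi_{m_1}\otimes \eta$ for some $\eta\in V_2$. Since $L_2$ commutes with $L_1\otimes 1$ (by local centrality in \eqref{eq:aw4}), for any $v\in V_2$ the vector $L_2(\phi_{m_1}\otimes v)$ is again an eigenvector of $L_1\otimes 1$ with eigenvalue $\mu_{m_1}$; because the $\mu_{m_1}$-eigenspace of $L_1\otimes 1$ is exactly $\phi_{m_1}\otimes V_2$, there is a linear operator $B\colon V_2\to V_2$ with $L_2(\phi_{m_1}\otimes v)=\phi_{m_1}\otimes Bv$. Any eigenvector $\eta$ of $B$ gives an eigenvector $\phi_{m_1}\otimes\eta$ of $L_2$.

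Next, starting from such an eigenvector, I would apply verbatim the ladder construction from Proposition \ref{prop:opnieuw3term} with the substitutions $K_2\leftrightarrow L_2$, $L_1\otimes 1\leftrightarrow 1\otimes K_1$. The third generator is the $q$-commutator $[L_2,1\otimes K_1]_q$, and the structure parameters $\hat B,\hat C_1,\hat D_1$ read off from \eqref{eq:aw4} contain $L_1\otimes 1$, which acts as the scalar $\mu_{m_1}$. This produces a basis $\{\phi_{m_1}\otimes\phi_{m_2}^{m_1}\}_{j=0}^{N_2}$ of simultaneous eigenvectors of $L_1\otimes 1$ and $L_2$ with $L_2$-eigenvalue $\mu_{m_2}$, and guarantees that $1\otimes K_1$ acts tridiagonally on the right factor.

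Finally, to identify the three-term coefficients, I would match the effective parameter list in \eqref{eq:aw4} (with $L_1\otimes 1\mapsto \mu_{m_1}$, $A_0\mapsto A_0$, $A_2\mapsto A_2$, $A_{N_2}\mapsto A_{N_2}$, $\sigma_L,\sigma_K$ unchanged) against the parametrization \eqref{eq:Aalpha} and apply formulas \eqref{eq:annew}, \eqref{eq:bnnew}: the role formerly played by $\alpha_1$ is now played by $\alpha_1+2m_1$, yielding the parameter vector $\bm{\alpha}_{K_1}^{m_1}=(\alpha_1+2m_1,\alpha_0,\alpha_2,-N_2-1)$. No new obstacle arises; the only thing to check carefully is that the cyclic relabeling of generators and parameters between \eqref{eq:aw4} and \eqref{eq:aw5} is consistent with the formulas \eqref{eq:annew}--\eqref{eq:bnnew} (in particular the interchange of the roles of $\alpha_0$ and $\alpha_1$), which is a bookkeeping issue rather than a conceptual one.
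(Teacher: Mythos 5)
Your proposal is correct and is essentially the paper's own argument: the paper proves this corollary in two lines by noting that interchanging the roles of $K$ and $L$ amounts to interchanging $\alpha_0\leftrightarrow\alpha_1$ and $\sigma_L\leftrightarrow\sigma_K$, so the statement is Proposition~\ref{prop:opnieuw3term} under these substitutions --- exactly the symmetry you exploit, except that you unwind it by re-running the eigenvector-existence and ladder constructions in the mirrored setting rather than citing them. The details you supply (invariance of $\phi_{m_1}\otimes V_2$ under $L_2$ via local centrality of $L_1\otimes 1$, treating $L_1\otimes 1$ as the scalar $\mu_{m_1}$ so that \eqref{eq:aw4} becomes a genuine $\aw$ relation, and the parameter bookkeeping yielding $\bm{\alpha}_{K_1}^{m_1}=(\alpha_1+2m_1,\alpha_0,\alpha_2,-N_2-1)$) are all consistent with the paper.
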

\begin{proof}
From \eqref{eq:nieuweA} and \eqref{eq:Aalpha}, we see that interchanging the roles of $K$ and $L$ is the same as interchanging $\alpha_0\leftrightarrow \alpha_1$ and $\sigma_L\leftrightarrow \sigma_K$. Therefore, this result is just Proposition~\ref{prop:opnieuw3term} together with these substitutions.
\end{proof}

We are now ready to prove the main theorem of this section. Define the overlap coefficients
\begin{gather}
	P_{n_1,n_2}(m_1,m_2)=C(n_1,n_2,m_1,m_2)\big\langle \phi_{m_1}\otimes \phi_{m_2}^{m_1},\psi_{n_1}^{n_2}\otimes \psi_{n_2}\big\rangle_{V_1\otimes V_2},\label{eq:defbivarqracah}
\end{gather}
where $C$ is the normalizing function defined by
\begin{align}
	C(n_1,n_2,m_1,m_2)=\frac{\big\langle\phi_{m_1(0)},\psi_{n_1(0)}^{n_2}\big\rangle_{V_1} \big\langle\phi_{m_2(0)}^{m_1}, \psi_{n_2(0)}\big\rangle_{V_2}}{\big\langle\phi_{m_1},\psi_{n_1(0)}^{n_2}\big\rangle_{V_1}\big\langle \phi_{m_2}^{m_1},\psi_{n_2(0)}\big\rangle_{V_2}\big\langle\phi_{m_1(0)},\psi_{n_1}^{n_2}\big\rangle_{V_1} \big\langle\phi_{m_2(0)}^{m_1},\psi_{n_2}\big\rangle_{V_2}}.\label{eq:normalizingbivar}
\end{align}
Then we can show that $P_{n_1,n_2}(m_1,m_2)$ are bivariate $q$-Racah polynomials. This comes from the observation that just as in the univariate case, $\big\langle\phi_{m_1},\psi_{n_1}^{n_2}\big\rangle$ and $\big\langle\phi_{m_2}^{m_1},\psi_{n_2}\big\rangle$ are $q$-Racah polynomials with parameters $\bm{\alpha}^{n_2}_{L_1}$ and $\bm{\alpha}_{K_1}^{m_2}$ respectively instead of $\bm{\alpha}=(\alpha_0,\alpha_1,\alpha_2,\alpha_3)$ in \eqref{eq:qracahparnew}. There is essentially only one difference in the proof of the bivariate case compared to the univariate case. The overlap coefficient $\big\langle\phi_{m_2}^{m_1},\psi_{n_2}\big\rangle$ is a $q$-Racah polynomial of degree $j_2$ in the variable~$k_2$, i.e., the degree and variable have switched. We can interchange these by a simple change in parameters.
\begin{Theorem}
	We have
	\begin{gather*}
		P_{n_1(k_1),n_2(k_2)}(\mu_{m_1(j_1)},\mu_{m_2(j_2)})=R_{k_1,k_2}\big(y_{j_1},y_{j_2};\alpha_0,\alpha_1,\alpha_2,N_1,N_2;q^2\big),
	\end{gather*}
	where the right-hand side is a product of two $q$-Racah polynomials given by
	\begin{gather}
R_{k_1}\big(y_{j_1};-q^{\alpha_0+2k_2-N_2+\alpha_1+\alpha_2-N_1-1}, q^{\alpha_0+2k_2-N_2-\alpha_1-\alpha_2-N_1-1}, q^{-2N_1-2},-q^{2\alpha_1};q^2\big)\nonumber\\
\qquad{} \times R_{k_2}\big(y_{j_2};-q^{\alpha_0+\alpha_1+2j_1-N_1-\alpha_2-N_2-1}, q^{\alpha_0-\alpha_1-2j_1+N_1+\alpha_2-N_2-1},\nonumber\\
\qquad\quad \ {} q^{-2N_2-2},-q^{2\alpha_1+4j_1-2N_1};q^2\big),
 \label{eq:bivarqracahmijn}
	\end{gather}
	with $y_{j_1}=q^{-2j_1}-q^{2\alpha_1-2N_1+2j_j}$ and $y_{j_2} = q^{-2j_2}-q^{2\alpha_1-2N_1-2N_2+4j_1+2j_2}$.
\end{Theorem}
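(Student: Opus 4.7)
The plan is as follows. First, I would factorise the inner product on $V_1\otimes V_2$ using its tensor product structure: on simple tensors,
\[
\langle \phi_{m_1}\otimes \phi_{m_2}^{m_1},\psi_{n_1}^{n_2}\otimes \psi_{n_2}\rangle_{V_1\otimes V_2}=\langle \phi_{m_1},\psi_{n_1}^{n_2}\rangle_{V_1}\,\langle \phi_{m_2}^{m_1},\psi_{n_2}\rangle_{V_2}.
\]
Inspecting the normalising constant $C$ in \eqref{eq:normalizingbivar}, the four ratios there split precisely into two groups that, multiplied with the corresponding factor from the inner product, yield two normalised overlap coefficients of the form \eqref{eq:overlap}. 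Concretely, $P_{n_1,n_2}(m_1,m_2)$ becomes the product of the normalised overlap in $V_1$ between the bases $\{\phi_{m_1}\}$ and $\{\psi_{n_1}^{n_2}\}$, and the normalised overlap in $V_2$ between $\{\psi_{n_2}\}$ and $\{\phi_{m_2}^{m_1}\}$.

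Next, I would identify each factor via the univariate theory from Section~\ref{ssec:q-Racah pol}. Proposition~\ref{prop:opnieuw3term} supplies the first reduction: $(K_2,L_1\otimes 1)$ acts on $V_1\otimes\operatorname{span}(\psi_{n_2})$ as a Leonard pair with effective Askey--Wilson parameters $\bm{\alpha}_{L_1}^{n_2}$. Substituting these into \eqref{eq:qracahparnew} and using $2n_2=2k_2-N_2$ gives precisely the first $q$-Racah factor of \eqref{eq:bivarqracahmijn}. Likewise, Corollary~\ref{cor:3termomgedraaid} gives that $(L_2,1\otimes K_1)$ is a Leonard pair on $\operatorname{span}(\phi_{m_1})\otimes V_2$ with parameters $\bm{\alpha}_{K_1}^{m_1}$. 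However, here the roles of the ``$K$''- and ``$L$''-like generators are swapped relative to the univariate set-up (via the $\alpha_0\leftrightarrow \alpha_1$ interchange), so the corollary naturally produces the polynomial in the form $R_{j_2}(y_{k_2};\cdot)$, with $j_2$ as degree and $k_2$ as variable --- the opposite of what \eqref{eq:bivarqracahmijn} asks.

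The last step is to reconcile this swapped polynomial with the stated form via the $q$-Racah self-duality
\[
R_n\bigl(q^{-j}+\gamma\delta q^{j+1};\alpha,\beta,\gamma,\delta;q\bigr)=R_j\bigl(q^{-n}+\alpha\beta q^{n+1};\gamma,\delta,\alpha,\beta;q\bigr),
\]
applied with $q\mapsto q^2$. A short computation of $\alpha\beta$ for the effective parameters coming from $\bm{\alpha}_{K_1}^{m_1}$, together with the substitution $2m_1=2j_1-N_1$, gives $\alpha\beta=-q^{2\alpha_1+4j_1-2N_1-2N_2-2}$, which makes the dual variable equal exactly the $y_{j_2}$ of the statement. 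The dualised parameters $(\gamma,\delta,\alpha,\beta)$ then match those in \eqref{eq:bivarqracahmijn} up to the obvious symmetry of the terminating $\,_4\phi_3$ series in its numerator and its denominator parameters. The main obstacle throughout is purely bookkeeping: the two index shifts $2n_2=2k_2-N_2$ and $2m_1=2j_1-N_1$ must be tracked carefully through the duality, and one has to verify that the dualised parameters indeed agree with those of \eqref{eq:bivarqracahmijn} modulo these $\,_4\phi_3$-symmetries. The structural content --- that the bivariate overlap is forced by the tensor product representation to factorise as a product of two univariate $q$-Racah polynomials with nested parameter shifts --- is immediate once the two effective Leonard-pair structures have been identified.
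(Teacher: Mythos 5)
Your proposal follows essentially the same route as the paper's own proof: factorise the overlap via the tensor structure into a $V_1$ part and a $V_2$ part, identify the $V_1$ part through the three-term recurrence of Proposition~\ref{prop:opnieuw3term} with shifted parameters $\bm{\alpha}_{L_1}^{n_2}$, observe that Corollary~\ref{cor:3termomgedraaid} yields the $V_2$ part with degree and variable interchanged, and repair this with the $q$-Racah self-duality. The paper phrases the duality as the parameter substitution $\alpha_0\leftrightarrow\alpha_1$, $\alpha_2\leftrightarrow-\alpha_2$ (keeping $\gamma$ fixed) rather than your $(\alpha,\beta,\gamma,\delta)\mapsto(\gamma,\delta,\alpha,\beta)$, but these agree up to the symmetry of the terminating $\,_4\phi_3$ in its parameter sets, which you correctly note, so the two arguments coincide.
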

\begin{proof}
	The proof is similar to the original $\aw$, we only have to do it twice. It is convenient to split $P_{n_1,n_2}(m_1,m_2)$ into the two parts concerning $V_1$ and $V_2$ and analyze both parts separately. We have
	\begin{gather}
		P_{n_1,n_2}(m_1,m_2) = \frac{\big\langle \phi_{m_1},\psi_{n_1}^{n_2}\big\rangle _{V_1}\big\langle \phi_{m_1(0)},\psi_{n_1(0)}^{n_2}\big\rangle _{V_1}}{\big\langle \phi_{m_1},\psi_{n_1(0)}^{n_2}\big\rangle _{V_1}\big\langle \phi_{m_1(0)},\psi_{n_1}^{n_2}\big\rangle _{V_1}} \times\frac{\big\langle \phi_{m_2}^{m_1}, \psi_{n_2}\big\rangle _{ V_2}\big\langle \phi_{m_2(0)}^{m_1}, \psi_{n_2(0)}\big\rangle _{ V_2}}{\big\langle \phi_{m_2}^{m_1}, \psi_{n_2(0)}\big\rangle _{ V_2}\big\langle \phi_{m_2(0)}^{m_1}, \psi_{n_2}\big\rangle _{ V_2}}.\label{eq:opsplitsen}
	\end{gather}
	For notational convenience, we will define $q$-Racah polynomials in terms of $\bm{\alpha}=(\alpha_0,\alpha_1,\alpha_2,\alpha_3)$,
	\[
	r_{k}(j;\bm{\alpha})=R_k\big(y_j;-q^{\alpha_0-\alpha_1+\alpha_2+\alpha_3},q^{\alpha_0-\alpha_1-\alpha_2+\alpha_3},q^{2\alpha_3},-q^{-2\alpha_1};q^2\big).
	\]
	We will show that the part concerning $V_1$ of \eqref{eq:opsplitsen} is equal to
	\begin{gather}
P_{n_1(k_1)}^{V_1}(m_1(j_1);n_2):=\frac{\big\langle \phi_{m_1(j_1)},\psi_{n_1(k_1)}^{n_2}\big\rangle _{V_1}\big\langle \phi_{m_1(0)},\psi_{n_1(0)}^{n_2}\big\rangle _{V_1}}{\big\langle \phi_{m_1(j_1)},\psi_{n_1(0)}^{n_2}\big\rangle _{V_1}\big\langle \phi_{m_1(0)},\psi_{n_1(k_1)}^{n_2}\big\rangle _{V_1}} = r_{k_1}\big(j_1;\bm{\alpha}_{L_1}^{n_2}\big),\label{eq:polracah1}
	\end{gather}
	where $\bm{\alpha}_{L_1}^{n_2}=(\alpha_0+2n_2,\alpha_1,\alpha_2,-N_1-1)$.
	For the part concerning $V_2$ of~\eqref{eq:opsplitsen} we have
	\begin{align}
P_{n_2(k_2)}^{V_2}(m_2(j_2);m_1):={}&\frac{\big\langle \phi_{m_2(j_2)}^{m_1},\psi_{n_2(k_2)}\big\rangle _{ V_2}\big\langle \phi_{m_2(0)}^{m_1}, \psi_{n_2(0)}\big\rangle _{ V_2}}{\big\langle \phi_{m_2(j_2)}^{m_1}, \psi_{n_2(0)}\big\rangle _{ V_2}\big\langle \phi_{m_2(0)}^{m_1}, \psi_{n_2(k_2)}\big\rangle _{ V_2}}\nonumber\\
={} &r_{k_2}(j_2;\alpha_0,\alpha_1+2m_1,-\alpha_2,-N_2-1), \label{eq:polracah2}
	\end{align}
	which comes from $\bm{\alpha}_{K_1}^{n_1}$ where $\alpha_0 \leftrightarrow \alpha_1+2m_1$ and $\alpha_2\leftrightarrow -\alpha_2$, which is necessary to interchange degree and variable of the $q$-Racah polynomial.
	
	Note that
	\[
	P_{n_1(0)}^{V_1}(m_1;n_2)=1=P_{n_1}^{V_1}(m_1(0);n_2) \qquad \text{and}\qquad P_{n_2(0)}^{V_2}(m_2;m_1)=1=P_{n_2}^{V_2}(m_2(0);m_1).
	\]
	Let us start with $P_{n_1}^{V_1}(m_1;n_2)$. We know that the self adjoint operator $L_1\otimes 1$ acts as multiplication by $\mu_{m_1}$ on $\phi_{m_1}\otimes \phi_{m_2}^{m_1}$ and as a three-term operator (Proposition~\ref{prop:opnieuw3term}) on $\psi_{n_1}^{n_2}\otimes \psi_{n_2}$. It only acts non-trivially on $V_1$, thus we can look at the action on the left side of the inner product. Entirely similar to~\eqref{eq:origin3term} we get, using a slight abuse of notation,
	\begin{gather*}
		\mu_{m_1} \inprod{\phi_{m_1}}{\psi_{n_1}^{n_2}}_{V_1}= \big\langle L_1\phi_{m_1},\psi_{n_1}^{n_2}\big\rangle _{V_1}
		= \big\langle \phi_{m_1},L_1\psi_{n_1}^{n_2}\big\rangle _{V_1} \\
\hphantom{\mu_{m_1} \inprod{\phi_{m_1}}{\psi_{n_1}^{n_2}}_{V_1}}{}
= a_{n_1}(\bm{\alpha}_{L_1}^{n_2}) \big\langle \phi_{m_1},\psi_{n_1-1}^{n_2}\big\rangle _{V_1}+b_{n_1}(\bm{\alpha}_{L_1}^{n_2})\big\langle \phi_{m_1},\psi_{n_1}^{n_2}\big\rangle _{V_1}\\
\hphantom{\mu_{m_1} \inprod{\phi_{m_1}}{\psi_{n_1}^{n_2}}_{V_1}=}{}+a_{n_1+1}(\bm{\alpha}_{L_1}^{n_2})\big\langle \phi_{m_1},\psi_{n_1+1}^{n_2}\big\rangle _{V_1},
	\end{gather*}
	where $\bm{\alpha}_{L_1}^{n_2}$ can be found in Proposition~\ref{prop:opnieuw3term} and $a_{n_1}(\bm{\alpha})$, $b_{n_1}(\bm{\alpha})$ are given by~\eqref{eq:annew}, \eqref{eq:bnnew}. We want to obtain a three-term recurrence relation for $P_{n_1}^{n_2}(m_1)$ from this equation. However, this will not work because of the appearance of~$n_1$ in the normalizing factor $\big\langle \phi_{m_1(0)},\psi_{n_1}^{n_2}\big\rangle _{V_1}$. To solve this, we define
	\[
	p_{n_1}^{V_1}(m_1;n_2):=\frac{\big\langle \phi_{m_1},\psi_{n_1}^{n_2}\big\rangle _{V_1}}{\big\langle \phi_{m_1},\psi_{n_1(0)}^{n_2}\big\rangle _{V_1}},
	\]
	which is only the left part of $P_{n_1}^{V_1}(m_1;n_2)$ in \eqref{eq:polracah1}. Now we have
	\begin{gather*}
		\mu_{m_1}p_{n_1}^{V_1}(m_1;n_2)= a_{n_1}\big(\bm{\alpha}_{L_1}^{n_2}\big)p_{n_1-1}^{V_1}(m_1;n_2) + b_{n_1}\big(\bm{\alpha}_{L_1}^{n_2}\big)p_{n_1}^{V_1}(m_1;n_2)\\
\hphantom{\mu_{m_1}p_{n_1}^{V_1}(m_1;n_2)=}{}
+ a_{n_1+1}\big(\bm{\alpha}_{L_1}^{n_2}\big)p_{n_1+1}^{V_1}(m_1;n_2).
	\end{gather*}
	Together with the initial conditions
	\[
	p_{n_1(0)}^{V_1}(m_1;n_2)=1 \qquad \text{and}\qquad p_{n_1(-1)}^{V_1}(m_1;n_2)=0,
	\]
	this three-term recurrence relation generates a set $\big\{p_{n_1(k_1)}^{V_1}(m_1;n_2)\big\}_{k_1=0}^{N_1}$ of polynomials in the variable $\mu_{m_1}$. Similarly as in the original $\aw$, one can show that this coincides with the recurrence relation for $q$-Racah polynomials with parameters $\bm{\alpha}_{L_1}^{n_2}$. Therefore,
	\begin{gather}
		p_{n_1(k_1)}^{V_1}(m_1(j_1);n_2)=h(n_1,n_2)r_{k_1}\big(j_1;\bm{\alpha}_{L_1}^{n_2}\big),\label{eq:3termqracah1}
	\end{gather}
	where $h$ is some normalization function. To prove \eqref{eq:polracah1}, we have to show that $1/h$ is exactly the right-hand side of $P_{n_1}^{V_1}(m_1;n_2)$, i.e.,
	\begin{gather}
		\frac{1}{h(n_1,n_2)}=\frac{\big\langle \phi_{m_1(0)},\psi_{n_1(0)}^{n_2}\big\rangle _{V_1}}{\big\langle \phi_{m_1(0)},\psi_{n_1}^{n_2}\big\rangle _{V_1}}.\label{eq:claimbivarqracah}
	\end{gather}
	Since $P_{n_1}^{V_1}(m_1(0);n_2)=1$, we can use \eqref{eq:3termqracah1} to obtain
	\begin{gather*}
		1=p_{n_1(k_1)}^{V_1}(m_1(0);n_2)\frac{\big\langle \phi_{m_1(0)},\psi_{n_1(0)}^{n_2}\big\rangle _{V_1}}{\big\langle \phi_{m_1(0)},\psi_{n_1}^{n_2}\big\rangle _{V_1}} = h(n_1,n_2)r_{k_1}\big(0;\bm{\alpha}_{L_1}^{n_2}\big)\frac{\big\langle \phi_{m_1(0)},\psi_{n_1(0)}^{n_2}\big\rangle _{V_1}}{\big\langle \phi_{m_1(0)},\psi_{n_1}^{n_2}\big\rangle _{V_1}}.
	\end{gather*}
	By definition of the $q$-hypergeometric series, we also have
	\[r_{k_1}\big(0;\bm{\alpha}_{L_1}^{n_2}\big)=1,\]
	which proves our claim \eqref{eq:claimbivarqracah}.
	
	Let us now turn to $P_{n_2}^{V_2}(m_2;m_1)$, which is slightly more subtle because we do not know how $1\otimes L_1$ acts on $\phi_{m_1}\otimes\phi_{m_2}^{m_1}$. However, since we do know that $1\otimes K_1$ acts as a three-term operator on $\phi_{m_1}\otimes\phi_{m_2}^{m_1}$, we get a three-term recurrence relation in the variable $\mu_{m_2}$ instead of the degree~$n_2$. That is, the roles of `$K$' and `$L$' have interchanged: we use $1\otimes K_1$ as multiplication and three-term operator instead of $L_1\otimes 1$. Using the symmetry of~$\AW{2}$ we can see that the overlap coefficients of $\phi_{m_2}^{m_1}$ and $\psi_{n_2}$ are $q$-Racah polynomials of degree $m_2$ in the variable $n_2$.
	
	That is, Corollary \ref{cor:3termomgedraaid} tells us that
	\begin{gather*}
		\lambda_{n_2} \big\langle \phi_{m_2}^{m_1},\psi_{n_2}\big\rangle _{V_2} = \big\langle \phi_{m_2}^{m_1},K_1\psi_{n_2}\big\rangle _{V_2} = \big\langle K_1\phi_{m_2}^{m_1},\psi_{n_2}\big\rangle _{V_2} \\
		\qquad{} =a_{m_2}\big(\bm{\alpha}_{K_1}^{m_1}\big)\big\langle \phi_{m_2-1}^{m_1},\psi_{n_2}\big\rangle _{V_2}+b_{m_2}\big(\bm{\alpha}_{K_1}^{m_1}\big) \big\langle \phi_{m_2}^{m_1},\psi_{n_2}\big\rangle _{V_2}+a_{m_2+1}\big(\bm{\alpha}_{K_1}^{m_1}\big)\big\langle \phi_{m_2+1}^{m_1},\psi_{n_2}\big\rangle _{V_2},
	\end{gather*}
	Therefore,
	\[
	\frac{\big\langle \phi_{m_2(j_2)}^{m_1}, \psi_{n_2(k_2)}\big\rangle _{ V_2}}{\big\langle \phi_{m_2(0)}^{m_1}, \psi_{n_2(k_2)}\big\rangle _{ V_2}}=g(m_1,m_2(j_2))r_{j_2}\big(k_2;\bm{\alpha}_{K_1}^{m_1}\big),
	\]
for some normalizing function $g$. Using the duality property of $q$-Racah polynomials, which can directly be seen from the $q$-hypergeometric series \eqref{eq:defqracah}, we can swap degree and variable by interchanging $\alpha_0\leftrightarrow\alpha_1$ and $\alpha_2\leftrightarrow-\alpha_2$. In terms of our original definition of the $q$-Racah polynomials \eqref{eq:defqracah}, this interchanges $\alpha\beta\leftrightarrow\gamma\delta$ and $\beta\delta\leftrightarrow\alpha$, while keeping $\gamma$ the same. Therefore,
	\[
	\frac{\big\langle \phi_{m_2(j_1)}^{m_1}, \psi_{n_2(k_2)}\big\rangle _{ V_2}}{\big\langle \phi_{m_2(0)}^{m_1}, \psi_{n_2(k_2)}\big\rangle _{ V_2}}=g(m_1,m_2)r_{k_2}(j_2,\bm{\beta}),
	\]
	where $\bm{\beta}=(\alpha_0,\alpha_1+2m_1,-\alpha_2,-N_2-1)$. Consequently,
	\begin{align}
		P_{n_2(k_2)}^{V_2}(m_2(j_2);m_1)& = \frac{\big\langle \phi_{m_2(j_2)}^{m_1},\psi_{n_2(k_2)}\big\rangle _{ V_2}}{\big\langle \phi_{m_2(0)}^{m_1}, \psi_{n_2(k_2)}\big\rangle _{ V_2}}\frac{\big\langle \phi_{m_2(0)}^{m_1}, \psi_{n_2(0)}\big\rangle _{ V_2}}{\big\langle \phi_{m_2(j_2)}^{m_1}, \psi_{n_2(0)}\big\rangle _{ V_2}} \nonumber\\
		&= g(m_1,m_2)r_{k_2}(j_2,\bm{\beta})\frac{\big\langle \phi_{m_2(0)}^{m_1}, \psi_{n_2(0)}\big\rangle _{ V_2}}{\big\langle \phi_{m_2(j_2)}^{m_1}, \psi_{n_2(0)}\big\rangle _{ V_2}}.\label{eq:pn2m1mu2}
	\end{align}
	Using that
	\[
	1=P_{n_2(0)}^{V_2}(m_2(j_2);m_1)= g(m_1,m_2)r_{0}(j_2;\bm{\beta})\frac{\big\langle \phi_{m_2(0)}^{m_1}, \psi_{n_2(0)}\big\rangle _{ V_2}}{\big\langle \phi_{m_2(j_2)}^{m_1}, \psi_{n_2(0)}\big\rangle _{ V_2}}
	\]
	and
	\[
	r_{0}(j_2,\bm{\beta})=1,
	\]
	we get
	\[
	\frac{g(m_1,m_2)}{\big\langle \phi_{m_2}^{m_1}, \psi_{n_2(0)}\big\rangle _{ V_2}}=\frac{1}{\big\langle \phi_{m_2(0)}^{m_1}, \psi_{n_2(0)}\big\rangle _{ V_2}}.
	\]
	Substituting this into \eqref{eq:pn2m1mu2} gives
	\[
	P_{n_2(k_2)}^{V_2}(m_2(j_2);m_1)= r_{k_2}(j_2,\bm{\beta}) ,
	\]
	which proves the theorem.
\end{proof}

\begin{Remark}
	Note that the bivariate $q$-Racah polynomials in the theorem above are formally not polynomials but rational functions. This is because of the appearance of $j_1$ in the parameters of the right $q$-Racah polynomial $R_{k_2}$. This can be solved easily by putting an appropriate factor in front.
\end{Remark}
\begin{Remark}
	Orthogonality from the bivariate $q$-Racah polynomials arises similarly as in the univariate case. Let
	\[
	\mathbf{N}=\{(l_1,l_2)\colon l_1=0,\dots ,N_1\text{ and }l_2=0,\dots,N_2\}.
	\]
	From $\big\{\phi_{m_1(j_1)}\otimes \phi_{m_2(j_2)}^{m_1(j_1)}\big\}_{(j_1,j_2)\in\mathbf{N}}$ and $\big\{\psi_{n_1(k_1)}^{n_2(k_2)}\otimes \psi_{n_2(k_2)}\big\}_{(k_1,k_2)\in\mathbf{N}}$ both being orthonormal, we obtain
	\begin{gather}
		\delta_{k_1,k_1'}\delta_{k_2,k_2'} =\delta_{n_1(k_1),n_1(k_1')}\delta_{n_2(k_2),n_2(k_2')}= \big\langle \psi_{n_1(k_1')}^{n_2(k_2')},\psi_{n_1(k_1)}^{n_2(k_2)}\big\rangle _{V_1}\big\langle \psi_{n_2(k_2')},\psi_{n_2(k_2)}\big\rangle _{V_2}.\label{eq:orthobivar1}
	\end{gather}
	Using that
	\begin{gather*}
		\psi_{n_1(k_1')}^{n_2(k_2')}=\sum_{j_1=0}^{N_1}\big\langle \psi_{n_1(k_1')}^{n_2(k_2')},\phi_{m_1(j_1)}\big\rangle \phi_{m_1(j_1)}
	\end{gather*}
	and
	\begin{gather*}
		\psi_{n_2(k_2')}=\sum_{j_2=0}^{N_2}\big\langle \psi_{n_2(k_2')},\phi_{m_2(j_2)}^{m_1(j_1)}\big\rangle \phi_{m_2(j_2)}^{m_1(j_1)},
	\end{gather*}
	we obtain that \eqref{eq:orthobivar1} is equal to		
	\begin{gather*}
 \sum_{j_1=0}^{N_1}\sum_{j_2=0}^{N_2} \big\langle \psi_{n_1(k_1')}^{n_2(k_2')},\phi_{m_1(j_1)}\big\rangle _{V_1}\big\langle \phi_{m_1(j_1)},\psi_{n_1(k_1)}^{n_2(k_2)}\big\rangle _{V_1}\big\langle \psi_{n_2(k_2')}, \phi_{m_2(j_2)}^{m_1(j_1)}\big\rangle _{V_2}\big\langle \phi_{m_2(j_2)}^{m_1(j_1)},\psi_{n_2(k_2)}\big\rangle _{V_2}.
\end{gather*}
Rearranging this expression gives
\begin{gather*}
\sum_{j_1=0}^{N_1}\sum_{j_2=0}^{N_2} \big\langle \phi_{m_1(j_1)}\otimes \phi_{m_2(j_2)}^{m_1(j_1)},\psi_{n_1(k_1)}^{n_2(k_2)}\otimes \psi_{n_2(k_2)}\big\rangle _{V_1\otimes V_2}\\
\qquad{} \times \overline{\big\langle \phi_{m_1(j_1)}\otimes \phi_{m_2(j_2)}^{m_1(j_1)},\psi_{n_1(k_1')}^{n_2(k_2')}\otimes \psi_{n_2(k_2')}\big\rangle }_{V_1\otimes V_2}.
	\end{gather*}
	Therefore, using \eqref{eq:defbivarqracah}, we obtain the orthogonality
	\begin{gather*}
		\sum_{j_1=0}^{N_1}\sum_{j_2=0}^{N_2} w_2(j_1,j_2,k_1,k_2) P_{n_1(k_1),n_2(k_2)}(m_1(j_1),m_2(j_2))\overline{P_{n_1(k_1'),n_2(k_2')}(m_1(j_1),m_2(j_2))} \\
		\qquad{} =\delta_{k_1,k_1'}\delta_{k_2,k_2'} ,
	\end{gather*}
	where the weight function is given by
	\[
	w_2(j_1,j_2,k_1,k_2)=\frac{1}{|C(n_1(k_1),n_2(k_2),m_1(j_1),m_2(j_2)|^2}.
	\]
	The normalizing function $C$ can be found in \eqref{eq:normalizingbivar}. This weight function is just a product of the weight functions of the two univariate $q$-Racah polynomials in \eqref{eq:bivarqracahmijn},
	\begin{align}
		w_2(j_1,j_2,k_1,k_2):={}& w_2(j_1,j_2,k_1,k_2,\alpha_0,\alpha_1,\alpha_2,\alpha_3;q)\nonumber\\
={}& w(j_1,k_1,\bm{\alpha}_{L_1}^{n_2(k_2)};q)w(j_2,k_2,\bm{\beta};q),\label{eq:doubleweight}
	\end{align}	
	where $w$ is the weight function for $q$-Racah polynomials given in \eqref{eq:weightqracahnew}, $\bm{\alpha}_{L_1}^{n_2(k_2)}$ is given in Proposition \ref{prop:opnieuw3term} and $\bm{\beta}=(\alpha_0,\alpha_1+2m_1(j_1),-\alpha_2,-N_2-1)$. This can be seen from analyzing both $q$-Racah polynomials \eqref{eq:polracah1} and \eqref{eq:polracah2} from $P_{n_1,n_2}(m_1,m_2)$. Let us show this for $P_{n_1(k_1)}^{V_1}(m_1,m_2)$ in \eqref{eq:polracah1}. Entirely similar to the univariate case \eqref{eq:orthogonalityqracah} we have
	\begin{align*}
		\delta_{k_1,k_1'}&=\big\langle \psi_{n_1(k_1')}^{n_2(k_2)},\psi_{n_1(k_1)}^{n_2(k_2)}\big\rangle _{V_1}=\sum_{j_1=0}^{N_1} \big\langle \psi_{n_1(k_1')}^{n_2(k_2)},\phi_{m_1(j_1)}\big\rangle _{V_1}\big\langle \phi_{m_1(j_1)},\psi_{n_1(k_1)}^{n_2(k_2)}\big\rangle _{V_1} \\
		&= \sum_{j_1=0}^{N_1} w(j_1,k_1,\bm{\alpha}_{L_1}^{n_2(k_2)};q)P_{n_1(k_1)}^{V_1}(m_1(j_1);m_2)\overline{P_{n_1(k_1')}^{V_1}(m_1(j_1);m_2)}.
	\end{align*}
	Similarly, one finds that
	\begin{gather*}
		\delta_{k_2,k_2'}=\sum_{j_2=0}^{N_2} w(j_2,k_2,\bm{\beta};q)P_{n_2(k_2)}^{V_2}(m_2(j_2);n_1)\overline{P_{n_2(k_2')}^{V_2}(m_2(j_2);n_1)},
	\end{gather*}
	which proves \eqref{eq:doubleweight}.
\end{Remark}
\begin{Remark}
	The bivariate $q$-Racah polynomials~\eqref{eq:bivarqracahmijn} are the same, up to a factor in front of the two $q$-hypergeometric series, as found by Gasper and Rahman \cite{Gasper2007} after a change of parameters, variables and degrees. In the notation of \cite[equation~(2.6)]{Gasper2007}, looking only at the $q$-hypergeometric series we have
	\begin{gather}
R_{n_1,n_2}(x_1,x_2;a_1,a_2,a_3,b,N;q)\nonumber\\
\qquad{} =r_{n_1}\big(x_1;b,a_2/q,a_1q^{2x_2},x_2;q\big)r_{n_2}\big(x_2-n_1;ba_2q^{n_1},a_3/q,a_1a_2q^{N+n_1},N-n_1;q\big)\nonumber\\
\qquad{} = \qhyp{4}{3}{q^{-n_1},\ ba_2q^{n_1},\ q^{-x_1},\ a_1q^{x_1}}{bq,\ a_1a_2q^{x_2},\ q^{-x_2}}{q;q}\nonumber\\
\qquad\quad \ {}\times \qhyp{4}{3}{q^{-n_2},\ ba_2a_3q^{2n_1+n_2},\ q^{n_1-x_2},\ a_1a_2q^{n_1+x_2}}{ba_2q^{2n_1+1},\ a_1a_2a_3q^{n_1+N},\ q^{n_1-N}}{q;q}.\label{eq:qracahgasrah}
	\end{gather}
Let us now make a change of parameters, variables and degrees. First, define
\[
	\alpha=\alpha_0+\alpha_1+\alpha_2\qquad \text{and}\qquad \hat{\alpha}=\alpha_0-\alpha_1+\alpha_2.
	\]
Then take parameters
	\begin{gather*}
	a_1=-q^{2\alpha_0+2\alpha_2+2},\qquad a_2=q^{-2N_2},\qquad a_3=q^{-2N_1},\qquad b=-q^{2\alpha_0},\\ 2N=-\hat{\alpha}+N_1+N_2-1,
	\end{gather*}
	variables
	\[
	2x_1=2j_1+2j_2-\hat{\alpha}-N_1-N_2-1,\qquad 2x_2=2j_1-\hat{\alpha}-N_1+N_2-1,
	\]
	and degrees $n_1=k_2$ and $n_2=k_1$. If we also take $q^2$ instead of $q$, we obtain that \eqref{eq:qracahgasrah} is equal to
	\begin{gather}
		\qhyp{4}{3}{q^{-2k_2},\, -q^{2k_2+2\alpha_0-2N_2},\, q^{-2j_1-2j_2+\hat{\alpha}+N_1+N_2+1},\, -q^{2j_1+2j_2+\alpha-N_1-N_2+1}}{-q^{2\alpha_0+2},\, -q^{2j_1+\alpha-N_1-N_2-1},\, q^{-2j_1+\hat{\alpha}+N_1-N_2+1}}{q^2;q^2}\label{eq:bivarqracah1}\\
		\times\qhyp{4}{3}{q^{-2k_1},\, -q^{4k_2+2k_1+2\alpha_0-2N_1-2N_2},\, q^{2k_2-2j_1+\hat{\alpha}+N_1-N_2+1},\, -q^{2k_2+2j_1+\alpha-N_1-N_2-1}}{-q^{4k_2+2\alpha_0-2N_2+2},\, -q^{2k_2+\alpha-N_1-N_2-1},\, q^{2k_2+\hat{\alpha}-N_1-N_2+1}}{q^2;q^2}\!.\nonumber
	\end{gather}
	Then apply to both $q$-Racah polynomials \eqref{eq:bivarqracah1} 
Sears' transformation formula for a~balanced~${}_{4}\phi_{3}$ \cite[equation~(III.15)]{GR2004},
	\[\qhyp{4}{3}{q^{-n},\ \alpha,\ \beta,\ \gamma}{\delta,\ \varepsilon,\ \zeta}{q;q}=\frac{\big(\alpha^{-1}\delta,\alpha^{-1}\zeta;q\big)_n}{(\delta,\zeta;q)_n}\qhyp{4}{3}{q^{-n},\ \alpha,\ \beta^{-1}\varepsilon,\ \gamma^{-1}\varepsilon}{\alpha\delta^{-1}q^{1-n},\ \epsilon,\ \alpha\zeta^{-1}q^{1-n}}{q;q}.\]
	Afer this transformation, our product of $q$-Racah polynomials \eqref{eq:bivarqracahmijn} is the same as the bivariate $q$-Racah polynomial defined by Gasper and Rahman \cite{Gasper2007} up to a factor in front of the $q$-hypergeometric series.
\end{Remark}

\section[AW\_2 and q-difference operators for bivariate q-Racah polynomials]{$\boldsymbol{\AW{2}}$ and $\boldsymbol{q}$-difference operators\\ for bivariate $\boldsymbol{q}$-Racah polynomials}\label{sec:qdif}

Until now, the structure parameters of an AW-relation could be interpreted as a constant. Either it was a central element of $\AW{2}$ or it acted as multiplication by an eigenvalue. In this way, one generator of the AW-relation worked as a three-term operator on the eigenvectors of the other generator. In this section, we will see that this is no longer true for all generators that satisfy an AW-relation. We will show that $L_2$ acts as a 9-term operator on the eigenvectors $\psi_{n_1}^{n_2}\otimes \psi_{n_2}$. Crucial here is the following observation. Suppose we have elements $K,L$ and $(A_k)_{k=0}^3$ in an algebra and $A_4,A_5\in\RR$ such that
\[
\awr(K,L\mid A_0,A_1,A_2,A_3,A_4,A_5)=0
\]
and $(A_k)_{k=0}^3$ are locally central. Then we will show that $K$ still acts as a three-term operator on the eigenspaces of $L$. Also, the form of the spectrum of $K$ and $L$ is similar to what we have seen before. This result is proven in the following lemma.
\begin{Lemma}\label{lem:threetermeigenspace}
	Let $\A$ be an algebra, $K$, $L$ and $(A_k)_{k=0}^3$ elements in $\A$ and $A_4,A_5\in\RR$. Suppose
	\[
	\awr(K,L\mid A_0,A_1,A_2,A_3,A_4,A_5)=0
	\]
	and $(A_k)_{k=0}^3$ are locally central. Then we have the following:
\begin{enumerate}\itemsep=0pt
		\item[$(i)$] The spectra of $K$ and $L$ have the same hyperbolic form as in the original $\aw$. In particular, if $A_4=A_5=(\shq(1))^2$ we have
		\begin{gather*}
			\lambda_n(p_0)=\shq(2n+p_0+1)
		\end{gather*}
		for some $p_0\in\RR$.
		\item[$(ii)$] Denote the eigenspace corresponding to $\lambda_n(p_0)$ of $K$ by $\Psi(n)$. Then $L$ acts as a three-term operator on the eigenspaces of $K$. That is,
		\[
		L \Psi(n) \subset \Psi(n-1)\cup \Psi(n)\cup\Psi(n+1).
		\]
		Here the eigenspace $\Psi(n)$ is defined as subspace of vectors which have eigenvalue $\lambda_n(p_0)$ and
		\[
		L \Psi(n)=\{ Lv\colon v\in\Psi(n)\}.
		\]
	\end{enumerate}
\end{Lemma}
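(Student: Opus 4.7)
The plan is to reduce to Zhedanov's analysis of $\aw$ recalled in Section~\ref{ssec:representationsAW(3)} by exploiting that $A_0, A_1, A_2, A_3$, being locally central, commute with $K$, $L$ and each other. In the representation under consideration, these pairwise commuting operators admit a joint (generalized) eigenspace decomposition, and each joint eigenspace $W$ is invariant under $K$ and $L$. On any such $W$ with joint eigenvalues $(a_0, a_1, a_2, a_3)$, the structure parameters
\begin{gather*}
	B = (\shq(1))^2(a_0 a_1 - a_2 a_3), \qquad C_0 = -(\chq(1))^2 A_4, \qquad C_1 = -(\chq(1))^2 A_5, \\
	D_0 = \chq(1)(a_0 a_3 A_4 + (\shq(1))^2 a_1 a_2), \qquad D_1 = \chq(1)(a_1 a_3 A_5 + (\shq(1))^2 a_0 a_2)
\end{gather*}
become genuine scalars, so the restriction of the defining relation $\awr(K,L \mid A_0, \ldots, A_5) = 0$ to $W$ is a standard Zhedanov relation \eqref{eq:awrelation2}--\eqref{eq:awrelation1} with scalar structure constants.

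For part~(i), invoke the description of the spectrum of the generators of $\aw$ in a finite-dimensional representation from Section~\ref{ssec:representationsAW(3)}, in particular formula \eqref{eq:3vd5}, on each joint eigenspace $W$: the spectrum of $K|_W$ has the hyperbolic form dictated by the sign of $C_1 = -(\chq(1))^2 A_5$, and under the canonical normalization $A_4 = A_5 = (\shq(1))^2$ we obtain $\shq(2n + p_0 + 1)$ for some $p_0 \in \R$ determined by $W$. Since the spectrum of $K$ on the whole representation is the union of the spectra on all joint eigenspaces, the claim follows, and the statement for $L$ is obtained by the symmetry $K \leftrightarrow L$ of the AW-relations.

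For part~(ii), note that $L$ commutes with every $A_k$, so $L$ preserves each joint eigenspace $W$. On $W$, Zhedanov's analysis (equation \eqref{eq:anbntri}) shows that $L|_W$ acts as a three-term operator with respect to the eigenbasis of $K|_W$, giving $L(\Psi(n) \cap W) \subseteq (\Psi(n-1) \cup \Psi(n) \cup \Psi(n+1)) \cap W$. Summing over all joint eigenspaces $W$ of $A_0, \ldots, A_3$ produces the inclusion $L \Psi(n) \subseteq \Psi(n-1) \cup \Psi(n) \cup \Psi(n+1)$.

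The principal obstacle is justifying the joint eigenspace decomposition, but this is automatic in the finite-dimensional $*$-representations considered in Proposition~\ref{prop:*reprexists}, where the locally central operators are diagonalizable. An alternative, purely algebraic route avoids decomposing the whole space: work with a single joint eigenvector $v$ of $K$ and $A_0, \ldots, A_3$ and run Zhedanov's ladder construction directly, since all polynomial expressions in $K$, $L$ and the (locally central) $A_k$'s preserve the joint eigenspace of $v$, so each step of the construction produces the next eigenvector of $K$ in the chain exactly as in the original $\aw$ setting.
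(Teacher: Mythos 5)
Your reduction is genuinely different from the paper's argument, and it works in every setting where the paper actually applies the lemma, but it proves slightly less than what is claimed: you need the locally central elements $A_0,\dots,A_3$ to be simultaneously diagonalizable, so that the joint eigenspaces span the representation and every vector of $\Psi(n)$ decomposes into pieces on which the structure parameters are scalars. That hypothesis is not part of the lemma (which only assumes the $A_k$ commute with each other and with $K$, $L$), and the paper's proof never invokes it. Instead, the paper runs Zhedanov's ladder construction on an arbitrary eigenvector $\psi_n$ of $K$ alone, with the twist that the operator-valued parameters are absorbed into the ladder operator: the candidate new eigenvector is $\psi_s=(\beta L+\gamma[K,L]_q+\delta B+\varepsilon D_1)\psi_n$. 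Requiring $K\psi_s=\lambda_s\psi_s$ yields the scalar quadratic $\lambda_n^2+\lambda_s^2-\chq(2)\lambda_n\lambda_s+C_1=0$ (which uses only that $C_1=-(\chq(1))^2A_5$ is a genuine scalar, and gives part (i)), together with an operator identity whose coefficients of $B$ and $D_1$ can be made to vanish by choosing $\delta$, $\varepsilon$ appropriately whenever $\lambda_s\neq\lambda_n$. Eliminating $[K,L]_q$ between the two ladder equations then gives $L\psi_n=a_n\psi_{n-1}+b_n\psi_{n+1}+c_nB\psi_n+d_nD_1\psi_n$, and since $B$, $D_1$ commute with $K$, the last two terms remain in $\Psi(n)$; this is part (ii), with no spectral decomposition of the $A_k$'s anywhere. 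So the paper's route buys the lemma in its stated generality (non-semisimple locally central elements, no finite-dimensionality), while your route buys a quick reduction to the rank-1 theory that suffices for the representations of Proposition~\ref{prop:*reprexists}.

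Two further points. First, when you invoke Zhedanov's formulas, in particular \eqref{eq:anbntri}, on a block $W$: those formulas are stated for irreducible representations with one-dimensional $K$-eigenspaces, whereas $K|_W$ may have degenerate eigenspaces and $W$ need not be irreducible; what you actually need on $W$ is the eigenspace-level ladder argument, which is precisely the computation the paper carries out, so the citation is loose even if repairable. Second, your fallback ``purely algebraic route'' does not close the diagonalizability gap: the claim that all polynomial expressions in $K$, $L$ and the $A_k$'s preserve the joint eigenspace of $v$ is false if that eigenspace is taken with respect to $K$ as well (the entire point is that $L$ moves $K$-eigenvectors), and if it is taken only with respect to the $A_k$'s, then the argument establishes the three-term property only for joint eigenvectors, so you still need the spanning assumption to reach every vector of $\Psi(n)$.
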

\begin{proof}
	We proceed similarly as in the original algebra $\aw$ and in the proof of Proposition~\ref{prop:opnieuw3term}, only with a slight adjustment, since in the current setting $(A_k)_{k=0}^3$ cannot all be interpreted as constants. Let $B$, $C_0$, $C_1$, $D_0$, $D_1$ as in~\eqref{eq:nieuweA} and let $\psi_n$ be an eigenvector of $K$. We want to find a new eigenvector $\psi_s$ of $K$. Take
	\begin{gather*}
		\psi_s = (\beta L + \gamma [K,L]_q + \delta B + \varepsilon D_1)\psi_n
	\end{gather*}
	for some constants $\beta$, $\gamma$, $\delta$, $\varepsilon$ that still need to be determined. Requiring $\psi_s$ to be an eigenvector of $K$ with a different eigenvalue leads to
	\begin{gather}
		\lambda_n^2+\ls^2-\chq(2)\lambda_n\ls +C_1=0\label{eq:meervanhetzelfde}
	\end{gather}
	and
	\begin{align}
		-q\gamma(\lambda_n B +D_1) + (\lambda_n-\lambda_s)\delta B +\varepsilon(\lambda_n-\lambda_s) D_1 =0.\nonumber
	\end{align}
	This second equation is equivalent to
	\begin{align}
		(\delta(\lambda_n-\lambda_s)-q\gamma\lambda_n)B + (\varepsilon(\lambda_n-\lambda_s)-q\gamma) D_1 =0, \label{eq:nieuwevergaw}
	\end{align}
	which has a nontrivial solution for any nonzero $\gamma$ and $\lambda_s\neq\lambda_n$. Equation \eqref{eq:meervanhetzelfde} is the same as in the original $\aw$ and forces the spectrum of $K$ to be of the same hyperbolic form, proving~(i).

For (ii), let $\psi_n\in\Psi(n)$. Since \eqref{eq:nieuwevergaw} can be solved for all $n$ and $s\in\{n-1,n+1\}$, there exists $\psi_{n+1}\in\Psi(n+1)$ and $\psi_{n-1}\in\Psi(n-1)$ and constants $\beta^+$, $\gamma^+$, $\delta^+$, $\varepsilon^+$, $\beta^-$, $\gamma^-$, $\delta^-$, $\varepsilon^-$ such that
	\begin{gather}
\psi_{n+1} =\big(\beta^+ L + \gamma^+ [K,L]_q + \delta^+ B + \varepsilon^+ D_1\big)\psi_n,\nonumber\\
			\psi_{n-1} =\big(\beta^-L + \gamma^- [K,L]_q + \delta^- B + \varepsilon^- D_1\big)\psi_n. \label{eq:systemLpsi}
	\end{gather}
	By substitution,\footnote{If $\lambda_{n-1}\neq\lambda_{n+1}$, the matrix $\left(\begin{smallmatrix}
			\beta^- & \gamma^- \\
			\beta^+ & \gamma^+
		\end{smallmatrix}\right)$ is non-singular and we can always solve \eqref{eq:systemLpsi} for $L\psi_n$ while eliminating $[K,L]_q$.}
	we can eliminate $[K,L]_q$ from \eqref{eq:systemLpsi} and obtain
	\begin{align*}
		L\psi_n = a_n \psi_{n-1} + b_n \psi_{n+1} + c_n B\psi_n + d_n D_1\psi_n,
	\end{align*}
	for suitable constants $a_n$, $b_n$, $c_n$, $d_n$. Since $B$ and $D_1$ commute with~$K$, it leaves its eigenspaces intact. Therefore,
	\[
	B\psi_n,D_1\psi_n \in \Psi(n),
	\]
	which proves (ii).
\end{proof}

\begin{Remark}
	In the original $\aw$, the eigenspaces of $K$ and $L$ were one-dimensional. However, this need not be true in general as we already saw in $\AW{2}$. There, the eigenspaces of $K_2$ and $L_2$ can have dimensions up to $(N_1+1)+(N_2+1)$.
\end{Remark}
We can use Lemma~\ref{lem:threetermeigenspace} twice for $L_2$, since it appears in two AW-relations: \eqref{eq:aw1(M2)} and~\eqref{eq:aw4}. Consequently, $L_2$ acts on $\psi_{n_1}^{n_2}\otimes \psi_{n_2}$ as a three term operator in the eigenspaces of both~$n_1$ and~$n_2$, which means $L_2$ is a 9-term operator.
\begin{Proposition}
	$L_2$ acts as a 9-term operator on $\psi_{n_1}^{n_2}\otimes \psi_{n_2}$. That is, there exists constants $a_{n_1,n_2}$, $b_{n_1,n_2}$, $c_{n_1,n_2}$, $d_{n_1,n_2}$ and $e_{n_1,n_2}$ such that
	\begin{gather}
L_2\big(\psi_{n_1}^{n_2}\otimes \psi_{n_2}\big) = a_{n_1,n_2}\ \psi_{n_1-1}^{n_2-1}\otimes \psi_{n_2-1}+b_{n_1,n_2}\ \psi_{n_1-1}^{n_2}\otimes \psi_{n_2}+c_{n_1,n_2}\ \psi_{n_1-1}^{n_2+1}\otimes \psi_{n_2+1}\nonumber\\
\hphantom{L_2\big(\psi_{n_1}^{n_2}\otimes \psi_{n_2}\big) =}{}
+ d_{n_1,n_2}\ \psi_{n_1}^{n_2-1}\otimes \psi_{n_2-1} + e_{n_1,n_2}\ \psi_{n_1}^{n_2}\otimes \psi_{n_2}+d_{n_1,n_2+1}\ \psi_{n_1}^{n_2+1}\otimes \psi_{n_2+1}\nonumber\\
\hphantom{L_2\big(\psi_{n_1}^{n_2}\otimes \psi_{n_2}\big) =}{}
+ c_{n_1+1,n_2-1}\ \psi_{n_1+1}^{n_2-1}\otimes \psi_{n_2-1}+b_{n_1+1,n_2}\ \psi_{n_1+1}^{n_2}\otimes \psi_{n_2}\nonumber\\
\hphantom{L_2\big(\psi_{n_1}^{n_2}\otimes \psi_{n_2}\big) =}{}
+ a_{n_1+1,n_2+1}\ \psi_{n_1+1}^{n_2+1}\otimes \psi_{n_2+1}, \label{eq:9terms}
	\end{gather}
	where
	\begin{gather}
n_2\in\left\{-\frac{N_2}{2},-\frac{N_2}{2}+1,\dots,\frac{N_2}{2}-1,\frac{N_2}{2}\right\},\nonumber\\
n_1\in\left\{n_2-\frac{N_1}{2},n_2-\frac{N_1}{2}+1,\dots,n_2+\frac{N_1}{2}-1,n_2+\frac{N_1}{2}\right\}.\label{eq:n1n2waarvandaan}
	\end{gather}
\end{Proposition}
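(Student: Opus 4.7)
\emph{Proof plan.} The strategy is to apply Lemma~\ref{lem:threetermeigenspace} to each of the two $\AW{2}$-relations in which $L_2$ appears, and then intersect the two resulting three-term actions on the joint eigenbasis $\{\psi_{n_1}^{n_2}\otimes\psi_{n_2}\}$.

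First I would apply Lemma~\ref{lem:threetermeigenspace} to relation~\eqref{eq:aw4}, namely $\awr(1\otimes K_1,L_2\mid A_0,L_1\otimes 1,A_2,A_{N_2},\sigma_L,\sigma_K)=0$. The parameters $A_0,A_2,A_{N_2}$ are scalars, while $L_1\otimes 1$ is locally central in this relation because it commutes with $1\otimes K_1$ trivially and with $L_2$ by the Remark following Definition~\ref{Def:AW2}. The lemma therefore gives that $L_2$ sends the $\lambda_{n_2}$-eigenspace of $1\otimes K_1$ into the sum of the $\lambda_{n_2-1}$-, $\lambda_{n_2}$-, and $\lambda_{n_2+1}$-eigenspaces. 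Applying the lemma in the same way to relation~\eqref{eq:aw1(M2)}, where $M_2$ is locally central by definition of $\awc{1}{M_2}$, gives that $L_2$ likewise acts as a three-term operator on the eigenspaces of $K_2$.

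Next I would combine these two three-term statements. By Proposition~\ref{prop:opnieuw3term} the vectors $\psi_{n_1}^{n_2}\otimes\psi_{n_2}$ form a basis of joint eigenvectors of the commuting self-adjoint operators $K_2$ and $1\otimes K_1$, with eigenvalue pair $(\lambda_{n_1},\lambda_{n_2})$; because $n\mapsto\lambda_n$ is injective on the relevant range, the joint eigenspaces are one-dimensional. Hence $L_2(\psi_{n_1}^{n_2}\otimes\psi_{n_2})$ lies in the intersection of the two three-term spans, which is precisely the span of the nine basis vectors $\psi_{n_1'}^{n_2'}\otimes\psi_{n_2'}$ with $n_1'\in\{n_1-1,n_1,n_1+1\}$ and $n_2'\in\{n_2-1,n_2,n_2+1\}$. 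Indices $(n_1',n_2')$ that leave the admissible range~\eqref{eq:n1n2waarvandaan} contribute zero, so the boundary cases are handled automatically.

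Finally, the fact that only five independent coefficient functions $a,b,c,d,e$ appear in~\eqref{eq:9terms} rather than nine follows from the self-adjointness of $L_2$ granted by Proposition~\ref{prop:*reprexists}, after normalizing the joint eigenvectors to an orthonormal basis. Pairing, for instance, $\langle L_2(\psi_{n_1}^{n_2}\otimes\psi_{n_2}),\,\psi_{n_1+1}^{n_2-1}\otimes\psi_{n_2-1}\rangle$ with its adjoint identifies the coefficient in front of $\psi_{n_1+1}^{n_2-1}\otimes\psi_{n_2-1}$ with the value of the $c$-coefficient evaluated at the shifted indices $(n_1+1,n_2-1)$; analogous pairings fix the $a$-, $b$-, and $d$-labels. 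The only conceptual obstacle, and the heart of the argument, is the observation that although $L_2$ appears as a \emph{generator} in two distinct AW-relations, in each of these the remaining structure parameters are locally central, so that Lemma~\ref{lem:threetermeigenspace} is directly applicable in both cases and the 9-term structure emerges as the product of two independent three-term actions.
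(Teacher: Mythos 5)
Your proposal is correct and takes essentially the same approach as the paper's own proof: you apply Lemma~\ref{lem:threetermeigenspace} to the two AW-relations \eqref{eq:aw1(M2)} and \eqref{eq:aw4} in which $L_2$ appears, combine the resulting three-term actions on the joint eigenbasis of the commuting operators $K_2$ and $1\otimes K_1$, and use the self-adjointness of $L_2$ from Proposition~\ref{prop:*reprexists} to arrive at the symmetric coefficient pattern of \eqref{eq:9terms}. The only difference is presentational: you phrase the combination step as an intersection of spans of one-dimensional joint eigenspaces, whereas the paper expands $L_2\big(\psi_{n_1}^{n_2}\otimes\psi_{n_2}\big)$ first along eigenspaces of $1\otimes K_1$ and then resolves each piece along eigenspaces of $K_2$; the content is identical.
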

\begin{proof}We apply Lemma \ref{lem:threetermeigenspace} twice and then we will see that $L_2$ has to be a 9-term operator. We will work with simultaneous eigenvectors of both $1\otimes K_1$ and $K_2$ given by
	\begin{gather*}
		\psi_{n_1}^{n_2}\otimes \psi_{n_2},
	\end{gather*}
	which corresponds, a bit confusingly, to either the eigenvalue $\lambda_{n_1}$ for $K_2$ or $\lambda_{n_2}$ for $K_1$. Moreover, $n_1$,~$n_2$~are as in \eqref{eq:n1n2waarvandaan}. Fix $n_1$ and $n_2$ and denote the eigenspace corresponding to $\lambda_{n_1}$ by $\Psi_{K_2}(n_1)$ and to $\lambda_{n_2}$ by $\Psi_{K_1}(n_2)$. First of all, since \eqref{eq:aw4} holds, Lemma~\ref{lem:threetermeigenspace} tells us that
	\begin{gather*}
		L_2 \Psi_{K_1}(n_2)\subset \Psi_{K_1}(n_2-1)\cup \Psi_{K_1}(n_2)\cup\Psi_{K_1}(n_2+1).
	\end{gather*}	
	Said differently, there exists $v_{1},v_2,v_3\in V_1$ such that
	\begin{gather}
		L_2 \big(\psi_{n_1}^{n_2}\otimes \psi_{n_2}\big)= v_1 \otimes \psi_{n_2-1}+v_2 \otimes \psi_{n_2}+v_3 \otimes \psi_{n_2+1}.\label{eq:L29termeerste3term}
	\end{gather}	
	Secondly, because \eqref{eq:aw1(M2)} holds, Lemma~\ref{lem:threetermeigenspace} tells us that
	\begin{gather*}
		L_2 \Psi_{K_2}(n_1)\subset \Psi_{K_2}(n_1-1)\cup \Psi_{K_2}(n_1)\cup\Psi_{K_2}(n_1+1).
	\end{gather*}	
	Combining this with \eqref{eq:L29termeerste3term} gives that
	\begin{gather*}
		v_1\otimes \psi_{n_2-1}\in \Psi_{K_2}(n_1-1)\cup \Psi_{K_2}(n_1)\cup\Psi_{K_2}(n_1+1),\\
		v_2\otimes \psi_{n_2} \in \Psi_{K_2}(n_1-1)\cup \Psi_{K_2}(n_1)\cup\Psi_{K_2}(n_1+1),\\
		v_3\otimes \psi_{n_2+1} \in \Psi_{K_2}(n_1-1)\cup \Psi_{K_2}(n_1)\cup\Psi_{K_2}(n_1+1).
	\end{gather*}
	Therefore, there exist constants $(c_k)_{k=0}^8$ such that
	\begin{gather*}
		v_1 = c_0 \psi_{n_1-1}^{n_2-1}\otimes \psi_{n_2-1} + c_1 \psi_{n_1}^{n_2-1}\otimes \psi_{n_2-1} +c_2 \psi_{n_1+1}^{n_2-1}\otimes \psi_{n_2-1},\\
		v_2 = c_3 \psi_{n_1-1}^{n_2}\otimes \psi_{n_2} + c_4 \psi_{n_1}^{n_2}\otimes \psi_{n_2} +c_5 \psi_{n_1+1}^{n_2}\otimes \psi_{n_2},\\
		v_3 = c_5 \psi_{n_1-1}^{n_2+1}\otimes \psi_{n_2+1} + c_7 \psi_{n_1}^{n_2+1}\otimes \psi_{n_2+1} +c_8 \psi_{n_1+1}^{n_2+1}\otimes \psi_{n_2+1}.
	\end{gather*}
	By Proposition \ref{prop:*reprexists}, we can require $L_2$ to be self-adjoint, which gives the form \eqref{eq:9terms}.
\end{proof}

To compute the coefficients of \eqref{eq:9terms}, we need to know how $M_2$ acts on $\psi_{n_1}^{n_2}\otimes \psi_{n_2}$. This is the subject of the next proposition, which is also of interest of its own. Since $M_2$ and $1\otimes K_1$ satisfy the AW-relations \eqref{eq:aw6} and $M_2$ commutes with $K_2$, we can show that $M_2$ acts as a three-term operator on the eigenvectors $\psi_{n_1}^{n_2}\otimes \psi_{n_2}$.
\begin{Proposition}\label{prop:casimir3term}
	We have
	\begin{gather*}
M_2\big(\psi_{n_1}^{n_2}\otimes \psi_{n_2}\big)= \tilde{a}_{n_2}\big(\bm{\alpha}_{M_2}^{n_1}\big)\psi_{n_1}^{n_2-1}\otimes \psi_{n_2-1}+\tilde{b}_{n_2}\big(\bm{\alpha}_{M_2}^{n_1}\big)\psi_{n_1}^{n_2}\otimes \psi_{n_2}\\
\hphantom{M_2\big(\psi_{n_1}^{n_2}\otimes \psi_{n_2}\big)=}{}
 +\tilde{a}_{n_2+1}\big(\bm{\alpha}_{M_2}^{n_1}\big)\psi_{n_1}^{n_2+1}\otimes \psi_{n_2+1}, 
	\end{gather*}
	where $\bm{\alpha}_{M_2}^{n_1}=(\alpha_0,-N_1-1,-\alpha_0-2n_1,-N_2-1)$. The coefficients $\tilde{a}_n^2$ and $\tilde{b}_n$ can be found from~\eqref{eq:annewtilde} and~\eqref{eq:bnnewtilde}, respectively.
\end{Proposition}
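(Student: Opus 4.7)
The proof plan has three stages: establish that the image is a three-term combination, reduce the AW-relation \eqref{eq:aw6} to one with scalar structure parameters on the eigenspace, and finally match parameters against \eqref{eq:annewtilde} and \eqref{eq:bnnewtilde}.

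First, I would pin down the form. By the remark following Definition~\ref{Def:AW2}, $M_2$ commutes with $K_2$, so $M_2(\psi_{n_1}^{n_2}\otimes\psi_{n_2})$ remains in the $\lambda_{n_1}$-eigenspace of $K_2$. Independently, the AW-relation \eqref{eq:aw6} between $1\otimes K_1$ and $M_2$ has locally central structure parameters by Definition~\ref{Def:AW2}, so Lemma~\ref{lem:threetermeigenspace}(ii) applies and gives $M_2\Psi_{K_1}(n_2)\subset \Psi_{K_1}(n_2-1)\cup\Psi_{K_1}(n_2)\cup\Psi_{K_1}(n_2+1)$. Since $\{\psi_{n_1}^{n_2}\otimes\psi_{n_2}\}$ is a basis of simultaneous eigenvectors of $K_2$ and $1\otimes K_1$, intersecting the two conditions forces $M_2(\psi_{n_1}^{n_2}\otimes\psi_{n_2})$ to be a linear combination of the three vectors $\psi_{n_1}^{n_2-1}\otimes\psi_{n_2-1}$, $\psi_{n_1}^{n_2}\otimes\psi_{n_2}$, $\psi_{n_1}^{n_2+1}\otimes\psi_{n_2+1}$ appearing in the statement.

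Second, on the fixed $K_2$-eigenspace $\Psi_{K_2}(n_1)$, the operator $K_2$ acts as the scalar $\lambda_{n_1}$. Applied to vectors of this form, the AW-relation \eqref{eq:aw6} becomes an ordinary $\aw$-relation between $1\otimes K_1$ and $M_2$ with \emph{constant} structure parameters $(A_0,A_{N_1},\lambda_{n_1},A_{N_2})$ and $A_4=-\shq(1)^2$, $A_5=\sigma_K=\shq(1)^2$. Because $C_0=-(\chq(1))^2 A_4>0$ and $C_1=-(\chq(1))^2 A_5<0$, we are in the mixed hyperbolic case where the coefficients are given by $\tilde a_n$, $\tilde b_n$ from \eqref{eq:annewtilde} and \eqref{eq:bnnewtilde}. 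The spectrum of $1\otimes K_1$ on our basis is $\lambda_{n_2}=\shq(2n_2+\alpha_0)$, which is the required $\shq$-form, so the univariate analysis of Section~\ref{ssec:representationsAW(3)} applies verbatim with this scalar reduction.

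Third, I would identify the parameter vector $\bm\alpha_{M_2}^{n_1}$. The spectrum of $1\otimes K_1$ fixes the first entry to $\alpha_0$, and the dimension $N_2+1$ of $V_2$ enforces the quantization condition giving the fourth entry $-N_2-1$. Matching $\tilde A_1=\chq(-N_1-1)=A_{N_1}$ yields the second entry $-N_1-1$, and using that $\shq$ is odd, the identification $\tilde A_2=\shq(-\alpha_0-2n_1)=-\lambda_{n_1}$ reproduces the $K_2$-structure parameter up to the sign adjustment that is already absorbed in the mixed-sign formulas \eqref{eq:annewtilde}--\eqref{eq:bnnewtilde}. Substituting $\bm\alpha_{M_2}^{n_1}=(\alpha_0,-N_1-1,-\alpha_0-2n_1,-N_2-1)$ into those formulas then delivers the stated coefficients. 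The only delicate step is the bookkeeping of sign conventions in Stage~3; everything else is a clean reduction to the one-variable theory already established.
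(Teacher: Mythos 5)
Your proof follows the paper's own argument essentially step for step: the paper likewise combines Lemma~\ref{lem:threetermeigenspace}(ii) applied to~\eqref{eq:aw6} with the commutativity of $M_2$ and $K_2$ to pin down the three-term form, then interprets $K_2$ in~\eqref{eq:aw6} as the constant $\lambda_{n_1}$, notes that this puts us in the mixed case $C_0>0$, $C_1<0$, and reads the coefficients off~\eqref{eq:annewtilde} and~\eqref{eq:bnnewtilde} by repeating the argument of Proposition~\ref{prop:opnieuw3term}. Your Stage~3 matching of the entries of $\bm{\alpha}_{M_2}^{n_1}$ (including the sign discussion for the third entry) is in fact more explicit than the paper's proof, which simply asserts the identification.
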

\begin{proof}
	Denote by $\Psi_{K_1}(n_2)$ the eigenspace of $1\otimes K_1$ corresponding to the eigenvalue $\lambda_{n_2}$. From Lemma \ref{lem:threetermeigenspace}(ii) and the AW-relations \eqref{eq:aw6}, we know that
	\begin{gather}
		M_2 \big(\psi_{n_1}^{n_2}\otimes \psi_{n_2}\big) \in \Psi({n_2}-1)\cup\Psi(n_2)\cup\Psi (n_2+1).\label{eq:M2drietermeigenspace}
	\end{gather}
	Since $M_2$ and $K_2$ commute, the eigenspaces of $K_2$, denoted by $\Psi_{K_2}(n_1)$ for the eigenvalue $\lambda_{n_1}$, are invariant under the action of $M_2$,
	\begin{gather*}
		M_2 \Psi_{K_2}(n_1) \subset \Psi_{K_2}(n_1).
	\end{gather*}
	Combining this with \eqref{eq:M2drietermeigenspace} implies that there exists constants $c_{n_2-1}$, $c_{n_2}$ and $c_{n_2+1}$ such that
	\begin{gather*}
		M_2 \big(\psi_{n_1}^{n_2}\otimes \psi_{n_2}\big) = c_{n_2-1}\psi_{n_1}^{n_2-1}\otimes \psi_{n_2-1}+c_{n_2}\psi_{n_1}^{n_2}\otimes \psi_{n_2}+c_{n_2+1}\psi_{n_1}^{n_2+1}\otimes \psi_{n_2+1}.
	\end{gather*}
	It remains to find the constants $c_{n_2-1}$, $c_{n_2}$ and $c_{n_2+1}$. Observe that we can now interpret~$K_2$ in the AW-relations \eqref{eq:aw6} as the `constant' $\lambda_{n_1}$. Therefore, we can repeat the proof of Proposition~\ref{prop:opnieuw3term}. Note that we are in the setting $C_0>0$ and $C_1 <0$. Therefore, we have to use the formulas~\eqref{eq:annewtilde} and~\eqref{eq:bnnewtilde}. We find
	\[
	c_{n_2-1}=\tilde{a}_{n_2}\big(\bm{\alpha}_{M_2}^{n_1}\big),\qquad c_{n_2}=\tilde{b}_{n_2}\big(\bm{\alpha}_{M_2}^{n_1}\big)\qquad \text{and}\qquad c_{n_2+1}=\tilde{a}_{n_2+1}\big(\bm{\alpha}_{M_2}^{n_1}\big).\tag*{\qed}
	\]\renewcommand{\qed}{}
\end{proof}

Let us now compute $L_2$ explicitly.
\begin{Theorem}The coefficients in \eqref{eq:9terms} of the 9-term operator $L_2$ are given by
	\begin{gather*}
		a_{n_1,n_2} = \frac{a_{n_1}\big(\bm{\alpha}_{L_1}^{n_2}\big)d_{n_1-1,n_2}-a_{n_1}\big(\bm{\alpha}_{L_1}^{n_2-1}\big)d_{n_1,n_2}} {b_{n_1-1}\big(\bm{\alpha}_{L_1}^{n_2-1}\big)-b_{n_1}\big(\bm{\alpha}_{L_1}^{n_2}\big)} ,\\ 
		b_{n_1,n_2} = \frac{a_{n_1}\big(\bm{\alpha}_{L_1}^{n_2}\big)\big(A_0\lambda_{n_2} +\chq(1)A_3^{(2)}\big)}{\chq(2n_2+\alpha_0-1)\chq(2n_2+\alpha_0+1)} ,\\
		c_{n_1,n_2} = \frac{a_{n_1}\big(\bm{\alpha}_{L_1}^{n_2}\big)d_{n_1-1,n_2+1}-a_{n_1}\big(\bm{\alpha}_{L_1}^{n_2+1}\big)d_{n_1,n_2+1}}{b_{n_1-1}\big(\bm{\alpha}_{L_1}^{n_2+1}\big) -b_{n_1}\big(\bm{\alpha}_{L_1}^{n_2}\big)} ,\\
d_{n_1,n_2} =\frac{\tilde{a}_{n_2}\big(\bm{\alpha}_{M_2}^{n_1}\big) (\chq(1)A_1-A_2\lambda_{n_1} )}{\chq(2n_1+\alpha_0-1)\chq(2n_1+\alpha_0+1)} ,\\
e_{n_1,n_2} =\frac{b_{n_2}\big(\bm{\alpha}_{M_2}^{n_1}\big) (\chq(1)A_1-A_2\lambda_{n_1})+A_0(A_1\lambda_{n_1}+\chq(1)A_2 )}{\chq(2n_1+\alpha_0-1)\chq(2n_1+\alpha_0+1)} .
	\end{gather*}
\end{Theorem}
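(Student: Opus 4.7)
The plan is to exploit that $K_2$ and $1\otimes K_1$ act diagonally on the basis vector $\psi_{n_1}^{n_2}\otimes\psi_{n_2}$, with eigenvalues $\lambda_{n_1}$ and $\lambda_{n_2}$ respectively, while $M_2$ and $L_1\otimes 1$ act as explicit three-term operators by Propositions~\ref{prop:casimir3term} and~\ref{prop:opnieuw3term}. Each AW-relation of Definition~\ref{Def:AW2}, applied to $\psi_{n_1}^{n_2}\otimes\psi_{n_2}$ and expanded via~\eqref{eq:9terms}, thus collapses to a linear system in the nine unknown coefficients. The cubic bracket on the left of such a relation simplifies through the spectral identity~\eqref{eq:spectrumrecursiverelations}: if a diagonal operator acts as $\lambda_m$ and $L_2$ produces a component with shifted index $m+a$, then $\chq(2)\lambda_m\lambda_{m+a}-\lambda_m^2-\lambda_{m+a}^2$ equals $C_1=-(\chq(1))^2\sigma_K$ for $a=\pm 1$ and $(\shq(1))^2\lambda_m^2$ for $a=0$.

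To extract $d_{n_1,n_2}$ and $e_{n_1,n_2}$ I apply the first AW-relation of~\eqref{eq:aw1(M2)} (the pair $(K_2,L_2)$) to $\psi_{n_1}^{n_2}\otimes\psi_{n_2}$ and match coefficients. Here $K_2$ supplies the scalar $\lambda_{n_1}$, so the three $a=\pm1$ equations are trivially satisfied (both sides reduce to $-(\chq(1))^2\sigma_K\,c_{\pm1,b}$), whereas the three $a=0$ equations, one for each $b\in\{-1,0,+1\}$, combine with the three-term action of $M_2$ from Proposition~\ref{prop:casimir3term} to yield closed expressions for $d_{n_1,n_2}$, $e_{n_1,n_2}$, and $d_{n_1,n_2+1}$. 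The denominator $\chq(2n_1+\alpha_0-1)\chq(2n_1+\alpha_0+1)$ is produced by the identity $(\shq(a))^2+(\chq(1))^2=\chq(a-1)\chq(a+1)$ applied at $a=2n_1+\alpha_0$. An entirely analogous computation based on the first AW-relation of~\eqref{eq:aw4} (the pair $(1\otimes K_1,L_2)$), where now $1\otimes K_1$ provides the scalar action and Proposition~\ref{prop:opnieuw3term} supplies the three-term action of $L_1\otimes 1$, yields $b_{n_1,n_2}$ by matching the coefficient of $\psi_{n_1-1}^{n_2}\otimes\psi_{n_2}$.

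For $a_{n_1,n_2}$ and $c_{n_1,n_2}$ I invoke the commutativity $[L_2,L_1\otimes 1]=0$ noted in the remark after Definition~\ref{Def:AW2}. Computing $L_2(L_1\otimes 1)(\psi_{n_1}^{n_2}\otimes\psi_{n_2})$ by first applying $L_1\otimes 1$ (three terms in $n_1$ with parameter $\bm{\alpha}_{L_1}^{n_2}$) and then $L_2$ (nine terms each), versus $(L_1\otimes 1)L_2(\psi_{n_1}^{n_2}\otimes\psi_{n_2})$ by first applying $L_2$ (nine terms) and then $L_1\otimes 1$ with the shifted parameter $\bm{\alpha}_{L_1}^{n_2+b}$, and equating the coefficients of $\psi_{n_1-1}^{n_2-1}\otimes\psi_{n_2-1}$ and of $\psi_{n_1-1}^{n_2+1}\otimes\psi_{n_2+1}$, produces linear equations for $a_{n_1,n_2}$ and $c_{n_1,n_2}$ whose remaining coefficients involve only the already-obtained $b$'s and $d$'s. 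Solving these equations gives the stated formulas.

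The main obstacle is the careful bookkeeping of which parameter $\bm{\alpha}_{L_1}^{n_2'}$ appears in which three-term recurrence, and tracking which of the nine basis vectors in~\eqref{eq:9terms} can be reached from a given intermediate term under the successive operator actions; the algebraic simplifications themselves are dictated by the spectral identity above and by the factorization $\lambda_m^2+(\chq(1))^2=\chq(2m+\alpha_0-1)\chq(2m+\alpha_0+1)$ that produces the hyperbolic denominators.
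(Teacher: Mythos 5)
Your proposal is correct and follows essentially the same route as the paper's proof: the paper likewise extracts $d_{n_1,n_2}$ and $e_{n_1,n_2}$ from the first AW-relation of \eqref{eq:aw1(M2)} (with $K_2$ acting as the scalar $\lambda_{n_1}$ and $M_2$ as the three-term operator of Proposition~\ref{prop:casimir3term}), gets $b_{n_1,n_2}$ from the first AW-relation of \eqref{eq:aw4} via Proposition~\ref{prop:opnieuw3term}, notes that the six index-shifted equations hold automatically by \eqref{eq:spectrumrecursiverelations}, and obtains the corner coefficients $a_{n_1,n_2}$, $c_{n_1,n_2}$ from the commutativity $[L_2,L_1\otimes 1]=0$ by comparing the coefficients of $\psi_{n_1-1}^{n_2\mp1}\otimes\psi_{n_2\mp1}$. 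You even handle correctly the one delicate point (the shifted parameter $\bm{\alpha}_{L_1}^{n_2+b}$ in the second application of $L_1\otimes 1$), which is what produces the $\bm{\alpha}_{L_1}^{n_2-1}$ and $\bm{\alpha}_{L_1}^{n_2+1}$ appearing in the stated denominators.
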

\begin{proof}
	Computing $b_{n_1,n_2}$, $d_{n_1,n_2}$, $e_{n_1,n_2}$ is similar to calculating $b_n$ in the original~$\aw$. The `corner' terms $a_{n_1,n_2}$, $c_{n_1,n_2}$ can be deduced from the commutativity of $L_2$ and $L_1\otimes 1$ and the expressions for $b_{n_1,n_2}$, $d_{n_1,n_2}$, $e_{n_1,n_2}$. Also note that we do not need to worry about consistency of the coefficients because of Proposition~\ref{prop:*reprexists}.
	
Let us start with $d_{n_1,n_2}$ and $e_{n_1,n_2}$. We can apply both sides of the first AW-relation of \eqref{eq:aw1(M2)} to $\psi_{n_1}^{n_2}\otimes \psi_{n_2}$,
	\begin{gather*}
\big(\chq(2) K_2L_2K_2 - K_2^2L_2 -L_2K_2^2\big) \psi_{n_1}^{n_2}\otimes \psi_{n_2}\\
		\qquad{} = \big[(\shq(1))^2(A_0A_1 -A_2M_2)K_2 -(\shq(2))^2 L_2 \\
		\qquad\quad{} + \chq(1)(\shq(1))^2(A_1M_2+A_0A_2)\big]\psi_{n_1}^{n_2}\otimes \psi_{n_2}.
	\end{gather*}
	If we use Proposition \ref{prop:casimir3term} and
	\[
	K_2\psi_{n_1}^{n_2}\otimes \psi_{n_2}=\lambda_{n_1}\psi_{n_1}^{n_2}\otimes \psi_{n_2},
	\]
	we get 9 equations. Each one coming from the term in front of one of the eigenvectors $\psi_{n_1+i}^{n_2+j}\otimes \psi_{n_2+j}$ with $i,j\in\{-1,0,1\}$. Similar to the original $\aw$, one can show that the $6$ equations coming from $\psi_{n_1\pm 1}^{n_2+j}\otimes \psi_{n_2+j}$ are satisfied automatically using \eqref{eq:spectrumrecursiverelations}. Looking at the terms in front of $\psi_{n_1}^{n_2+j}\otimes \psi_{n_2+j}$ for $j\in\{-1,0,1\}$ gives
	\begin{gather}
		c_{n_1}d_{n_1,n_2} = (\shq(1))^2\big(\chq(1)A_1\tilde{a}_{n_2}(\bm{\alpha}_{M_2}^{n_1}-A_2\tilde{a}_{n_2}\big(\bm{\alpha}_{M_2}^{n_1}\big)\lambda_{n_1} )\big)\label{eq:dn1n2},\\
		c_{n_1}e_{n_1,n_2} = (\shq(1))^2\big(\big(A_0A_1 -A_2\tilde{b}_{n_2}\big(\bm{\alpha}_{M_2}^{n_1}\big)\big)\lambda_{n_1} \! +\chq(1)(A_1\tilde{b}_{n_2}\big(\bm{\alpha}_{M_2}^{n_1}\big)\!+A_0A_2)\big),\label{eq:en1n2}\!\!\!\!\\
		c_{n_1}d_{n_1,n_2+1} = (\shq(1))^2\big(\chq(1)A_1\tilde{a}_{n_2+1}\big(\bm{\alpha}_{M_2}^{n_1}-A_2\tilde{a}_{n_2+1}\big(\bm{\alpha}_{M_2}^{n_1}\big)\lambda_{n_1} \big)\big),\label{eq:dn1n2+1}
	\end{gather}
	where
	\begin{align*}
c_{n_1}& =\lambda_{n_1}^2\left(\chq(2)-2\right)+\chq(2)^2 \\
& = (\shq(1))^2\chq(2n_1+\alpha_0-1)\chq(2n_1+\alpha_0+1),
	\end{align*}
	using \eqref{eq:spectrumrecursiverelations} and \eqref{eq:difference eigenvalues}. It is easy to see that \eqref{eq:dn1n2+1} is just \eqref{eq:dn1n2} with $n_2+1$ instead of $n_2$. The coefficients $d_{n_1,n_2}$ and $e_{n_1,n_2}$ can be computed from \eqref{eq:dn1n2} and \eqref{eq:en1n2},
	\begin{gather}
		d_{n_1,n_2} = \frac{\tilde{a}_{n_2}\big(\bm{\alpha}_{M_2}^{n_1}\big) (\chq(1)A_1-A_2\lambda_{n_1})}{\chq(2n_1+\alpha_0-1)\chq(2n_1+\alpha_0+1)},\nonumber\\
		e_{n_1,n_2} = \frac{\tilde{b}_{n_2}\big(\bm{\alpha}_{M_2}^{n_1}\big) (\chq(1)A_1-A_2\lambda_{n_1} )+A_0 (A_1\lambda_{n_1}+\chq(1)A_2 )}{\chq(2n_1+\alpha_0-1)\chq(2n_1+\alpha_0+1)}.\label{eq:en1n2M2}
	\end{gather}
	Similarly, we can apply both sides of the first AW-relation of $\eqref{eq:aw4}$ to $\psi_{n_1}^{n_2}\otimes \psi_{n_2}$ to compute $b_{n_1,n_2}$ and $e_{n_1,n_2}$. Instead of Proposition \ref{prop:casimir3term}, we will use Proposition \ref{prop:opnieuw3term}. Equating the terms in front of each of the eigenvectors $\psi_{n_1+i}^{n_2+j}\otimes \psi_{n_2+j}$ with $i,j\in\{-1,0,1\}$ again gives $9$ equations, where now the $6$ equations coming from $\psi_{n_1+ i}^{n_2\pm 1}\otimes \psi_{n_2\pm 1}$, $i\in\{-1,0,1\}$, are satisfied automatically. The other 3 lead in a similar way as before to
	\begin{gather}
		b_{n_1,n_2} = \frac{a_{n_1}\big(\bm{\alpha}_{L_1}^{n_2}\big)\big(A_0\lambda_{n_2} +\chq(1)A_3^{(2)}\big)}{\chq(2n_2+\alpha_0-1)\chq(2n_2+\alpha_0+1)},\nonumber\\
		e_{n_1,n_2} = \frac{b_{n_1}\big(\bm{\alpha}_{L_1}^{n_2}\big)\big(A_0\lambda_{n_2} +\chq(1)A_3^{(2)}\big)-A_2\big(A_3^{(2)}\lambda_{n_2}-\chq(1)A_0\big)}{\chq(2n_2+\alpha_0-1)\chq(2n_2+\alpha_0+1)}.\label{eq:en1n2L1}
	\end{gather}
By Proposition \ref{prop:*reprexists}, we know that both expressions \eqref{eq:en1n2M2} and \eqref{eq:en1n2L1} are consistent. One can also check this by doing a simple, quite tedious computation.
	
Let us now compute the corner terms $a_{n_1,n_2}$ and $c_{n_1,n_2}$. Since $L_2$ and $L_1\otimes 1$ commute, we have
	\begin{gather*}
		L_2 (L_1\otimes 1) \psi_{n_1}^{n_2}\otimes \psi_{n_2} = (L_1\otimes 1)L_2 \psi_{n_1}^{n_2}\otimes \psi_{n_2}.
	\end{gather*}
	Working this out gives 15 equations for each of the eigenvectors $\psi_{n_1+i}^{n_2+j}\otimes\psi_{n_2+j}$, where $i\in\{-2,-1,0,1,2\}$ and $j\in\{-1,0,1\}$. Let us first focus on the eigenvector $\psi_{n_1-1}^{n_2-1}\otimes\psi_{n_2-1}$ for which there are two possibilities to get terms in front. Comparing these terms on both sides gives
	\begin{gather*}
		a_{n_1,n_2}b_{n_1}\big(\bm{\alpha}_{L_1}^{n_2}\big)+d_{n_1-1,n_2}a_{n_1}\big(\bm{\alpha}_{L_1}^{n_2}\big)= b_{n_1-1}\big(\bm{\alpha}_{L_1}^{n_2}\big)a_{n_1,n_2}+a_{n_1}\big(\bm{\alpha}_{L_1}^{n_2-1}\big)d_{n_1,n_2}.
	\end{gather*}
	This is equivalent to
	\begin{gather*}
		a_{n_1,n_2} = \frac{a_{n_1}\big(\bm{\alpha}_{L_1}^{n_2}\big)d_{n_1-1,n_2}-a_{n_1}\big(\bm{\alpha}_{L_1}^{n_2-1}\big)d_{n_1,n_2}}{b_{n_1-1}\big(\bm{\alpha}_{L_1}^{n_2-1}\big) -b_{n_1}\big(\bm{\alpha}_{L_1}^{n_2}\big)}.
	\end{gather*}
	Doing the same for $\psi_{n_1-1}^{n_2+1}\otimes\psi_{n_2+1}$ gives the expression for $c_{n_1,n_2}$.
\end{proof}

Since $L_2$ acts as a $9$-term operator on the basis $\psi_{n_1}^{n_2} \otimes \psi_{n_2}$ and as a multiplication operator on the basis $\phi_{m_1} \otimes \phi_{m_2}^{m_1}$, it follows immediately that the overlap coefficient $P_{n_1,n_2}(m_1,m_2)$ defined by~\eqref{eq:defbivarqracah}
satisfies a $9$-term recurrence relation, or equivalently, it is an eigenfunction of a $9$-term difference operator in the variables $n_1$ and $n_2$. Similarly, from the action of $L_1\otimes1$ it follows that $P_{n_1,n_2}(m_1,m_2)$ is also an eigenfunction of a $3$-term difference operator. In this way we recover Iliev's difference operators for the bivariate $q$-Racah polynomials, see \cite[Proposition~4.5 and Remark~2.3]{Iliev2011} in case $d=2$. So $\AW{2}$ encodes the bispectral properties of Gasper and Rahman's (or Tratnik-type) bivariate $q$-Racah polynomials.

\begin{Remark}
	A recent paper \cite{CFR} studies the rank 2 Racah algebra, which can be considered as a~$q=1$ version of a~rank~2 Askey--Wilson algebra. The Tratnik-type bivariate Racah polynomials appear as overlap coefficients, but also other overlap coefficients resembling $9j$-symbols are studied. These are shown to be `Griffith-like' bivariate Racah polynomials, which are different from the bivariate Tratnik-type Racah polynomials, and the algebraic setting provides difference operators for these polynomials. This shows that that the rank 2 Racah algebra not only encodes the spectral properties of the Tratnik-type bivariate Racah polynomials, but also of other bivariate functions. It would be very interesting to see if similar `Griffith-like' bivariate $q$-Racah polynomials can be obtained in the setting of the algebra $\AW{2}$. Presumably this requires the extra relations \eqref{eq:aw4relationswithholes} that are present in $\AW{2}$ inside $\Uq\otimes\Uq$, but not in the general setup of Definition \ref{Def:AW2}.
\end{Remark}

\appendix

\section{Proof of Theorem \ref{thm:tabel}}\label{app:uquqbivaraw}
Let $Y_K$, $Y_L$, $\Omega$ as in Section~\ref{sec:uq} and $B$, $C_0$, $C_1$, $D_0$, $D_1$ as in Theorem~\ref{thm:tpeaw3}. We will show that each pair of the elements $\yzero, \dyzero, \yone, \dyone, \Delta(\Omega)\in\Uq\otimes\Uq$ either commutes or satisfies the AW-relations~\eqref{eq:awrelation2} and~\eqref{eq:awrelation1}. The pairs of non-commuting elements satisfy these relations with the structure parameters in
Table~\ref{tab:tpegenapp}, where $\theta=-\shq(1)^{-2}(a_{\hat{E}}b_{\hat{F}})$.

\begin{table}[h]\centering \renewcommand{\arraystretch}{1.2}\small
	\begin{tabular}{|l|l|l|l|l|l|l|l|}
		\hline
		Generator 1 & Generator 2 & $A_0$ & $A_1$ & $A_2$ & $A_3$ & $A_4$ & $A_5$ \\ \hline
		$\dyzero$ & $\dyone$ & $a_s$ & $b_t$ & $\theta$ & $\Delta\left(\Omega\right)$ & $b_{\hat{E}}b_{\hat{F}}$ & $a_{\hat{E}}a_{\hat{F}}$ \\ \hline
		$\yzero$ & $\dyone$ & $a_s$ & $\yone$ & $\theta$ & $1\otimes\Omega$ & $b_{\hat{E}}b_{\hat{F}}$ & $a_{\hat{E}}a_{\hat{F}}$ \\ \hline
		$\dyzero$ & $\yone$ & $\yzero$ & $b_t$ & $\theta$ & $\Omega\otimes 1$ & $b_{\hat{E}}b_{\hat{F}}$ & $a_{\hat{E}}a_{\hat{F}}$ \\ \hline
		$\yzero$ & $\Delta\left(\Omega\right)$ & $a_s$ & $\Omega\otimes 1$ & $-\dyzero$ & $1\otimes\Omega$ & $-(\shq(1))^{2}$ &$a_{\hat{E}}a_{\hat{F}}$ \\ \hline
		$\Delta\left(\Omega\right)$ & $\yone$ & $1\otimes\Omega$ & $b_t$ & $-\dyone$ & $\Omega\otimes 1$ & $ b_{\hat{E}}b_{\hat{F}}$ &$-(\shq(1))^{2}$ \\ \hline
	\end{tabular}
	\caption{Askey--Wilson algebra relations of twisted primitive elements.}\label{tab:tpegenapp}
\end{table}

The commuting part of the proof was already discussed in Section \ref{sec:uq} as well as the first row of Table \ref{tab:tpegenapp}. We will show rows three and four. The others follow by symmetry. For elements $a$, $b$, $c$ in an algebra $A$, define
\[
f_A(a,b,c):=\chq(2)abc-bca-cab,
\]
coming from the $\aw$ relations \eqref{eq:awrelation2} and \eqref{eq:awrelation1}. That is,
\begin{gather*}
	f_{\Uq}(Y_L,Y_K,Y_L)=BY_L + C_0Y_K + D_0,\\
	f_{\Uq}(Y_K,Y_L,Y_K)=BY_K + C_1 Y_L + D_1.
\end{gather*}
Let us start with showing the AW-relation between $\Delta(Y_K)$ and $Y_L\otimes 1$. Explicitly calculating gives
\begin{gather*}
	f_{\Uq\tensor{2}}(\Delta(Y_K),Y_L\otimes 1,\Delta(Y_K)) =f_{\Uq}\big(\hat{K}^2,Y_L,\hat{K}^2\big)\otimes Y_K^2 +f\big(\tilde{Y_K},Y_L,\tilde{Y_K}\big)\otimes 1 \\
\hphantom{f_{\Uq\tensor{2}}(\Delta(Y_K),Y_L\otimes 1,\Delta(Y_K)) =}{}
 + \big[f_{\Uq}\big(\hat{K}^2,Y_L,\tilde{Y_K}\big)+f_{\Uq}\big(\tilde{Y_K},Y_L,\hat{K}^2\big)\big]\otimes Y_K.
\end{gather*}
Using~\eqref{eq:embeddingconstants} with $a_{\hat{E}}=a_{\hat{F}}=0$, we obtain
\begin{gather}
	f_{\Uq\tensor{2}}(\Delta(Y_K),Y_L\otimes 1,\Delta(Y_K)) =(\shq(1))^2b_t \hat{K}^2\otimes Y_K^2 +(\tilde{B}\tilde{Y_K}+C_1 Y_L +\tilde{D})\otimes 1 \nonumber \\
\hphantom{f_{\Uq\tensor{2}}(\Delta(Y_K),Y_L\otimes 1,\Delta(Y_K)) =}{}
 + \big[f_{\Uq}\big(\hat{K}^2,Y_L,\tilde{Y_K}\big)+f_{\Uq}\big(\tilde{Y_K},Y_L,\hat{K}^2\big)\big]\otimes Y_K,\label{eq:DeltaK0eerst}
\end{gather}
where $\tilde{B}=\big(a_{\hat{E}}b_{\hat{F}}+a_{\hat{F}}b_{\hat{E}}\big)\Omega$ and $\tilde{D_1}= \chq(1)a_{\hat{E}}a_{\hat{F}}b_t\Omega$.
We have
\begin{gather*}
	f_{\Uq}\big(\hat{K}^2,Y_L,\tilde{Y_K}\big)+f_{\Uq}\big(\tilde{Y_K},Y_L,\hat{K}^2\big)\\
\qquad{} = f\big(\tilde{Y_K}+\hat{K}^2,Y_L,\tilde{Y_K}+\hat{K}^2\big) -f\big(\tilde{Y_K},Y_L,\tilde{Y_K}\big)-f\big(\hat{K}^2,Y_L,\hat{K}^2\big)\\
	\qquad{}= (\shq(1))^2 b_t \tilde{Y_K}+\tilde{B} \hat{K}^2 +\tilde{D},
\end{gather*}
where $\tilde{D}=-\chq(1)\big(a_{\hat{E}}b_{\hat{F}}+a_{\hat{F}}b_{\hat{E}}\big)$.
Therefore, \eqref{eq:DeltaK0eerst} becomes
\begin{gather*}
f_{\Uq\tensor{2}}(\Delta(Y_K),Y_L\otimes 1,\Delta(Y_K))\\
\qquad{} =\big(\tilde{B}\otimes 1\big)\Delta(Y_K) + C_1 Y_L\otimes 1 + \tilde{D_1}
 + (\shq(1))^2b_t \Delta(Y_K)(1\otimes Y_K) +\tilde{D}(1\otimes Y_K). 
\end{gather*}
For the other relation of $Y_L\otimes 1$ and $\Delta(Y_K)$, we obtain
\begin{gather*}
	f_{\Uq\tensor{2}}(Y_L\otimes 1,\Delta(Y_K),Y_L\otimes 1) = f_{\Uq}(Y_L,\hat{K}^2,Y_L)\otimes Y_K + f_{\Uq}(Y_L,\tilde{Y_K},Y_L)\otimes 1\\
\hphantom{f_{\Uq\tensor{2}}(Y_L\otimes 1,\Delta(Y_K),Y_L\otimes 1)}{}
	= ((\shq(1))^2b_t Y_L+C_0\hat{K}^2+\tilde{D_0})\otimes Y_K \\
\hphantom{f_{\Uq\tensor{2}}(Y_L\otimes 1,\Delta(Y_K),Y_L\otimes 1)=}{}
 + \big(\tilde{B}Y_L+C_0\tilde{Y_K}+\tilde{D}b_t\big)\otimes 1\\
\hphantom{f_{\Uq\tensor{2}}(Y_L\otimes 1,\Delta(Y_K),Y_L\otimes 1)}{}
	= \big(\tilde{B}\otimes 1\big)(Y_L\otimes 1)+ C_0 \Delta(Y_K)+\tilde{D}b_t \\
\hphantom{f_{\Uq\tensor{2}}(Y_L\otimes 1,\Delta(Y_K),Y_L\otimes 1)=}{}
	+(\shq(1))^2b_t(Y_L\otimes 1)(1\otimes Y_K) + (\tilde{D_0}\otimes 1)(1\otimes Y_K),
\end{gather*}
where $\tilde{D_0}=\chq(1)b_{\hat{E}}b_{\hat{F}}\Omega$.

Let us now show the AW-relation between $\Delta(\Omega)$ and $1\otimes Y_K$. Let us first calculate the coproduct of the Casimir of $\Uq$. Using that $\hat{K}$, $\hat{K}^{-1}$ are group-like\footnote{An element $Y\in\Uq$ is called group-like if $\Delta(Y)=Y\otimes Y$.} and ${\hat{E}}$, ${\hat{F}}$ are twisted primitive with respect to $\hat{K}$, we obtain
\begin{gather*}
\frac{\Delta(\Omega)}{(\shq(1))^2}=\frac{q^{-1} \hat{K}^2\otimes \hat{K}^2 +q \hat{K}^{-2}\otimes \hat{K}^{-2}}{(\shq(1))^2} + \big(\hat{K}\otimes {\hat{E}} + {\hat{E}}\otimes \hat{K}^{-1}\big)\big(\hat{K}\otimes {\hat{F}} + {\hat{F}}\otimes \hat{K}^{-1}\big)\nonumber\\
\hphantom{\frac{\Delta(\Omega)}{(\shq(1))^2}}{}
	=\frac{\hat{K}^2\otimes \Omega + \Omega\otimes \hat{K}^{-2}\! - \chq(1) \hat{K}^2\otimes \hat{K}^{-2}}{(\shq(1))^2} + \big(\hat{K}{\hat{F}}\otimes {\hat{E}}\hat{K}^{-1}\! + {\hat{E}}\hat{K}\otimes \hat{K}^{-1}{\hat{F}}\big).
\end{gather*}
Note that all the factors that appear are specific versions of $Y_K$ and $Y_L$. For example, ${\hat{E}}\hat{K}^{-1}$ is~$Y_L$ where $b_{\hat{F}}=b_t=0$ and $b_{\hat{E}}=q^{1/2}$. Therefore, we can use~\eqref{eq:embeddingconstants} to obtain
\begin{gather}
	f_{\Uq^{\otimes2}}(1\otimes Y_K,\Delta(\Omega),1\otimes Y_K)= (\shq(1))^2 \hat{K}^2\otimes Y_K^2\Omega + \Omega \otimes f_{\Uq}\big(Y_K,\hat{K}^{-2},Y_K\big)\nonumber \\
\hphantom{f_{\Uq^{\otimes2}}(1\otimes Y_K,\Delta(\Omega),1\otimes Y_K)=}{}
+ (\shq(1))^2\hat{K}{\hat{F}}\otimes f_{\Uq}\big(Y_K,{\hat{E}}\hat{K}^{-1},Y_K\big)\nonumber	\\
\hphantom{f_{\Uq^{\otimes2}}(1\otimes Y_K,\Delta(\Omega),1\otimes Y_K)=}{}
+(\shq(1))^2{\hat{E}}\hat{K}\otimes f_{\Uq}\big(Y_K,\hat{K}^{-1}{\hat{F}},Y_K\big)\nonumber \\
\hphantom{f_{\Uq^{\otimes2}}(1\otimes Y_K,\Delta(\Omega),1\otimes Y_K)=}{}
- \chq(1) \hat{K}^2\otimes f_{\Uq}\big(Y_K,\hat{K}^{-2},Y_K\big)\label{eq:k0coproductcasimir}.
\end{gather}
Using \eqref{eq:awrelation2} and \eqref{eq:embeddingconstants}, we get
\begin{gather*}
	f_{\Uq}\big(Y_K,\hat{K}^{-2},Y_K\big) = (\shq(1))^2 a_s Y_K + C_1\hat{K}^{-2} + \chq(1)a_{\hat{E}}a_{\hat{F}}\Omega,\\
	f_{\Uq}\big(Y_K,{\hat{E}}\hat{K}^{-1},Y_K\big) = q^{1/2}a_{\hat{F}}\Omega Y_K + C_1 {\hat{E}}\hat{K}^{-1} - q^{1/2}\chq(1)a_{\hat{F}}a_s,\\
	f_{\Uq}\big(Y_K,\hat{K}^{-1}{\hat{F}},Y_K\big) = q^{1/2}a_{\hat{E}}\Omega Y_K + C_1 \hat{K}^{-1}{\hat{F}} - q^{1/2}\chq(1)a_{\hat{E}}a_s.
\end{gather*}
Therefore, \eqref{eq:k0coproductcasimir} becomes
\begin{gather*} f_{\Uq\tensor{2}}(1\otimes Y_K,\Delta(\Omega),1\otimes Y_K)=(\shq(1))^2(1\otimes \Omega)\Delta(Y_K)(1\otimes Y_K) + C_1\Delta(\Omega) \\
\hphantom{f_{\Uq\tensor{2}}(1\otimes Y_K,\Delta(\Omega),1\otimes Y_K)=}{}
		+ \chq(1)a_{\hat{E}}a_{\hat{F}} \Omega\otimes\Omega + (\shq(1))^2a_s(\Omega\otimes 1)(1\otimes Y_K) \\
\hphantom{f_{\Uq\tensor{2}}(1\otimes Y_K,\Delta(\Omega),1\otimes Y_K)=}{}
		- \chq(1)(\shq(1))^2a_s \Delta(Y_K).
	\end{gather*}
Computing $f_{\Uq\tensor{2}}(\Delta(\Omega),1\otimes Y_K,\Delta(\Omega))$ is the most tedious, since we end up with $75$ terms. We will not present the full calculation, as it is just a straightforward computation. One can simplify the calculation by observing that some terms vanish because
\begin{gather*}
	f_A(a,b,c)=0\qquad \text{if }ab=q^2ba \ \text{ and }\ cb=q^2bc,\\
	f_A(a,b,c)+f_A(c,b,a)=0\qquad \text{if }ab=q^2ba, ac=q^{-4}ca \ \text{ and }\ bc=cb.
\end{gather*}

\section{Calculations for Remark \ref{rem:correspondenceaw4aw2}}\label{app:calculationsrelationsAW4}
Let $S\subset \Uq\otimes\Uq$ be the subalgebra generated by
\[Y_L\otimes 1,\quad \Delta(Y_L),\quad 1\otimes Y_K,\quad \Delta(Y_K),\quad \Omega\otimes 1,\quad 1\otimes \Omega,\quad \Delta(\Omega).\]
Then we have the following correspondence between $S$ and $\mathrm{AW}(4)$:
\begin{gather}
		\Lambda_{\{1\}} =b_t, \qquad \Lambda_{\{2\}}=\Omega\otimes 1, \qquad \Lambda_{\{3\}}=1 \otimes \Omega, \qquad \Lambda_{\{4\}}=a_s, \nonumber\\
		\Lambda_{\{1,2\}} =Y_L\otimes 1,\qquad \Lambda_{\{2,3\}}=\Delta(\Omega), \qquad \Lambda_{\{3,4\}}=1\otimes Y_K, \nonumber\\
		\Lambda_{\{1,2,3\}} =\Delta(Y_L), \qquad \Lambda_{\{1,2,3,4\}}=-\frac{a_{\hat{E}}b_{\hat{F}}+a_{\hat{F}}b_{\hat{E}}}{(\shq(1))^2}, \qquad \Lambda_{\{2,3,4\}} =\Delta(Y_K).\label{eq:StoAW4}
	\end{gather}
We can write the relation \cite[equation~(23)]{DeBie2020}, when translating to the Askey--Wilson algebra setting, as
\[
\Lambda_{C}=\alpha[\Lambda_{A},\Lambda_{B}]_q + \beta\big(\Lambda_{A\cap B}\Lambda_{A\cup B}+\Lambda_{A\backslash B}\Lambda_{B\backslash A}\big),
\]
with $\alpha= -(\shq(2))^{-1}$ and $\beta=(\chq(1))^{-1}$. We use this relation to define $\Lambda_{\{1,3\}}$ by
\[
\Lambda_{\{1,3\}}=\alpha[\Lambda_{\{1,2\}},\Lambda_{\{2,3\}}]_q + \beta \left( \Lambda_{\{2\}}\Lambda_{\{1,2,3\}}+\Lambda_{\{1\}}\Lambda_{\{3\}} \right).
\]
Similarly, we define $\Lambda_{\{1,4\}}$ and $\Lambda_{\{1,3,4\}}$ by
\begin{gather*}
	\Lambda_{\{1,4\}}=\alpha \big[\Lambda_{\{1,2,3\}},\Lambda_{\{2,3,4\}} \big]_q + \beta \big( \Lambda_{\{2,3\}}\Lambda_{\{1,2,3,4\}}+\Lambda_{\{1\}}\Lambda_{\{4\}} \big),\\
	\Lambda_{\{1,3,4\}}=\alpha \big[\Lambda_{\{1,2\}},\Lambda_{\{2,3,4\}} \big]_q + \beta \big( \Lambda_{\{2\}}\Lambda_{\{1,2,3,4\}}+\Lambda_{\{1\}}\Lambda_{\{3,4\}} \big).
\end{gather*}
Let us now check that the relation
\begin{gather}
	\Lambda_{\{1,4\}}=\alpha \big[\Lambda_{\{1,3\}},\Lambda_{\{3,4\}} \big]_q + \beta \big( \Lambda_{\{3\}}\Lambda_{\{1,3,4\}}+\Lambda_{\{1\}}\Lambda_{\{4\}} \big)\label{eq:relationtocheck}
\end{gather}
also holds in $S$. We will substitute \eqref{eq:StoAW4} in \eqref{eq:relationtocheck} and focus only on factors of the form $\hat{F}^2\otimes X$, with $X\in \Uq$. Substituting the definition of $\Lambda_{\{1,4\}}$, we obtain for the left-hand side,
\[
\alpha[\Delta(Y_L),\Delta(Y_K)]_q - \beta \frac{(a_{\hat{E}}b_{\hat{F}}+a_{\hat{F}}b_{\hat{E}})\Delta(\Omega)}{(\shq(1))^2} +\beta b_ta_s.
\]
The only term that has a factor $\hat{F}^2\otimes X$ is $\alpha[\Delta(Y_L),\Delta(Y_K)]_q$, which is equal to
\[
-a_{\hat{F}}b_{\hat{F}}\ 	\hat{F}^2\otimes \hat{K}^{-2}.
\]
For the right-hand side of \eqref{eq:relationtocheck}, we substitute the definitions for $\Lambda_{\{1,3\}}$ and $\Lambda_{\{1,3,4\}}$ to obtain
\begin{gather*}
	\alpha \big[\alpha[Y_L\otimes 1,\Delta(\Omega)]_q + \beta (\Omega\otimes 1)\Delta(Y_L)+\beta b_t(1\otimes \Omega),1\otimes Y_K \big]_q \\
	\qquad+\beta (1\otimes\Omega)\left(\alpha[Y_L\otimes 1,\Delta(Y_K)]_q - \beta \frac{(a_{\hat E}b_{\hat F}+ a_{\hat F}b_{\hat E})(\Omega\otimes1)}{(\shq(1))^2}+\beta b_t1\otimes Y_K\right)+ \beta b_ta_s.
\end{gather*}
Here, the only terms that have a factor $\hat{F}^2\otimes X$ is $\alpha^2 [[Y_L\otimes 1,\Delta(\Omega)]_q,1\otimes Y_K]_q$ and\\ $\alpha \beta (1\otimes\Omega)[Y_L\otimes 1,\Delta(Y_K)]_q$. These are, respectively,
\[
\alpha^2 a_{\hat{F}}b_{\hat{F}} \shq(2) (\shq(1))^2 \hat{F}^2\otimes [E,F]_q \qquad \text{and} \qquad \alpha \beta a_{\hat{F}}b_{\hat{F}} \shq(2) \hat{F}^2\otimes \Omega.
\]
It follows from writing out $\Omega$ and using defining relations for $\Uq$ that the factor with $\hat F^2 \otimes X$ on the right-hand side of relation~\eqref{eq:relationtocheck} is equal to the the factor on the left-hand side. In the same way the other terms in~\eqref{eq:relationtocheck} can be checked, showing that this relation is indeed valid in~$S$.

\subsection*{Acknowledgements}
We thank the referees for their valuable suggestions which helped to improve the paper.

\pdfbookmark[1]{References}{ref}
\LastPageEnding

\end{document}